\let\oldtocsection=\tocsection
\let\oldtocsubsection=\tocsubsection 
\let\oldtocsubsubsection=\tocsubsubsection
\renewcommand{\tocsection}[2]{\vspace{0.5em}\hspace{0em}\oldtocsection{#1}{#2}}
\renewcommand{\tocsubsection}[2]{\vspace{0.5em}\hspace{1em}\oldtocsubsection{#1}{#2}}
\renewcommand{\tocsubsubsection}[2]{\vspace{0.5em}\hspace{2em}\oldtocsubsubsection{#1}{#2}}
\newtheorem{theoreme}{Theorem}[section]
\newtheorem{pro}[theoreme]{Proposition}
\newtheorem{lemma}[theoreme]{Lemma}
\newtheorem{rem}[theoreme]{Remark}
\theoremstyle{definition}
\numberwithin{equation}{section}
 \renewenvironment{proof}{{\bfseries \noindent Proof.}}{\demo}
\newcommand\xqed[1]{%
  \leavevmode\unskip\penalty9999 \hbox{}\nobreak\hfill
  \quad\hbox{#1}}
\newcommand\demo{\xqed{$\square$}}
\def\u2{\u^2}
\def\u3{\u^3}
\def\u4{\u^4}
\def\u5{\u^5}
\def\y1{\y^1}
\def\y2{\y^2}
\def\y3{\y^3}
\def\y4{\y^4}
\def\y5{\y^5}
\def\R{\mathbb R}
\def\HH{\mathcal H}
\def\AA{\mathcal A}
\def\la {{\lambda}}
\newcommand {\nc}   {\newcommand}
\nc {\be}   {\begin{equation}} \nc {\ee}   {\end{equation}} \nc
\nc {\eeq}  {\end{eqnarray}} \nc {\beqs}
\nc {\eeqs} {\end{eqnarray*}}
\def\edc{\end{document}}
\providecommand{\abs}[1]{\lvert#1\rvert}
\newcounter{dummy} 
\numberwithin{dummy}{section}
\newtheorem{Theorem}[dummy]{Theorem}
\newtheorem{defi}[dummy]{Definition}
\newtheorem{Remark}[dummy]{Remark}
\numberwithin{equation}{section}
\begin{document}
\title[\fontsize{7}{9}\selectfont  ]{Stability of pizoelectric beam with magnetic effect under (Coleman or Pipkin)-Gurtin thermal law}
\author{Mohammad Akil$^{1}$}
\address{$^1$Universit\'e Polytechnique  Hauts-de-France, C\'ERAMATHS/DEMAV, Le Mont Houy 59313 Valenciennes Cedex 9-France}
\email{mohammad.akil@uphf.fr}

\keywords{Piezoelectric material; Heat; Memory damping; $C_0$-semigroup; Exponential Stability; Polynomial stability.}

\setcounter{equation}{0}

\begin{abstract}
In this paper, we investigate the stabilization of a system of piezoelectric beams under (Coleman or Pipkin )-Gurtin thermal law with magnetic effect. First, we study the Piezoelectric-Coleman-Gurtin system and we obtain an exponential stability result. Next, we consider the Piezoelectric-Gurtin-Pipkin system and we establish a polynomial energy decay rate of type $t^{-1}$ . 

\end{abstract}
\maketitle
\pagenumbering{roman}
\maketitle
\tableofcontents
\pagenumbering{arabic}
\setcounter{page}{1}
\section{Introduction} 
\noindent \subsection{Piezoelectric Beam} It is known, since the 19th century that materials such as quartz, Rochelle salt and barium titanate under pressure produces electric charge\slash voltage, this phenomenon is called the direct piezoelectric effect and was discovered by brothers Pierre and Jacques Curie in 1880. This same materials, when subjected to an electric field, produce proportional geometric tension. Such a  phenomenon is known as the converse piezoelectric effect and was discovered by Gabriel Lippmann in 1881.\\
Morris and Ozer, proposed a piezoelectric beam model with a magnetic effect, based on the Euler-Bernoulli and Rayleigh  beam theory for small displacement (the same equations for the model are obtained if Midlin-Timoshenko small displacement assumptions bare used ), they considered an elastic beam covered by a piezoelectric material on its upper and lower surfaces, isolated by the edges and connected to a external electrical circuit to feed charge to the electrodes. As the voltage is prescribed at the electrodes, the following Lagrangian was considered 
\begin{equation}\label{Lagrangian}
	\mathcal{L}=\int_0^T\left[\bold{K-(P+E)+B+W}\right]dt,
\end{equation}
where $\bold{K}$, $\bold{P+E}$, $\bold{B}$ and $\bold{W}$ represent the (mechanical) kinetic energy, total stored energy, magnetic energy (electrical kinetic) of the beam and the work done by external forces, respectively. For a beam length $L$ to thickness $h$ and considering $v=v(x,t)$, $w=w(x,t)$ and $p=p(x,t)$ as functions that represent the longitudinal displacement of the center line, transverse displacement of the beam and the total load of the electric displacement along the transverse direction at each point $x$, respectively. So, one can assume that 
\begin{equation}\label{newL}
	\begin{array}{cc}
	\displaystyle
	\bold{P+E}=\frac{h}{2}\int_0^L\left[\alpha\left(v_x^2+\frac{h^2}{12}w_{xx}^2-2\gamma\beta v_xp_x+\beta p_x^2\right)\right]dx, & \displaystyle
	\bold{B}=\frac{\mu h}{2}\int_0^Lp_t^2dx,\\[0.1in]
	\displaystyle
	\bold{K}=\frac{\rho h}{2}\int_0^L\left[v_t^2+\left(\frac{h^2}{12}+1\right)\omega_t^2\right],& \displaystyle 
	\bold{W}=-\int_0^Lp_xV(t)dx,
	\end{array}
\end{equation}
where $V(t)$ is the voltage applied at the electrode. From Hamilton's principle for admissible displacement variations displacement variations $\left\{v,w,p\right\}$ of $L$ the zero and observing that the only external force acting on the beam is the voltage at the electrodes (the bending equation is decoupled) see \cite{Morris-Ozer2013,Morris-Ozer2014}, they got the system 
\begin{equation}\label{piezo}
\begin{array}{c}
\rho v_{tt}-\alpha v_{xx}+\gamma \beta p_{xx}=0,\\
\mu p_{tt}-\beta p_{xx}+\gamma \beta v_{xx}=0,
\end{array}	
\end{equation}
where $\rho, \alpha, \gamma, \mu$ and $\beta$ denote the mass density, elastic stiffness, piezoelectric coefficient, magnetic permeability, water resistance coefficient of the beam and the prescribed voltage on electrodes of beam respectively, and in addition, the relationship 
\begin{equation}\label{alpha1}
\alpha=\alpha_1+\gamma^2\beta.	
\end{equation}
They assumed that the beam is fixed at $x=0$ and free at $x=L$, and thus they got (from modelling) the following boundary conditions  
\begin{equation}\label{bc}
\begin{array}{c}
v(0,t)=\alpha v_x(L,t)-\gamma \beta p_x(L,t)=0,\\
p(0,t)=\beta p_x(L,t)-\gamma \beta v_x(L,t)=- \displaystyle{\frac{V(t)}{h}}.	
\end{array}	
\end{equation}
Then, the authors considered $V(t)=kp_t(L,t)$ (electrical feedback controller) in \eqref{bc} and established strong stabilization for almost all system parameters and exponential stability for system parameters in a null measure set. In \cite{Ramos2018} Ramos et al. inserted a dissipative term $\delta v_t$ in the first equation of \eqref{piezo} , where $\alpha>0$ is a constant and considered the following boundary condition 
\begin{equation}\label{Ramos-bc}
\begin{array}{c}
v(0,t)=\alpha v_x(L,t)-\gamma \beta p_x(L,t)=0,\\
p(0,t)=\beta p_x(L,t)-\gamma \beta v_x(L,t)=0.	
\end{array}	
\end{equation}
The authors showed, by using energy method, that the system's energy decays exponentially. This means that the friction term and the magnetic effect work together in order to uniformly stabilize the system. In \cite{Abdelaziz1}, the authors considered a one-dimensional dissipative system of piezoelectric beams with magnetic effect and localized damping. They proved that the system is exponential stable using a damping mechanism acting only on one component and on a small part of the beam. In \cite{Abdelaziz2}, the authors considered a one-dimensional piezoelectric beams with magnetic effect damped with a weakly nonlinear feedback in the presence of a nonlinear delay term.They established an energy decay rate under appropriate assumptions on the weight of the delay. In \cite{AnLiuKong}, the authors studied the stability of a piezoelectric beams with magnetic effects of fractional derivative type and with/ without thermal effects of fourrier's law, they obtained an exponential stability by taking two boundary fractional dampings and additional thermal effect. 

\subsection{(Coleman or Pipkin)-Gurtin thermal law} 
The theory of heat conduction under various non-Fourier heat flux laws has been developed since the 1940s. Let ${\bold{q}}$ be the heat flux vector. According to the Gurtin-Pipkin theory \cite{Gurtin1968}, the linearized constitutive equation of $q$ is   
\begin{equation}\label{GP}
\bold{q}(t)=-\int_0^{\infty}g(s)\theta_x(t-s)ds,	
\end{equation}
where $g$ is the heat conductivity relaxation kernel. The presence of convolution term
in \eqref{GP} entails finite propagation speed of heat conduction, and consequently, the equation is of hyperbolic type. Note that \eqref{GP} reduces to the classical Fourier law when $g$ is the Dirac mass at zero. Furthermore, if we take $g$ as a prototype kernel 
\begin{equation}\label{gGP}
g(t)=e^{-kt},\quad k>0,	
\end{equation}
and differentiate \eqref{GP} with respect to t, we can (formally) arrive at the so-called Cattaneo-Fourier law.. 
\begin{equation}\label{CF}
\bold{q}_t(t)+k\bold{q}=-\theta_x(t).	
\end{equation}
On the other hand, when the heat conduction is due to the Coleman-Gurtin theory \cite{Coleman1967}, the heat flux $\bold{q}$ depends on both the past history and the instantaneous of the gradient of temperature: 
\begin{equation}\label{H-CG}
\bold{q}(t)=-\beta\theta_x(x,t)-\int_0^{\infty}g(s)\theta_x(x,t-s)ds, 	
\end{equation}
where $\beta>0$ is the instantaneous diffusivity coefficient. The analysis on stabilization and controllability of the heat conduction equations under non-Fourier heat flux laws can be found in \cite{article1,article2,article3,article4} and references therein.\\
In \cite{Zhang2014-heat}, Q. Zhang studied the stability of an interaction system comprised of a wave equation and a heat equation with memory. An exponential stability of the interaction system is obtained when the hereditary heat conduction is of Gurtin-Pipkin type and she showed the lack of uniform decay of the interaction system when the heat conduction law is of Coleman-Gurtin type. Later, in \cite{Deloro-Lassi}, the authors studied the asymptotic behaviour of solutions of a one-dimensional coupled wave-heat system with Coleman-Gurtin thermal law. They proved an optimal polynomial decay rate of type $t^{-2}$. In \cite{DELLORO2021148}, the author studied the stability of Bresse and Timoshenko systems with hyperbolic heat conduction. First, he studied the Bresse-Gurtin-Pipkin system, providing a necessary and sufficient condition for the exponential stability and the optimal polynomial decay rate when the condition  is violated, also he studied the Timoshenko-Gurtin-Pipkin system and he find the optimal polynomial decay rate.

\subsection{Description of the model} Based on the description mentioned above piezoelectric beam and heat law, we design and propose to study the stability of the following system 
\begin{equation}\label{Pizo}\tag{${\rm P_{CG}}$}
\left\{\begin{array}{l}
\rho u_{tt}-\alpha u_{xx}+\gamma \beta y_{xx}+\delta w_x=0,\quad (x,t)\in (0,L)\times (0,\infty),\\
\mu y_{tt}-\beta y_{xx}+\gamma \beta u_{xx}=0,\quad (x,t)\in (0,L)\times (0,\infty),\\
\displaystyle 
w_t-c(1-m)w_{xx}-c m\int_{0}^{\infty}g(s)w_{xx}(x,t-s)ds+\delta u_{xt}=0,\\
u(0,t)=y(0,t)=w(0,t)=w(L,t)=0,\\
\alpha u_x(L,t)-\gamma \beta y_x(L,t)=\beta y_x(L,t)-\gamma \beta u_x(L,t)=0.
\end{array}
\right.
\end{equation}
The convolution kernel $g:[0,\infty[\rightarrow [0,\infty[$ is a convex integrable function (thus non-increasing and vanishing at infinity) of unit total mass, taking the explicit form 
$$
g(s)=\int_s^{\infty}\sigma(r)dr,\quad s\geq 0,
$$
where $\sigma:(0,\infty)\to [0,\infty)$, called memory kernel, satisfying the following conditions
\begin{equation}\label{CONDITION-H}\tag{${\rm H}$}
\left\{\begin{array}{l}
\sigma \in L^1((0,\infty))\cap C^1((0,\infty))\quad \text{with}\quad \displaystyle{\int_0^{\infty}\sigma(r)dr=g(0)>0},\ \sigma(0)=\lim_{s\to 0}\sigma(s)<\infty,\\[0.1in]
\sigma\ \text{satisfies the Dafermos condition}\ \sigma'(s)\leq -d_\sigma \sigma(s).
\end{array}
\right.
\end{equation}	
\noindent Finally, we impose the initial conditions of the form 
\begin{equation}\label{IC-Pizo}
\left\{\begin{array}{ll}
\left(u(x,0),u_t(x,0),y(x,0),y_t(x,0)\right)=(u_0(x),u_1(x),y_0(x),y_1(x)),& x\in (0,L),\\
\left(w(x,0),w(x,-s)\right)=(w_0(x),\phi_0(x,s)),& x\in (0,L),\ s>0,
\end{array}
\right.
\end{equation}
where $u_0,u_1,y_0,y_1$ are assigned data and $c>0$. In particular, $\phi_0$ accounts for the so called initial past history of $w$.  In the model \eqref{Pizo}, $m\in [0,1]$ is a fixed parameter and the temperatures obey the parabolic hyperbolic law introduced by B.D Coleman and M.E. Gurtin in \cite{Coleman1967}. The limit cases:\\
$\bullet$ $m=0$ corresponds to the Pizoelectric-Fourrier law defined by: 
\begin{equation}\label{Pizo-F}\tag{${\rm P_F}$}
\left\{\begin{array}{l}
\rho u_{tt}-\alpha u_{xx}+\gamma \beta y_{xx}+\delta w_x=0,\quad (x,t)\in (0,L)\times (0,\infty),\\
\mu y_{tt}-\beta y_{xx}+\gamma \beta u_{xx}=0,\quad (x,t)\in (0,L)\times (0,\infty),\\
\displaystyle 
w_t-cw_{xx}+\delta u_{xt}=0,\\
u(0,t)=y(0,t)=w(0,t)=w(L,t)=0,\\
\alpha u_x(L,t)-\gamma \beta y_x(L,t)=\beta y_x(L,t)-\gamma \beta u_x(L,t)=0.
\end{array}
\right.
\end{equation}
$\bullet$ $m=1$ corresponds to the Pizoelectric-Gurtin-Pikin  law defined by: 
\begin{equation}\label{Pizo-GP}\tag{${\rm P_{GP}}$}
\left\{\begin{array}{l}
\rho u_{tt}-\alpha u_{xx}+\gamma \beta y_{xx}+\delta w_x=0,\quad (x,t)\in (0,L)\times (0,\infty),\\
\mu y_{tt}-\beta y_{xx}+\gamma \beta u_{xx}=0,\quad (x,t)\in (0,L)\times (0,\infty),\\
\displaystyle 
w_t-c\int_{0}^{\infty}g(s)w_{xx}(x,t-s)ds+\delta u_{xt}=0,\\
u(0,t)=y(0,t)=w(0,t)=w(L,t)=0,\\
\alpha u_x(L,t)-\gamma \beta y_x(L,t)=\beta y_x(L,t)-\gamma \beta u_x(L,t)=0.
\end{array}
\right.
\end{equation}
It is important to note that since $y(x,t)=\int_0^xD(\zeta,t)d\zeta$, where $D(\zeta,t)$ represents the electric displacement in the direction $z$, then $y(0,t)=0$ and still $y(L,t)$ may not be zero, because the boundary condition $p(L,t)=0$ does not represent the fixation of the beam on both sides. In fact, the fixation is due to the boundary condition $u(0,t)=u(L,t)=0$, where $u$ is the tranverse displacement of the beam. \\
This paper is organised as follow: In the first part we study the well-posedness of system \eqref{Pizo}. Next, we prove that the piezoelectric system with Coleman-Gurtin law is exponentially stable. In the last part, we consider the system of piezoelectric beam under Gurtin-Pipkin thermal law and we establish polynomial stability of type $t^{-1}$. 
\section{Well-Posedness}
\noindent We start by introducing some notations and spaces used in this paper. First, we define
$$
H_L^1(0,L)=\left\{u\in H^1(0,L);\quad u(0)=0\right\}.
$$ 
It is easy to check that the space $H_L^1(0,L)$ is a (complex) Hilbert space over $\mathbb{C}$ equipped with  the inner product 
$$
\left<u,u^1\right>_{H_L^1(0,L)}=\left<u_x,u^1_x\right>_{L^2(0,L)}.
$$
We also introduce the \textit{memory space}  $W$, defined by 
$$
W=L^2_{\sigma}(\mathbb{R}^+;H_0^1(0,L)),
$$ 
of $H_0^1(0,L)-$valued functions on $(0,\infty)$ which are square integrable with respect to the measure $\sigma(s)ds$, endowed with the inner product 
$$
(\eta_1,\eta_2)_W:=cm\int_0^L\int_0^{\infty}\sigma(s)\eta_{1x}\overline{\eta_{2x}}dsdx,\quad \forall \eta_1,\eta_2\in W. 
$$
\noindent We reformulate the \eqref{Pizo} using the history framework of Dafermos \cite{Dafermos1970}. To this end, for $s>0$, we consider the auxiliary function 
$$
\eta (x,s)=\int_0^sw(x,t-r)dr,\quad x\in (0,L),\ s>0 
$$ 
and we rewrite \eqref{Pizo} in the form 
\begin{equation}\label{Pizo1}\tag{${\rm P_{CG1}}$}
\left\{\begin{array}{l}
\rho u_{tt}-\alpha u_{xx}+\gamma \beta y_{xx}+\delta w_x=0,\quad (x,t)\in (0,L)\times (0,\infty),\\
\mu y_{tt}-\beta y_{xx}+\gamma \beta u_{xx}=0,\quad (x,t)\in (0,L)\times (0,\infty),\\
\displaystyle 
w_t-c(1-m)w_{xx}-c m\int_{0}^{\infty}\sigma(s)\eta_{xx}(s)ds+\delta u_{xt}=0,\\
\eta_t+\eta_s-w=
0,\\
u(0,t)=y(0,t)=w(0,t)=w(L,t)=0,\\
\alpha u_x(L,t)-\gamma \beta y_x(L,t)=\beta y_x(L,t)-\gamma \beta u_x(L,t)=0,\quad (x,t)\in (0,L)\times (0,\infty)\times (0,\infty),
\end{array}
\right.
\end{equation}
with the initial conditions \eqref{IC-Pizo}. The energy of system \eqref{Pizo1} is given by 
\begin{equation}\label{Energy-Pizo}
E_m(t)=E_{m,1}(t)+E_{m,2}(t)+E_{m,3}(t).	
\end{equation}
where 
$$
\begin{array}{c}
\displaystyle 
E_{m,1}(t)=\frac{1}{2}\int_0^L\left(\rho \abs{u_t}^2+\alpha_1\abs{u_x}^2\right)dx,\ E_{m,2}(t)=\frac{1}{2}\int_0^L\left(\mu\abs{y_t}^2+\beta\abs{\gamma u_x-y_x}^2\right)dx\\ 
\text{and}\displaystyle \quad E_{m,3}(t)=\frac{1}{2}\int_0^L\abs{\omega}^2dx+\frac{cm}{2}\int_0^{L}\int_0^\infty\sigma(s)\abs{\eta_x}^2dsdx.
\end{array}
$$
\begin{lemma}\label{Energy}
Let $U=(u,u_t,y,y_t,w,\eta)$ be a regular solution of system \eqref{Pizo1}. Then, the energy $E_m(t)$ satisfies the following estimation 
\begin{equation}\label{dEnergy-m}
\frac{d}{dt}E_m(t)=c(m-1)\int_0^L\abs{\omega_x}^2dx+cm\int_0^{+\infty}\int_0^L	\sigma'(s)\abs{\eta_x}^2dxds. 
\end{equation}
\end{lemma}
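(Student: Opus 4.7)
The plan is to differentiate $E_m(t)$ termwise by multiplying each field equation of \eqref{Pizo1} by the complex conjugate of an appropriate quantity, taking the real part, and integrating by parts in $x$ (and in $s$ for the memory term), using all boundary conditions to dispose of surface terms and the cross-coupling of the four equations.

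First, I would multiply the $u$-equation by $\bar u_t$, integrate over $(0,L)$, and take real parts. The boundary term at $x=L$ produced by integrating $-\alpha u_{xx}$ by parts is $-\alpha\,\mathrm{Re}(u_x(L,t)\bar u_t(L,t))$, and the one produced from $\gamma\beta y_{xx}$ is $+\gamma\beta\,\mathrm{Re}(y_x(L,t)\bar u_t(L,t))$; these are equal by the boundary condition $\alpha u_x(L,t)=\gamma\beta y_x(L,t)$ and thus cancel. The remaining pieces give
\[
\frac{d}{dt}\Big(\tfrac{\rho}{2}\!\int_0^L\!|u_t|^2+\tfrac{\alpha}{2}\!\int_0^L\!|u_x|^2\Big)=\gamma\beta\,\mathrm{Re}\!\int_0^L y_x\bar u_{xt}\,dx-\delta\,\mathrm{Re}\!\int_0^L w_x\bar u_t\,dx.
\]
I would do the analogous computation with the $y$-equation multiplied by $\bar y_t$; the boundary pair cancels via $\beta y_x(L,t)=\gamma\beta u_x(L,t)$, leaving a symmetric cross-coupling $\gamma\beta\,\mathrm{Re}\int u_x\bar y_{xt}\,dx$. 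Adding the two identities, the mixed term becomes $\gamma\beta\,\frac{d}{dt}\mathrm{Re}\int u_x\bar y_x\,dx$, which together with the $\frac{\alpha}{2}|u_x|^2+\frac{\beta}{2}|y_x|^2$ terms and the algebraic identity $\alpha=\alpha_1+\gamma^2\beta$ rearranges exactly to $\frac{\alpha_1}{2}|u_x|^2+\frac{\beta}{2}|\gamma u_x-y_x|^2$. Thus
\[
\frac{d}{dt}\bigl(E_{m,1}+E_{m,2}\bigr)=-\delta\,\mathrm{Re}\!\int_0^L w_x\bar u_t\,dx.
\]

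Next I would multiply the $w$-equation by $\bar w$ and integrate. Using $w(0,t)=w(L,t)=0$ and that $\eta(\cdot,s)\in H_0^1(0,L)$ for each $s$, integration by parts yields $-c(1-m)\int|w_x|^2$ from the instantaneous diffusion, $-cm\,\mathrm{Re}\int\!\!\int\sigma(s)\eta_x\bar w_x\,ds\,dx$ from the memory term, and $+\delta\,\mathrm{Re}\int w_x\bar u_t\,dx$ from the coupling (after one more integration by parts and the identity $\mathrm{Re}(u_t\bar w_x)=\mathrm{Re}(w_x\bar u_t)$); the latter kills the coupling term in the $E_{m,1}+E_{m,2}$ identity. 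Finally I would handle $\frac{cm}{2}\frac{d}{dt}\int\!\!\int\sigma(s)|\eta_x|^2\,ds\,dx$ by inserting $\eta_{xt}=w_x-\eta_{sx}$ from the history equation: the $w_x$ piece contributes $+cm\,\mathrm{Re}\int\!\!\int\sigma\eta_x\bar w_x\,ds\,dx$, which cancels the memory cross term from the $w$-equation step, and the $-\eta_{sx}$ piece yields, after integration by parts in $s$ with $\eta_x(\cdot,0)=0$ and $\sigma(s)|\eta_x|^2\to 0$ as $s\to\infty$, the dissipative quantity $\int\!\!\int \sigma'(s)|\eta_x|^2\,ds\,dx$ with the appropriate constant. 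Adding all identities collapses every coupling and leaves precisely \eqref{dEnergy-m}.

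The main obstacle is purely bookkeeping: ensuring that each of the three coupling pairs (the $\gamma\beta$ boundary pair at $x=L$, the $\delta$ pair between the first and third equations, and the $cm\sigma$ pair between the third and fourth equations) cancels cleanly. The genuinely non-trivial manipulation is the $s$-integration by parts for the memory term, which is justified by the hypotheses in \eqref{CONDITION-H} (so that $\sigma$ is integrable and decays, and $\eta_x(\cdot,s)$ vanishes at $s=0$ by the Dafermos reformulation).
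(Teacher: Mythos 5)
Your proposal is correct and follows essentially the same route as the paper: multiply the $u$- and $y$-equations by $\overline{u_t}$ and $\overline{y_t}$, combine the cross terms via $\alpha=\alpha_1+\gamma^2\beta$ into $\frac{\alpha_1}{2}|u_x|^2+\frac{\beta}{2}|\gamma u_x-y_x|^2$, multiply the $w$-equation by $\overline{w}$, and treat the memory term by differentiating the history equation in $x$, testing against $cm\sigma(s)\overline{\eta_x}$, and integrating by parts in $s$. Your explicit tracking of the boundary-term cancellations at $x=L$ and of the justification for the $s$-integration by parts is slightly more careful than the paper's write-up, but the argument is the same.
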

\begin{proof}
Multiplying the first and the second  equation of \eqref{Pizo1} by $\overline{u_t}$ and $\overline{y_t}$ respectively, integrating by parts over $(0,L)$, we get 
\begin{equation}\label{dene1}
\frac{1}{2}\frac{d}{dt}\left(\rho\int_0^L\abs{u_t}^2dx+\alpha\int_0^L \abs{u_x}^2dx\right)-\gamma \beta \Re\left(\int_0^Ly_x\overline{u_{xt}}\right)+\delta \Re\left(\int_0^L\omega_x\overline{u_t}dx\right)=0
\end{equation}
and
\begin{equation}\label{dene2}
\frac{1}{2}\frac{d}{dt}\left(\mu\int_0^L\abs{y_t}^2dx+\beta\int_0^L \abs{y_x}^2dx\right)-\gamma \beta \Re\left(\int_0^Lu_x\overline{y_{xt}}\right)=0.
\end{equation}
Adding \eqref{dene1} and \eqref{dene2}, and using the fact that $\alpha=\alpha_1+\gamma^2\beta$, we get 
\begin{equation}\label{dene3}
\frac{1}{2}\frac{d}{dt}\left(\int_0^L(\rho\abs{u_t}^2+\alpha_1\abs{u_x}^2+\mu\abs{y_t}^2+\beta \abs{\gamma u_x-y_x}^2)dx\right)+\delta \int_0^L\omega_x\overline{u_t}dx=0.
\end{equation}
Now, multiplying the third equation of \eqref{Pizo1} by $\overline{w}$, integrating by parts over $(0,L)$, we get 
\begin{equation}\label{dene3}
\frac{1}{2}\frac{d}{dt}\int_0^L\abs{\omega}^2dx+c(1-m)\int_0^L\abs{\omega_x}^2dx+\Re\left(cm\int_0^L\int_0^{\infty}\sigma(s)\eta_x(s)\overline{\omega}_xdsdx\right)-\delta\int_0^Lu_t\overline{w}_xdx=0. 
\end{equation}
Differentiating the fourth equation with respect to $x$, we obtain 
\begin{equation}\label{dene4}
\eta_{xt}+\eta_{xs}-w_x=0. 	
\end{equation}
Multiplying \eqref{dene4} by $cm\sigma(s)\overline{\eta}_x$, integrating over $(0,L)\times (0,\infty)$, we get 
\begin{equation}\label{dene5}
\frac{1}{2}\frac{d}{dt}cm\int_0^{L}\int_0^{\infty}\sigma(s)\abs{\eta_x}^2dsdx-\frac{cm}{2}\int_0^{L}\int_0^{\infty}\sigma'(s)\abs{\eta_x}^2dsdx=\Re\left(cm\int_0^L\int_0^{\infty}\sigma(s)\eta_x\overline{w}_xdsdx\right)	
\end{equation}
Inserting \eqref{dene5} in \eqref{dene4}, we get 
\begin{equation}\label{dene6}
\frac{d}{dt}E_{m,3}(t)-\delta\int_0^Lu_t\overline{w}_xdx=c(m-1)\int_0^L\abs{\omega_x}^2dx+\frac{cm}{2}\int_0^{L}\int_0^\infty\sigma'(s)\abs{\eta_x}^2dsdx.
\end{equation}
\noindent Finally, adding \eqref{dene3} and \eqref{dene6}, we get 
the desired result. The proof has been completed. 
\end{proof}

\noindent Now, we define the The Hilbert space $\mathcal{H}$ (Energy space) by 
$$
\mathcal{H}:=\left(H_L^1(0,L)\times L^2(0,L)\right)^2\times L^2(0,L)\times W,  
$$
 equipped with the following inner product 
$$
\left<U_1,U_2\right>_{\mathcal{H}}=\int_0^L\left[\rho\,v_1\overline{v_2}+\alpha_1u
_{1,x}\overline{u_{2,x}}+\mu\,z_1\overline{z_2}+\beta\left(\gamma u_{1,x}-y_{1,x}\right)\overline{\left(\gamma u_{1,x}-y_{1,x}\right)}+w_1\overline{w_2}\right]dx+(\eta_1,\eta_2)_W,
$$
where $U_i=(u_i,v_i,y_i,z_i,w_i)\in \mathcal{H}$, $i=1,2$.
\begin{rem}
By using the fact that $\alpha=\alpha_1+\gamma^2\beta$, the boundary conditions  at $L$ should be replaced by the Neumann conditions $u_x(L)=y_x(L)=0$.	
\end{rem}
\noindent By introducing the state $U=(u,v,y,z,\omega,\eta(\cdot,s))^{\top}$, system \eqref{Pizo1} can be written as the following first order evolution equation 
\begin{equation}\label{evolution}
	U_t={\mathcal{A}}_mU,\quad U(0)=U_0,
\end{equation}
where ${\mathcal{A}}_m:D({\mathcal{A}}_m)\subset \mathcal{H}\longrightarrow \mathcal{H}$ is an unbounded linear operator defined  by 
$$
\mathcal{A}_{m}\begin{pmatrix}
u\\ v\\ y\\ z\\ \omega\\ \eta	
\end{pmatrix}
=\begin{pmatrix}
v\\ \displaystyle \frac{1}{\rho}\left(\alpha u_{xx}-\gamma \beta y_{xx}-\delta \omega_x\right)\\ z\\ \displaystyle \frac{1}{\mu}\left(\beta y_{xx}-\gamma\beta u_{xx}\right)\\  \displaystyle{c\Lambda^m_{xx}-\delta v_x}\\
-\eta_s+\omega 
\end{pmatrix}
$$ 
and 
$$
D({\mathcal{A}}_m)=\left\{
\begin{array}{l}
U:=(u,v,y,z,w,\eta)\in \mathcal{H};\ v,z\in H_L^1(0,L);\ u,y\in H^2(0,L)\cap H_L^1(0,L),\ w\in H_0^1(0,L),\\
\displaystyle 
\Lambda^m_x\in H^1(0,L),\ \eta_s\in W,\ \eta(\cdot,0)=0\quad \text{and}\quad u_x(L)=y_x(L)=0.
\end{array}
\right\}
$$
where $\displaystyle{\Lambda^m=(1-m)\omega+m\int_0^{\infty}\sigma(s)\eta(s)ds}$ and $U_0=(u_0,u_1,y_0,y_1,\omega_0,\eta_0)^{\top}\in \mathcal{H}$ with $\eta_0=\displaystyle{\int_0^s\phi_0(x,r)dr}$ for $x\in (0,L)$ and $s>0$.
\begin{pro}\label{m-dissipative}
Under the hypothesis \eqref{CONDITION-H}, the unbounded linear operator ${\mathcal{A}}_m$ is m-dissipative in the energy space $\mathcal{H}$.  	
\end{pro}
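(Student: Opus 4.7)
The plan is to verify the two conditions of the Lumer--Phillips theorem: dissipativity of $\mathcal{A}_m$ and surjectivity of $I-\mathcal{A}_m$. For dissipativity, one computes $\langle\mathcal{A}_m U,U\rangle_{\mathcal{H}}$ slot by slot exactly as in the proof of Lemma~\ref{Energy}; the boundary contributions vanish thanks to $v(0)=z(0)=0$ (built into $H_L^1$), $w(0)=w(L)=0$, $\eta(\cdot,0)=0$, and the Neumann conditions $u_x(L)=y_x(L)=0$ encoded in $D(\mathcal{A}_m)$ (equivalent to the physical conditions at $L$ via $\alpha=\alpha_1+\gamma^2\beta$). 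The upshot is the identity
$$\Re\langle\mathcal{A}_m U,U\rangle_{\mathcal{H}}=c(m-1)\int_0^L\abs{\omega_x}^2\,dx+\frac{cm}{2}\int_0^L\!\!\int_0^\infty\sigma'(s)\abs{\eta_x}^2\,ds\,dx,$$
which is non-positive because $m\in[0,1]$ and $\sigma'(s)\le -d_\sigma\sigma(s)\le 0$ by~\eqref{CONDITION-H}.

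For the range condition, given $F=(f_1,\dots,f_6)\in\mathcal{H}$ I would solve $(I-\mathcal{A}_m)U=F$ by first eliminating the first-order components: $v=u-f_1$ and $z=y-f_3$ from the kinematic equations, while the memory ODE $\eta_s+\eta=\omega+f_6$ with $\eta(\cdot,0)=0$ is solved explicitly as $\eta(x,s)=\int_0^s e^{-(s-r)}(\omega(x)+f_6(x,r))\,dr$. Inserting this back gives $\Lambda^m=c_m\omega+mF_6$ with $c_m:=1-m+m\!\int_0^\infty\sigma(s)(1-e^{-s})\,ds>0$, and the problem reduces to a coupled elliptic system for $(u,y,\omega)\in V:=H_L^1(0,L)\times H_L^1(0,L)\times H_0^1(0,L)$. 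The sesquilinear form obtained by weak testing is continuous, and its real part on the diagonal equals
$$\int_0^L\!\bigl(\rho\abs{u}^2+\alpha_1\abs{u_x}^2+\mu\abs{y}^2+\beta\abs{\gamma u_x-y_x}^2+\abs{\omega}^2+c\,c_m\abs{\omega_x}^2\bigr)dx,$$
since the $\delta$-coupling $-\delta\omega\overline{u_x}+\delta u_x\overline{\omega}$ is purely imaginary. This is coercive on $V$ by Poincar\'e's inequality, so Lax--Milgram yields a unique weak solution.

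It remains to promote this to $U\in D(\mathcal{A}_m)$. Testing first against $H_0^1$-functions, the interior equations give $\alpha u_{xx}-\gamma\beta y_{xx}$ and $-\beta y_{xx}+\gamma\beta u_{xx}$ in $L^2$; inverting the $2\times 2$ matrix $\bigl(\begin{smallmatrix}\alpha&-\gamma\beta\\ \gamma\beta&-\beta\end{smallmatrix}\bigr)$ of determinant $-\alpha_1\beta\ne 0$ recovers $u,y\in H^2(0,L)$, and the $\omega$-equation likewise gives $\omega\in H^2(0,L)$. Testing with fields not vanishing at $L$ produces the natural conditions $\alpha u_x(L)-\gamma\beta y_x(L)=0=\beta y_x(L)-\gamma\beta u_x(L)$, which, using $\alpha=\alpha_1+\gamma^2\beta$, reduce to $u_x(L)=y_x(L)=0$. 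For the memory sector, the explicit representation of $\eta$ combined with $f_6\in W$ and the Dafermos condition yields $\eta,\eta_s\in W$. The main technical point is verifying that the data $f_6$ generate a right-hand side of the $\omega$-equation in the dual of $V$, and a posteriori that $\Lambda^m_x\in H^1$: both hinge on commuting differentiation with the $\sigma$-weighted convolution and on the exponential upper bound $\sigma(s)\le\sigma(0)e^{-d_\sigma s}$ implied by~\eqref{CONDITION-H}, which makes the Hilbert--Schmidt-type estimates close.
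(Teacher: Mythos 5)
Your dissipativity computation coincides with the paper's (it is exactly the identity of Lemma \ref{Energy} restated for the stationary problem), but your treatment of the range condition takes a genuinely different route. The paper solves $-\mathcal{A}_mU=F$, i.e.\ proves $0\in\rho(\mathcal{A}_m)$: at $\lambda=0$ the system decouples sequentially --- $v=-f^1$ and $z=-f^3$ are read off, the heat equation then determines $\Lambda^m$ by two explicit integrations (using $\Lambda^m(0)=\Lambda^m(L)=0$), $w$ is recovered algebraically from the representation $\eta(\cdot,s)=sw+\int_0^sf^6\,d\tau$, and only the remaining $(u,y)$-block is handled by Lax--Milgram on $H_L^1\times H_L^1$; surjectivity of $\lambda I-\mathcal{A}_m$ for small $\lambda>0$ is then obtained from the openness of the resolvent set. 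You instead solve $(I-\mathcal{A}_m)U=F$ directly, which forces you to keep $(u,y,\omega)$ coupled in a single sesquilinear form on $H_L^1\times H_L^1\times H_0^1$; the key observation that makes this work --- that the $\delta$-coupling contributes $\delta\bigl(\int_0^Lu_x\overline{\omega}\,dx-\int_0^L\omega\overline{u_x}\,dx\bigr)$ on the diagonal, which is purely imaginary after integrating by parts with $\omega\in H_0^1$ --- is correct, and your constant $c_m=1-m+m\int_0^\infty\sigma(s)(1-e^{-s})\,ds$ is indeed positive for all $m\in[0,1]$. Your route is the more standard Lumer--Phillips argument and dispenses with the open-resolvent-set step, at the price of a larger variational space and of the convolution estimates needed to show that the $f^6$-contribution $F_6$ defines a bounded functional (your appeal to $\sigma(s)\le\sigma(r)e^{-d_\sigma(s-r)}$ from the Dafermos condition is the right tool there); the paper's route buys a fully explicit, decoupled resolution but needs the extra functional-analytic step at the end. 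One small inaccuracy: you claim the $\omega$-equation gives $\omega\in H^2(0,L)$, but since $\Lambda^m=c_m\omega+mF_6$ this would require $F_6\in H^2$ as well, which is not guaranteed; what is actually needed for $U\in D(\mathcal{A}_m)$ is only $\Lambda^m_x\in H^1(0,L)$, and that follows at once from the strong form $c\Lambda^m_{xx}=\omega+\delta u_x-f^5-\delta f^1_x\in L^2(0,L)$, so the proof is unaffected.
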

\begin{proof}
For all $U=(u,v,y,z,w,\eta(\cdot,s))^{\top}\in D({\mathcal{A}}_m)$, by using conditions $(H1)$,  $(H2)$ and the fact that $m\in (0,1)$, it's easy to see that 
\begin{equation}\label{dissipative}
\Re\left(\left<{\mathcal{A}}_mU,U\right>_{\mathcal{H}}\right)=c(m-1)\int_0^L\abs{\omega_x}^2dx+\frac{cm}{2}\int_0^{L}\int_0^\infty	\sigma'(s)\abs{\eta_x}^2dsdx\leq 0,
\end{equation}
which implies that ${\mathcal{A}}_m$ is dissipative. Now, let us prove that ${\mathcal{A}}_m$ is maximal. For this aim, let $F=(f^1,f^2,f^3,f^4,f^5,f^6(\cdot,s))^{\top}\in \mathcal{H}$, we want to find $U=(u,v,y,z,w,\eta(\cdot,s))^{\top}\in D({\mathcal{A}}_m)$ unique solution of 
\begin{equation}\label{max1}
-{\mathcal{A}}_m U=F.	
\end{equation}
Equivalently, we have the following system 
\begin{eqnarray}
-v&=&f^1,\label{max2}\\
-\alpha u_{xx}+\gamma \beta y_{xx}+\delta \omega_x &=&\rho f^2,\label{max3}\\
-z&=&f^3,\label{max4}\\
-\beta y_{xx}+\gamma \beta u_{xx}&=&\mu f^4,\label{max5}\\
-c\Lambda^m_{xx}+\delta v_x&=&f^5,\label{max6}\\
\eta_s-w &=&f^6.\label{max7}	
\end{eqnarray} 
Thanks to \eqref{max2} and \eqref{max4}, it follows that, $v,z\in H_L^1(0,L)$ and 
\begin{equation}\label{max8}
v=-f_1\quad \text{and}\quad z=-f_3.
\end{equation}
From \eqref{max7}, we obtain 
\begin{equation}\label{1max8}
\eta(\cdot,s)=s w(x)+\int_0^s f^6(x,\tau)d\tau.
\end{equation}
Then, from \eqref{max2} and \eqref{max6}, we obtain 
\begin{equation}\label{2max9}
\Lambda^m_{xx}=-c^{-1}\left(f^5+\delta f^1_x\right),	
\end{equation}
it yields, 
\begin{equation}\label{3max9}
\Lambda^m(x)=-c^{-1}\int_0^x\int_0^{x_1} \left(f^5+\delta f^1_{x_2}\right)	dx_2dx_1+c^{-1}\frac{x}{L}\int_0^L\int_0^{x_1}\left(f^5+\delta f^1_{x_2}\right)dx_2dx_1.
\end{equation}
Now, using the definition of $\Lambda^m$, \eqref{1max8} and \eqref{3max9}, we get 
\begin{equation}\label{4max9}
\begin{array}{l}
\displaystyle 
w=\frac{1}{\tilde{m}}\left[\Lambda^m(x)-m\int_0^\infty\sigma(s)\int_0^sf^6(\tau)d\tau ds\right],
\end{array}
\end{equation}
where $\displaystyle{\tilde{m}= c(1-m)+cm\int_0^{\infty}s\sigma(s) ds>0}$ and by using \eqref{CONDITION-H} $\tilde{m}<\infty$ . It is easy to see that $w\in H_0^1(0,L)$. It follows, from the previous result and equations \eqref{max7}, \eqref{1max8} and \eqref{max2}, that 
\begin{equation}\label{5max9}
\eta,\ \eta_s\in W\quad \text{and}\quad \Lambda_{xx}\in L^2(0,L).
\end{equation}
Now, let $\phi^1, \phi^2\in H_L^1(0,L)$  for all $i=1,2$. Multiplying \eqref{max3} and \eqref{max5} respectively  by $\phi^1$ and $\phi^2$, integrating by parts  over $(0,L)$, we get 
\begin{eqnarray}
\alpha \int_0^Lu_x\overline{\phi^1_x}dx-\gamma\beta\int_0^Ly_x\overline{\phi^1_x}dx&=&\int_0^L(\rho f^2+f^1)\overline{\phi^1}dx-\delta \int_0^Lw_x\overline{\phi^1}dx,\label{max12}\\
\alpha \int_0^Ly_x\overline{\phi^2_x}dx-\gamma\beta\int_0^Lu_x\overline{\phi^2_x}dx&=&\int_0^L(\mu f^4+f^3)\overline{\phi^2}dx\label{max13}.
\end{eqnarray}
Adding \eqref{max12} and  \eqref{max13}, and using the fact that $\alpha=\alpha_1+\gamma^2\beta$, we obtain 
\begin{equation}\label{max17}
\mathcal{B}((u,y),(\phi^1,\phi^2))=\mathcal{L}(\phi_1,\phi_2),\quad  \forall\, (\phi^1,\phi^2)\in H_L^1(0,L)\times H_L^1(0,L),
\end{equation}
where 
\begin{equation*}
\mathcal{B}((u,y),(\phi^1,\phi^2))=\alpha_1\int_0^Lu_x\overline{\varphi^1_x}dx+\beta \left[\int_0^L\left(\gamma^2u_x\overline{\phi^1}-\gamma y_x\overline{\phi^1_x}-\gamma u_x\overline{\phi^2_x}+y_x\overline{\phi^2_x}\right)dx\right]
\end{equation*}
and 
\begin{equation*}
\mathcal{L}(\phi_1,\phi_2)=\int_0^L(f^2+f^1)\overline{\phi^1}dx-\delta \int_0^Lw_x\overline{\phi^1}dx+\int_0^L(f^4+f^3)\overline{\phi^2}dx.
\end{equation*}
It is easy to see that, $\mathcal{B}$ is a sesquilinear, continuous and coercive form on $\left(H_L^1(0,L)\times H_L^1(0,1)\right)^2$ and $\mathcal{L}$ is a antilinear and continuous form on $H_L^1(0,L)\times H_L^1(0,1)$. Then, it follows by Lax-Milgram theorem that \eqref{max17} admits a unique solution $(u,y)\in \left(H_L^1(0,L)\times H_L^1(0,1)\right)$. From \eqref{max3}, \eqref{max5}, \eqref{4max9} and the fact that, $\alpha=\alpha_1+\gamma^2\beta$, we have 
\begin{equation*}
-\alpha_1u_{xx}=f^2+\gamma f^4-\frac{\delta}{\tilde{m}}\left[\Lambda^m(x)-m\int_0^\infty\sigma(s)\int_0^sf^6(\tau)d\tau ds\right]_x\in L^2(0,L)	
\end{equation*}
and 
\begin{equation*}
-\beta y_{xx}=\frac{\alpha}{\alpha_1}f^4+\frac{\gamma\beta}{\alpha_1}\left(f^2-\frac{\delta}{\tilde{m}}\left[\Lambda^m(x)-m\int_0^\infty\sigma(s)\int_0^sf^6(\tau)d\tau ds\right]_x\right)\in L^2(0,L). 
\end{equation*}
They follows that $u,y\in H^2(0,L)$.  Consequently,  $U=(u,-f^1,y,-f^3,w,\eta)^{\top} \in D({\mathcal{A}}_m) $ is a unique solution of \eqref{max1}. Then, ${\mathcal{A}}_m$ is an isomorphism and since $\rho\left({\mathcal{A}}_m\right)$ is open set of $\mathbb{C}$ (see Theorem 6.7 (Chapter III) in \cite{Kato01}),  we easily get $R(\lambda I -{\mathcal{A}}_m) = {\mathcal{H}}$ for a sufficiently small $\lambda>0 $. This, together with the dissipativeness of ${\mathcal{A}}_m$, imply that   $D\left({\mathcal{A}}_m\right)$ is dense in ${\mathcal{H}}$   and that ${\mathcal{A}}_m$ is m-dissipative in ${\mathcal{H}}$ (see Theorems 4.5, 4.6 in  \cite{Pazy01}). The proof is thus complete.

\noindent According to Lumer-Philips theorem (see \cite{Pazy01}), Proposition \ref{m-dissipative} implies that the operator $\AA$ generates a $C_{0}$-semigroup of contractions $e^{t\AA}$ in $\HH$ which gives the well-posedness of \eqref{evolution}. Then, we have the following result:
\begin{theoreme}{\rm
For all $U_0 \in \HH$,  system \eqref{Pizo1} admits a unique weak solution $$U(t)=e^{t\AA}U_0 \in C^0 (\R^+ ,\HH).
	$$ Moreover, if $U_0 \in D(\AA)$, then the system \eqref{Pizo1} admits a unique strong solution $$U(t)=e^{t\AA}U_0 \in C^0 (\R^+ ,D(\AA))\cap C^1 (\R^+ ,\HH).$$}
\end{theoreme}

\end{proof}

\section{Exponential Stability of Piezoelectric with Coleman-Gurtin thermal law \eqref{Pizo}}\label{EXP-STA}
\noindent In this section,    we shall analyze the exponential stability of system \eqref{Pizo}. The main result of this section is the following theorem 
\begin{theoreme}\label{EXPONENTIAL-THM}
Assume that the conditions \eqref{CONDITION-H} holds and $m\in (0,1)$. Then the $C_0$-semigroup of contractions $(e^{t\mathcal{A)}})_{t\geq 0}$ is exponentially stable; i.e. there exists constants $M\geq 1$ and $\epsilon>0$ independent of $U_0$ such that
\begin{equation}\label{EXPO}
\|e^{t{\mathcal{A}}_m}U_0\|_{\mathcal{H}}\leq Me^{-\epsilon t}\|U_0\|_{\mathcal{H}}.	\end{equation}
\xqed{$\square$}
\end{theoreme}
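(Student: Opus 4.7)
The strategy is to invoke the Gearhart--Huang--Pr\"uss theorem, which characterizes exponential stability of a $C_0$--semigroup of contractions on a Hilbert space by the two resolvent conditions $i\mathbb{R}\subset\rho(\mathcal{A}_m)$ and $\sup_{\lambda\in\mathbb{R}}\|(i\lambda I-\mathcal{A}_m)^{-1}\|_{\mathcal{L}(\mathcal{H})}<\infty$. Both are verified simultaneously by contradiction: if either fails, a standard argument produces sequences $(\lambda_n)\subset\mathbb{R}$ and $(U_n)=(u_n,v_n,y_n,z_n,\omega_n,\eta_n)\subset D(\mathcal{A}_m)$ with $\|U_n\|_{\mathcal{H}}=1$ and $F_n:=(i\lambda_n I-\mathcal{A}_m)U_n\to 0$ in $\mathcal{H}$, and the aim is to show $\|U_n\|_{\mathcal{H}}\to 0$, a contradiction.

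The first step extracts information from dissipation. Taking the real part of $\langle F_n,U_n\rangle_{\mathcal{H}}\to 0$ and using Lemma~\ref{Energy} together with $m\in(0,1)$ produces
\begin{equation*}
c(1-m)\int_0^L|\omega_{n,x}|^2\,dx-\frac{cm}{2}\int_0^L\!\int_0^{\infty}\sigma'(s)|\eta_{n,x}|^2\,ds\,dx\longrightarrow 0,
\end{equation*}
so $\|\omega_{n,x}\|_{L^2}\to 0$; the Dafermos condition $-\sigma'(s)\geq d_\sigma\sigma(s)$ from \eqref{CONDITION-H} then yields $\|\eta_n\|_W\to 0$, and Poincar\'e's inequality in $H^1_0(0,L)$ gives $\omega_n\to 0$ in $L^2(0,L)$.

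The second step transfers this decay to the beam components. Eliminating $v_n=i\lambda_n u_n-f_n^1$ and $z_n=i\lambda_n y_n-f_n^3$, the resolvent system becomes
\begin{equation*}
-\lambda_n^2\rho u_n-\alpha u_{n,xx}+\gamma\beta y_{n,xx}+\delta\omega_{n,x}=\rho(f_n^2+i\lambda_n f_n^1),
\end{equation*}
\begin{equation*}
-\lambda_n^2\mu y_n-\beta y_{n,xx}+\gamma\beta u_{n,xx}=\mu(f_n^4+i\lambda_n f_n^3),
\end{equation*}
coupled with $i\lambda_n\omega_n-c\Lambda^m_{n,xx}+\delta v_{n,x}=f_n^5$. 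Multiplying the two beam equations by $\overline{v_n}$ and $\overline{z_n}$, integrating by parts using the boundary conditions $u_n(0)=y_n(0)=0$ and the Neumann relations $u_{n,x}(L)=y_{n,x}(L)=0$ built into $D(\mathcal{A}_m)$, then summing and invoking the identity $\alpha=\alpha_1+\gamma^2\beta$, the conservative quadratic part cancels and the whole estimate reduces to controlling the coupling term $\delta\int_0^L\omega_{n,x}\overline{v_n}\,dx$. To close this, I would first test the $\omega$--equation against $\omega_n$ to show that $\lambda_n\omega_n$ remains bounded in a convenient negative norm, using $\|\omega_{n,x}\|_{L^2}\to 0$, $\|\eta_n\|_W\to 0$ and the boundedness of $v_n$ in $L^2$; then test it against $\overline{u_{n,x}}$ to exploit the reciprocal coupling $\delta v_{n,x}$ and eliminate the cross--term.

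The main obstacle is precisely this last step: the purely mechanical/electrical subsystem $(u,y)$ is conservative, so the whole decay must travel through the two off--diagonal terms $\delta\omega_x$ (in the first beam equation) and $\delta v_x$ (in the heat equation), and a careful balance of the two multiplier identities is required. The hypothesis $m\in(0,1)$ is essential, as it provides both the instantaneous diffusivity $c(1-m)>0$ (yielding $\|\omega_{n,x}\|_{L^2}\to 0$ uniformly in $\lambda_n$) and the nontrivial memory coercivity through $\sigma$ (used to bound $\eta_n$ in $W$). Combining all estimates one derives, successively, $\|v_n\|_{L^2}\to 0$, $\|\gamma u_{n,x}-y_{n,x}\|_{L^2}\to 0$, $\|z_n\|_{L^2}\to 0$ and $\|u_{n,x}\|_{L^2}\to 0$, which contradicts $\|U_n\|_{\mathcal{H}}=1$ and simultaneously rules out imaginary eigenvalues and yields the uniform resolvent bound demanded by Gearhart--Huang--Pr\"uss, proving \eqref{EXPO}.
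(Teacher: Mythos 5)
Your overall strategy coincides with the paper's: both invoke the Huang--Pr\"{u}ss characterization and run a contradiction argument on sequences $(\lambda_n,U_n)$, both extract $\|\omega_{n,x}\|_{L^2}\to 0$ and $\|\eta_n\|_W\to 0$ from the dissipation, and your proposal to test the heat equation against $\overline{u_{n,x}}$ is exactly the paper's key multiplier (Lemma \ref{LEMMA2-EST}): writing $\delta v_{n,x}=i\lambda_n\delta u_{n,x}-\delta f^1_{n,x}$ turns the heat equation into an identity for $\delta\|u_{n,x}\|^2$ after division by $i\lambda_n$. However, your write-up stops short of the only genuinely delicate point, and as stated it does not close. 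When you integrate $\int_0^L\Lambda^m_{n,xx}\overline{u_{n,x}}\,dx$ by parts you pick up the boundary term $-\Lambda^m_{n,x}(0)\overline{u_{n,x}(0)}$ (only $u_{n,x}(L)=0$ is available; $u_{n,x}(0)$ need not vanish), and each factor separately is of size $O(\abs{\lambda_n}^{1/2})$, so the naive bound on $\abs{\lambda_n}^{-1}\abs{\Lambda^m_{n,x}(0)}\,\abs{u_{n,x}(0)}$ is only $O(1)$, not $o(1)$. The paper closes this by the Gagliardo--Nirenberg interpolation $\abs{\Lambda^m_{n,x}(0)}\lesssim\|\Lambda^m_{n,xx}\|^{1/2}\|\Lambda^m_{n,x}\|^{1/2}+\|\Lambda^m_{n,x}\|$ combined with the dissipation estimate $\|\Lambda^m_{n,x}\|^2\lesssim\|F_n\|_{\mathcal H}\|U_n\|_{\mathcal H}=o(1)$ from \eqref{Lemm1-EST4}, which upgrades the boundary term to $o(\abs{\lambda_n})$; the same interpolation is needed for the interior term $\abs{\lambda_n}^{-1}\int_0^L\abs{\Lambda^m_{n,x}}\abs{u_{n,xx}}\,dx$, using $\|u_{n,xx}\|=O(\abs{\lambda_n})$. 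Without some version of this, the decisive estimate $\|u_{n,x}\|\to 0$ is not established.

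Two smaller points. Multiplying the two beam equations by $\overline{v_n}$ and $\overline{z_n}$ and summing merely reproduces the dissipation identity: after taking real parts it yields $\Re\bigl(\delta\int_0^L\omega_{n,x}\overline{v_n}\,dx\bigr)\to 0$ and nothing else, so it cannot be the vehicle for transferring decay to $(u,y)$; that step is information-free and can be dropped. And your closing chain is in the wrong logical order: the multipliers actually give $\|u_{n,x}\|\to 0$ first, then $\|\lambda_n u_n\|\to 0$ (hence $\|v_n\|\to 0$) by testing the reduced first beam equation against $\overline{u_n}$, and finally $\|y_{n,x}\|,\|\lambda_n y_n\|\to 0$ (hence $\|z_n\|\to 0$) by testing against $\overline{y_n}$; there is no mechanism in your argument producing $\|v_n\|\to 0$ before $\|u_{n,x}\|\to 0$.
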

\noindent According to Huang \cite{Huang01} and Pr\"{u}ss \cite{pruss01},  a $C_0-$semigroup of contractions $\left(e^{t{\mathcal{A}}_m}\right)_{t\geq 0}$ on $\mathcal{H}$ satisfy \eqref{EXPO} if 
\begin{equation}\label{EXPO1}\tag{${\rm E1}$}
i\mathbb{R}\subset \rho({\mathcal{A}}_m)	
\end{equation}
and
\begin{equation}\label{EXPO2}\tag{${\rm E2}$}
\sup_{\la \in \mathbb{R}}\|(i\lambda I-{\mathcal{A}}_m)^{-1}\|_{\mathcal{L}(\mathcal{H})}=O(1)
\end{equation}
hold. Let $\left(\la,U:=(u,v,y,z,w,\eta)\right)\in \mathbb{R}^{\ast}\times D({\mathcal{A}}_m)$, such that 
\begin{equation}\label{EXPO-1}
\left(i\la I-{\mathcal{A}}_m\right)U=F:= (f_1,f_2,f_3,f_4,f_5,f_6)\in \mathcal{H}. 
\end{equation}
That is 
\begin{eqnarray}
i\lambda u-v&=&f^{1}\quad \text{in}\ \ H_L^1(0,L),\label{NU-eq1}\\
i\lambda\rho v-\alpha u_{xx}+\gamma\beta y_{xx}+\delta \omega_x&=&\rho f^{2}\quad \text{in}\ \ L^2(0,L),\label{NU-eq2}\\
i\lambda y-z&=&f^{3}\quad \text{in}\ \ H_L^1(0,L),\label{NU-eq3}\\ 
i\lambda\mu z-\beta y_{xx}+\gamma\beta u_{xx}&=&\mu f^{4}\quad \text{in}\ \ L^2(0,L),\label{NU-eq4}\\
i\lambda w-c\Lambda^m_{xx}+\delta v_x&=&f^{5}\quad \text{in}\ \ L^2(0,L),\label{NU-eq5}\\
i\lambda \eta+\eta_s-w&=&f^{6}(\cdot,s)\quad \text{in}\ \ W.\label{NU-eq6}
\end{eqnarray}
Here and below, we occasionally write $p\lesssim q$ to indicate that $p\leq C  q$ for some (implicit) constant $C>0$. The next Lemmas are a technical results to be used in the proof of Theorem \ref{EXPONENTIAL-THM}. 
\begin{lemma}\label{LEMMA1-EST}
Assume that the conditions \eqref{CONDITION-H} holds and $m\in (0,1)$. The solution $(u,v,y,z,w,\eta)\in D({\mathcal{A}}_m)$ of equation \eqref{EXPO-1} satisfies the following estimates 
\begin{equation}\label{Lemm1-EST1}
\int_0^L\abs{\omega_x}^2dx\leq K_1\|F\|_{\mathcal{H}}\|U\|_{\mathcal{H}},
\end{equation}
\begin{equation}\label{Lemm1-EST2}
\int_0^{L}\int_0^\infty\sigma(s)\abs{\eta_x}^2ds dx\leq K_2\|F\|_{\mathcal{H}}\|U\|_{\mathcal{H}},
\end{equation}
\begin{equation}\label{Lemm1-EST3}
\int_0^L\abs{\omega}^2dx\leq K_3\|F\|_{\mathcal{H}}\|U\|_{\mathcal{H}},
\end{equation}
\begin{equation}\label{Lemm1-EST4}
\int_0^L\abs{\Lambda^m_x}^2dx\leq K_4\|F\|_{\mathcal{H}}\|U\|_{\mathcal{H}},
\end{equation}
\end{lemma}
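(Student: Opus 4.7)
The plan is to deduce all four estimates from a single computation: pair the resolvent equation \eqref{EXPO-1} with $U$ in $\mathcal{H}$ and take real parts. This yields
\begin{equation*}
-\Re\bigl\langle {\mathcal{A}}_m U, U\bigr\rangle_{\mathcal{H}} = \Re\bigl\langle F, U\bigr\rangle_{\mathcal{H}} \leq \|F\|_{\mathcal{H}}\|U\|_{\mathcal{H}},
\end{equation*}
and substituting the dissipation identity \eqref{dissipative} gives
\begin{equation*}
c(1-m)\int_0^L|\omega_x|^2\,dx - \frac{cm}{2}\int_0^L\!\!\int_0^\infty \sigma'(s)|\eta_x|^2\,ds\,dx \leq \|F\|_{\mathcal{H}}\|U\|_{\mathcal{H}}.
\end{equation*}
Since $m\in(0,1)$, both terms on the left have the correct sign, and the first one directly produces \eqref{Lemm1-EST1} with $K_1 = [c(1-m)]^{-1}$.

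To obtain \eqref{Lemm1-EST2}, I would invoke the Dafermos condition in hypothesis \eqref{CONDITION-H}, namely $\sigma'(s)\leq -d_\sigma \sigma(s)$, which yields $-\sigma'(s)\geq d_\sigma \sigma(s)$ pointwise. Multiplying by $|\eta_x|^2$ and integrating over $(0,L)\times(0,\infty)$, the second term of the dissipation controls $\frac{cm\,d_\sigma}{2}\int_0^L\!\int_0^\infty \sigma(s)|\eta_x|^2\,ds\,dx$ from above by $\|F\|_{\mathcal{H}}\|U\|_{\mathcal{H}}$, which is \eqref{Lemm1-EST2}.

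Estimate \eqref{Lemm1-EST3} is an immediate consequence of \eqref{Lemm1-EST1}: since $w\in H_0^1(0,L)$, Poincaré's inequality gives $\int_0^L|\omega|^2\,dx \leq C_p \int_0^L|\omega_x|^2\,dx$, so $K_3 = C_p K_1$ works.

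For \eqref{Lemm1-EST4}, I would start from the definition $\Lambda^m = (1-m)\omega + m\int_0^\infty \sigma(s)\eta(s)\,ds$, differentiate with respect to $x$, and apply the elementary inequality $(a+b)^2\leq 2a^2+2b^2$ together with the Cauchy--Schwarz inequality with respect to the finite measure $\sigma(s)\,ds$ (recall $\int_0^\infty \sigma(s)\,ds = g(0) < \infty$ by \eqref{CONDITION-H}). This yields pointwise in $x$
\begin{equation*}
|\Lambda^m_x|^2 \leq 2(1-m)^2|\omega_x|^2 + 2m^2 g(0)\int_0^\infty \sigma(s)|\eta_x|^2\,ds,
\end{equation*}
and integrating in $x$ and combining with \eqref{Lemm1-EST1}--\eqref{Lemm1-EST2} gives \eqref{Lemm1-EST4}. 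No step here should be serious; the only point that needs care is the sign bookkeeping in the Dafermos step and the verification that $\tilde{m}$ does not appear as a denominator that could blow up (the bounds degenerate as $m\to 0$ or $m\to 1$, but $m\in(0,1)$ is fixed in Theorem~\ref{EXPONENTIAL-THM}, so the constants $K_i$ depend on $m$ and $d_\sigma$ but are finite).
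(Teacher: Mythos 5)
Your proposal is correct and follows essentially the same route as the paper: take the real part of $\langle(i\lambda I-\mathcal{A}_m)U,U\rangle_{\mathcal{H}}$ to extract the dissipation, apply the Dafermos condition $\sigma'\leq-d_\sigma\sigma$ for \eqref{Lemm1-EST2}, Poincar\'e for \eqref{Lemm1-EST3}, and the splitting of $\Lambda^m_x$ via Cauchy--Schwarz against the finite measure $\sigma(s)\,ds$ for \eqref{Lemm1-EST4}. The constants match (your retention of the factor $m^2$ in the last bound is marginally sharper than the paper's, which simply drops it since $m\in(0,1)$), so there is nothing to add.
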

\noindent where 
$$
K_1= \frac{1}{c(1-m)}, \quad K_2=\frac{2}{cmd_{\sigma}},\quad K_3= c_p K_1\quad\text{and}\ \  K_4=\frac{2(1-m)}{c}+\frac{4g(0)}{cmd_{\sigma}}.
$$
\begin{proof}
First, taking the inner product of \eqref{EXPO-1} with $U$ in $\mathcal{H}$, we get
\begin{equation}\label{000Lemma1-eq4}
c(1-m)\int_0^L\abs{\omega_x}^2dx-\frac{cm}{2}\int_0^L\int_0^{\infty}	\sigma'(s)\abs{\eta_x}^2dsdx=-\Re\left(\left<{\mathcal{A}}_mU^n,U^n\right>_{\mathcal{H}}\right)\leq \|F_n\|_{\mathcal{H}}\|U\|_{\mathcal{H}}.
\end{equation}
From condition \eqref{CONDITION-H}, we obtain 
\begin{equation*}
\int_0^L\int_0^{\infty}\sigma(s)\abs{\eta_x}^2dsdx\leq -\frac{1}{d_{\sigma}}\int_0^L\int_0^{\infty}	\sigma'(s)\abs{\eta_x}^2dsdx.	
\end{equation*}
Using the above estimation in \eqref{000Lemma1-eq4}, we get 
\begin{equation}\label{Lemma1-eq4}
c(1-m)\int_0^L\abs{\omega_x}^2dx+\frac{cmd_{\sigma}}{2}\int_0^L\int_0^{\infty}\sigma(s)\abs{\eta_x}^2dsdx=-\Re\left(\left<{\mathcal{A}}_mU,U\right>_{\mathcal{H}}\right)\leq \|F\|_{\mathcal{H}}\|U\|_{\mathcal{H}},
\end{equation}
using the fact that $m\in(0,1)$ in \eqref{Lemma1-eq4}, then we get \eqref{Lemm1-EST1} and \eqref{Lemm1-EST2}.  Using \eqref{Lemm1-EST1} and Poincar\'e  inequality, we obtain \eqref{Lemm1-EST3}. Finally, by using Cauchy-Schwarz inequality, we obtain 
\begin{equation*}
\begin{array}{lll}
\displaystyle
\int_0^L\abs{\Lambda^m_x}^2dx&\leq&\displaystyle 2(1-m)^2\int_0^L\abs{w_x}^2dx+2\left(\int_0^{\infty}\sigma(s)ds\right)\int_0^L\int_0^{\infty}\sigma(s)\abs{\eta_x(s)}^2dsdx\\
&\leq&\displaystyle  2(1-m)^2\int_0^L\abs{w_x}^2dx+2g(0)\int_0^L\int_0^{\infty}\sigma(s)\abs{\eta_x(s)}^2dsdx.
\end{array}
\end{equation*}
Using \eqref{Lemm1-EST1} and \eqref{Lemm1-EST2} in the above inequality, we get \eqref{Lemm1-EST4}, the proof is thus completed. 
\end{proof}
\begin{lemma}\label{LEMMA2-EST}
Assume that the conditions \eqref{CONDITION-H} holds and $m\in (0,1)$. The solution $(u,v,y,z,w,\eta)\in D({\mathcal{A}}_m)$ of equation \eqref{EXPO-1} satisfies the following estimation 
\begin{equation}\label{Lemm2-1EST1}
\begin{array}{l}
\displaystyle 
\int_0^L\abs{u_x}^2dx\lesssim \left(\abs{\la}^{-1}+1\right)\|F\|_{\mathcal{H}}\|U\|_{\mathcal{H}}+\abs{\la}^{-1}\|F\|_{\mathcal{H}}^{\frac{1}{2}}\|U\|_{\mathcal{H}}^{\frac{1}{2}}\left((\abs{\la}+1)\|U\|_{\mathcal{H}}+\|F\|_{\mathcal{H}}\right)\\
\displaystyle 
+\abs{\la}^{-1}\|F\|^{\frac{1}{4}}_{\mathcal{H}}\left(A_{\lambda}^2(U,F)\|U\|_{\mathcal{H}}^{\frac{3}{4}}+A_{\lambda}(U,F)\|U\|_{\mathcal{H}}^{\frac{5}{4}}\right),
\end{array}
\end{equation}
where $A_{\lambda}(U,F)=\left(\abs{\la}^{\frac{1}{2}}+1\right)\|U\|^{\frac{1}{2}}_{\mathcal{H}}+\|F\|_{\mathcal{H}}^{\frac{1}{2}}$.
\end{lemma}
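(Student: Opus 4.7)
The plan is to produce a multiplier identity of the form ``$\alpha_1\int_0^L|u_x|^2$ equals a combination of quantities controlled by Lemma~\ref{LEMMA1-EST}, plus boundary traces and lower-order remainders''. The natural starting point is to multiply \eqref{NU-eq2} by $\bar u$ and \eqref{NU-eq4} by $\gamma\bar u$, add the two, and integrate by parts using $u(0)=0$, $u_x(L)=y_x(L)=0$, and $\omega(0)=\omega(L)=0$. Since $\alpha-\gamma^2\beta=\alpha_1$, the $u_{xx}\bar u$ contributions combine to $\alpha_1\int|u_x|^2$ and the $y_x\bar u_x$ cross-terms cancel, giving
\begin{equation*}
\alpha_1\int_0^L|u_x|^2\,dx
=-i\lambda\rho\int_0^L v\bar u\,dx-i\lambda\mu\gamma\int_0^L z\bar u\,dx-\delta\int_0^L\omega_x\bar u\,dx+\int_0^L(\rho f^2+\mu\gamma f^4)\bar u\,dx.
\end{equation*}
Substituting $-i\lambda\bar u=\bar v+\bar f^1$ from \eqref{NU-eq1}, the first term becomes $\rho\|v\|_{L^2}^2+\rho\int v\bar f^1$ and the second becomes $\mu\gamma\int z\bar v+\mu\gamma\int z\bar f^1$.

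The essentially energetic terms $\rho\|v\|_{L^2}^2$ and $\mu\gamma\int z\bar v$ are not immediately small and so must be re-expressed using the heat equation \eqref{NU-eq5}. Testing \eqref{NU-eq5} against $-\delta^{-1}\bar u_x$, and using $v_x=i\lambda u_x-f^1_x$ (obtained by differentiating \eqref{NU-eq1}), converts these terms into quantities of the form $\lambda^{-1}\int w\bar u_x$ and $\lambda^{-1}\int\Lambda^m_x\bar u_x$, each carrying a factor $|\lambda|^{-1}$ and each controlled by Lemma~\ref{LEMMA1-EST}; this is the origin of the $|\lambda|^{-1}$ prefactor in \eqref{Lemm2-1EST1}. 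The remaining volume integrals are then estimated by Cauchy--Schwarz together with the bounds $\|\omega_x\|^2,\|w\|^2,\|\Lambda^m_x\|^2\lesssim\|F\|_{\mathcal{H}}\|U\|_{\mathcal{H}}$ from Lemma~\ref{LEMMA1-EST} and the trivial ones $\|v\|,\|z\|,\|u_x\|\lesssim\|U\|_{\mathcal{H}}$. The composite factor $(|\lambda|+1)\|U\|_{\mathcal{H}}+\|F\|_{\mathcal{H}}$ appearing in the middle term of \eqref{Lemm2-1EST1} arises from the pointwise bound $\|v_x\|_{L^2}\leq|\lambda|\|u_x\|+\|f^1_x\|\leq(|\lambda|+1)\|U\|_{\mathcal{H}}+\|F\|_{\mathcal{H}}$ whenever $v_x$ is exposed by an integration by parts.

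The main obstacle is the treatment of the boundary traces at $x=L$, which are unavoidable whenever one integrates by parts the heat equation against $\bar u$ or $\bar v$ (functions that do not vanish at $L$); typical surviving terms are $c\Lambda^m_x(L)\bar u(L)$ and $\delta v(L)\bar u(L)$. To control them I would apply the one-dimensional Gagliardo--Nirenberg interpolation $\|\varphi\|_{L^\infty(0,L)}^2\lesssim\|\varphi\|_{L^2}(\|\varphi\|_{L^2}+\|\varphi_x\|_{L^2})$ successively to $\varphi\in\{\omega,\Lambda^m,v,u\}$. Combining this with the square-root bounds provided by Lemma~\ref{LEMMA1-EST} is what produces the non-integer exponents $\|F\|_{\mathcal{H}}^{1/4}$, $\|U\|_{\mathcal{H}}^{3/4}$, $\|U\|_{\mathcal{H}}^{5/4}$ and the compound quantity $A_\lambda(U,F)$ in \eqref{Lemm2-1EST1}. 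Collecting all contributions and grouping them according to their $|\lambda|$-weight yields the stated estimate.
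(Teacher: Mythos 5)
There is a genuine gap in your proposed architecture. Your starting identity, obtained by testing \eqref{NU-eq2} against $\bar u$ and \eqref{NU-eq4} against $\gamma\bar u$, reads (after substituting $-i\lambda\bar u=\bar v+\bar f^1$)
\begin{equation*}
\alpha_1\int_0^L\abs{u_x}^2dx=\rho\int_0^L\abs{v}^2dx+\mu\gamma\int_0^Lz\overline{v}\,dx-\delta\int_0^L\omega_x\bar u\,dx+\text{(terms controlled by $\|F\|_{\mathcal H}$)} .
\end{equation*}
The term $\rho\|v\|^2_{L^2}+\mu\gamma\int z\bar v$ is of size $\|U\|_{\mathcal{H}}^2$ and there is no mechanism to make it small at this stage: the smallness of $\|v\|$ and $\|z\|$ is only established \emph{after} this lemma, via Lemmas \ref{LEMMA3-EST} and \ref{LEMMA4-EST}, which themselves take $\int_0^L\abs{u_x}^2dx$ as an input. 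Your claim that testing \eqref{NU-eq5} against $-\delta^{-1}\bar u_x$ ``converts'' $\rho\|v\|^2+\mu\gamma\int z\bar v$ into $\lambda^{-1}\int w\bar u_x$ and $\lambda^{-1}\int\Lambda^m_x\bar u_x$ is unfounded: that multiplier produces a \emph{separate, self-contained} identity for $\delta\int_0^L\abs{u_x}^2dx$ (this is exactly \eqref{LEMMA2-EQ1} multiplied by $-i\lambda^{-1}\bar u_x$ in the paper), and nowhere in it do $\|v\|^2$ or $\int z\bar v$ appear, so nothing cancels. As written, your first identity is an equipartition-type relation $\alpha_1\|u_x\|^2-\rho\|v\|^2=\dots$ and is a dead end for bounding $\|u_x\|^2$ alone.

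The correct proof is essentially the second half of your plan with the first half deleted: from \eqref{NU-eq1} and \eqref{NU-eq5} one gets $i\lambda\delta u_x=-i\lambda w+c\Lambda^m_{xx}+f^5+\delta f^1_x$, multiplies by $-i\lambda^{-1}\bar u_x$, and integrates by parts; the elastic equations enter only to bound $\|u_{xx}\|\lesssim(\abs{\la}+1)\|U\|_{\mathcal H}+\|F\|_{\mathcal H}$, which is needed after integrating $\int\Lambda^m_{xx}\bar u_x$ by parts. Note also that with this multiplier the only surviving trace is $\Lambda^m_x(0)\bar u_x(0)$ at $x=0$ (the contribution at $x=L$ vanishes because $u_x(L)=0$), not the terms $\Lambda^m_x(L)\bar u(L)$ and $v(L)\bar u(L)$ you anticipate; your Gagliardo--Nirenberg treatment of the trace is nevertheless the right idea and does account for the exponents $\|F\|_{\mathcal H}^{1/4}$, $\|U\|_{\mathcal H}^{3/4}$, $\|U\|_{\mathcal H}^{5/4}$ and the factor $A_\lambda(U,F)$.
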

\begin{proof}
From \eqref{NU-eq1} and \eqref{NU-eq5}, we obtain 
\begin{equation}\label{LEMMA2-EQ1}
i\lambda \delta u_{x}=-i\la w+c\Lambda^m_{xx}+f^5+\delta f^1_x.
\end{equation}
Multiplying \eqref{LEMMA2-EQ1} by $-i\la^{-1}\overline{u_x}$, integrating by parts over $(0,L)$, we get 
\begin{equation*}
\delta \int_0^L\abs{u_x}^2dx=-\int_0^Lw\overline{u_x}dx+i\lambda^{-1}c\int_0^L\Lambda^m_x\overline{u_{xx}}dx+i\lambda^{-1}c\Lambda^m_x(0) \overline{u_x}(0)-i\lambda^{-1}\int_0^L\left(f^5+\delta f^1_x\right)\overline{u_x}dx.
\end{equation*}
It follows that, 
\begin{equation}\label{LEMMA2-EQ2}
\delta \int_0^L\abs{u_x}^2dx \leq \int_0^L\abs{w}\abs{u_x}dx+\abs{\la}^{-1}c\int_0^L\abs{\Lambda^m_x}\abs{u_{xx}}dx+\abs{\la}^{-1}c\abs{\Lambda^m_x(0)}\abs{u_x(0)}+\abs{\la}^{-1}\int_0^L\abs{f^5+\delta f_x^1}\abs{u_x}dx.
\end{equation}
Using  Cauchy-Schwarz inequality,  and the fact that $\|u_x\|_{L^2(0,L)}\leq \frac{1}{\sqrt{\alpha_1}}\|U\|_{\mathcal{H}}$, $\|f^1_x\|\leq \frac{1}{\sqrt{\alpha_1}}\|F\|_{\mathcal{H}}$, $\|f_5\|\leq \|F\|_{\mathcal{H}}$ and \eqref{Lemm1-EST3}, we get the following estimations 
\begin{equation}\label{LEMMA2-EQ3}
\left\{\begin{array}{l}
\displaystyle 
\int_0^L\abs{w}\abs{u_x}dx\leq \displaystyle 
\frac{1}{2\delta}\int_0^L\abs{\omega}^2dx+\frac{\delta}{2}\int_0^L\abs{u_x}^2dx\leq \frac{K_3}{2\delta}\|F\|_{\mathcal{H}}\|U\|_{\mathcal{H}}+\frac{\delta}{2}\int_0^L\abs{u_x}^2dx,\\[0.1in]
\displaystyle 
|\la|^{-1}\int_0^L\abs{f^5}\abs{u_x}dx\leq K_5\abs{\la}^{-1}\|F\|_{\mathcal{H}}\|U\|_{\mathcal{H}}\quad \text{where}\ K_5=\frac{1}{\sqrt{\alpha_1}},\\[0.1in]
\displaystyle 
|\la|^{-1}\delta\int_0^L\abs{f^1_x}\abs{u_x}dx\leq K_6\abs{\la}^{-1}\|F\|_{\mathcal{H}}\|U\|_{\mathcal{H}}\quad \text{where}\ K_6=\frac{\delta}{\alpha_1}.
\end{array}
\right.	
\end{equation}
Now, using the fact that $\alpha=\alpha_1+\gamma^2\beta$, \eqref{NU-eq2} and \eqref{NU-eq4}, we get 
\begin{equation}\label{LEMMA2-EQ4}
u_{xx}=\frac{1}{\alpha_1}\left[i\la \left(\rho v+\gamma\mu z\right)+\delta w_x-\rho f^2-\gamma \mu f^4\right].
\end{equation}
Using the fact that $\displaystyle{\rho\int_0^L\abs{v}^2dx\leq \|U\|^2_{\mathcal{H}},\ \mu\int_0^L\abs{z}^2\leq \|U\|^2_{\mathcal{H}},\rho\int_0^L\abs{f^2}^2dx\leq \|F\|^2_{\mathcal{H}},\mu\int_0^L\abs{f^4}^2dx\leq \|F\|^2_{\mathcal{H}}}$ and $ab\leq a^2+\frac{b^2}{4}$, in \eqref{LEMMA2-EQ4}, we get 
\begin{equation*}
\begin{array}{lll}
\|u_{xx}\|&\leq &\displaystyle  \frac{1}{\alpha_1}\left(\abs{\la}(\sqrt{\rho}+\gamma\sqrt{\mu})\|U\|_{\mathcal{H}}+\delta\sqrt{K_1}\|F\|^{\frac{1}{2}}_{\mathcal{H}}\|U\|^{\frac{1}{2}}_{\mathcal{H}}+(\sqrt{\rho}+\gamma\sqrt{\mu})\|F\|_{\mathcal{H}}\right)\\[0.1in]
&\leq &\displaystyle\frac{1}{\alpha_1} \max\left(\sqrt{\rho}+\gamma\sqrt{\mu},\delta \sqrt{K_1}\right)\left(\abs{\la}\|U\|_{\mathcal{H}}+\|F\|^{\frac{1}{2}}_{\mathcal{H}}\|U\|^{\frac{1}{2}}_{\mathcal{H}}+\|F\|_{\mathcal{H}}\right)	\\[0.1in]
&\leq &\displaystyle\frac{1}{\alpha_1} \max\left(\sqrt{\rho}+\gamma\sqrt{\mu},\delta \sqrt{K_1}\right)\left((\abs{\la}+1)\|U\|_{\mathcal{H}}+\frac{5}{4}\|F\|_{\mathcal{H}}\right).
\end{array}
\end{equation*}
Hence, we get 
\begin{equation}\label{LEMMA2-EQ5}
\|u_{xx}\|_{\mathcal{H}}\leq K_7\left((\abs{\la}+1)\|U\|_{\mathcal{H}}+\|F\|_{\mathcal{H}}\right),	
\end{equation}
where $K_7=\frac{5}{4\alpha_1} \max\left(\sqrt{\rho}+\gamma\sqrt{\mu},\delta \sqrt{K_1}\right)$. Using \eqref{Lemm1-EST4} and \eqref{LEMMA2-EQ5}, we obtain 
\begin{equation}\label{LEMMA2-EQ6-N}
\abs{\la}^{-1}c\int_0^L\abs{\Lambda^m_x}\abs{u_{xx}}dx\leq K_8 \abs{\la}^{-1}\|F\|_{\mathcal{H}}^{\frac{1}{2}}\|U\|_{\mathcal{H}}^{\frac{1}{2}}\left((\abs{\la}+1)\|U\|_{\mathcal{H}}+\|F\|_{\mathcal{H}}\right),
\end{equation}
where $K_8=cK_7\sqrt{K_4}$. From \eqref{LEMMA2-EQ1} and the fact that $\|f^5\|\leq \|F\|_{\mathcal{H}}$, $\|f_x^1\|\leq \alpha_1^{-\frac{1}{2}}\|F\|_{\mathcal{H}}$ and $\|w\|\leq \|U\|_{\mathcal{H}}$, we get 
\begin{equation}\label{LEMMA2-EQ6}
\|\Lambda^m_{xx}\|\leq K_9\left(\abs{\la}\|U\|_{\mathcal{H}}+\|F\|_{\mathcal{H}}\right),
\end{equation}
where $K_9=\left(\frac{\delta}{\sqrt{\alpha_1}}+1\right)$. Using Gagliardo-Nirenberg inequality, we get 
\begin{equation}\label{LEMMA2-EQ7}
\abs{\Lambda^m_x(0)}\leq C_1\|\Lambda^m_{xx}\|^{\frac{1}{2}}\|\Lambda^m_x\|^{\frac{1}{2}}+C_2\|\Lambda^m_x\|\leq \max(C_1,C_2)\|\Lambda^m_x\|^{\frac{1}{2}}\left(\|\Lambda^m_{xx}\|^{\frac{1}{2}}+\|\Lambda^m_x\|^{\frac{1}{2}}\right).
\end{equation}
Thanks to \eqref{LEMMA2-EQ6}, \eqref{Lemm1-EST4} and \eqref{LEMMA2-EQ7}, and the fact that $\sqrt{a+b}\leq \sqrt{a}+\sqrt{b}$ and $ab\leq a^2+\frac{b^2}{4}$, we get 
\begin{equation}\label{LEMMA2-EQ8}
\abs{\Lambda^m_x(0)}\leq K_{10}\|F\|_{\mathcal{H}}^{\frac{1}{4}}\|U\|_{\mathcal{H}}^{\frac{1}{4}}\left(\sqrt{\abs{\la}\|U\|_{\mathcal{H}}+\|F\|_{\mathcal{H}}}+\|F\|_{\mathcal{H}}^{\frac{1}{4}}\|U\|_{\mathcal{H}}^{\frac{1}{4}}\right)\leq K_{10}\|F\|_{\mathcal{H}}^{\frac{1}{4}}\|U\|_{\mathcal{H}}^{\frac{1}{4}}A_{\lambda}(U,F),
\end{equation}
where $K_{10}=\frac{5}{4}\sqrt{\sqrt{K_4}}\max(C_1,C_2)\max\left(\sqrt{K_9},\sqrt{\sqrt{K_4}}\right)$ and $A_{\lambda}(U,F)=\left(\abs{\la}^{\frac{1}{2}}+1\right)\|U\|^{\frac{1}{2}}_{\mathcal{H}}+\|F\|_{\mathcal{H}}^{\frac{1}{2}}$. Again, using Gagliardo-Nirenberg  inequality, we have  
$$
\abs{u_x(0)}\leq C_1\|u_{xx}\|^{\frac{1}{2}}\|u_x\|^{\frac{1}{2}}+C_2\|u_x\|.
$$
Using \eqref{LEMMA2-EQ5} and the fact that $\|u_x\|\leq \frac{1}{\sqrt{\alpha_1}}\|U\|_{\mathcal{H}}$, in the above inequality, we get 
\begin{equation}\label{LEMMA2-EQ9}
\abs{u_x(0)}\leq K_{11}\left(\sqrt{(\abs{\la}+1)\|U\|_{\mathcal{H}}+\|F\|_{\mathcal{H}}}\|U\|^{\frac{1}{2}}_{\mathcal{H}}+\|U\|_{\mathcal{H}}\right)\leq K_{11}\left(A_{\lambda}(U,F)\|U\|^{\frac{1}{2}}_{\mathcal{H}}+\|U\|_{\mathcal{H}}\right),	
\end{equation}
where $K_{11}=\displaystyle{\max\left(\frac{C_1\sqrt{K_7}}{\alpha_1^{\frac{1}{4}}},\frac{C_2}{\sqrt{\alpha_1}}\right)}$. Using \eqref{LEMMA2-EQ7} and \eqref{LEMMA2-EQ9}, we get 
\begin{equation}\label{LEMMA2-EQ10}
\abs{\la}^{-1}c\abs{\Lambda^m_x(0)}\abs{u_x(0)}\leq K_{12}\abs{\la}^{-1}\|F\|_{\mathcal{H}}^{\frac{1}{4}}\left(A_{\lambda}^2(U,F)\|U\|_{\mathcal{H}}^{\frac{3}{4}}+A_{\lambda}(U,F)\|U\|_{\mathcal{H}}^{\frac{5}{4}}\right),	
\end{equation}
where $K_{12}=cK_{10}K_{11}$. Finally, inserting  \eqref{LEMMA2-EQ3}, \eqref{LEMMA2-EQ6-N} and \eqref{LEMMA2-EQ10} in \eqref{LEMMA2-EQ2}, we get 
\begin{equation*}
\begin{array}{l}
\displaystyle
\int_0^L\abs{u_x}^2dx\leq K_{13}\left(\left(\abs{\la}^{-1}+1\right)\|F\|_{\mathcal{H}}\|U\|_{\mathcal{H}}+\abs{\la}^{-1}\|F\|_{\mathcal{H}}^{\frac{1}{2}}\|U\|_{\mathcal{H}}^{\frac{1}{2}}\left((\abs{\la}+1)\|U\|_{\mathcal{H}}+\|F\|_{\mathcal{H}}\right)\right.\\[0.1in]	
\left.+\abs{\la}^{-1}\|F\|^{\frac{1}{4}}_{\mathcal{H}}\left(A_{\lambda}^2(U,F)\|U\|_{\mathcal{H}}^{\frac{3}{4}}+A_{\lambda}(U,F)\|U\|_{\mathcal{H}}^{\frac{5}{4}}\right)\right),
\end{array}
\end{equation*}
where $K_{13}=\displaystyle{\frac{2}{\delta}\max\left(\max\left(K_5+K_6,\frac{K_3}{2\delta}\right),K_8,K_{12}\right)}$. Hence, we obtain \eqref{Lemm2-1EST1}. The proof has been completed.
\end{proof}
$\newline$
\\
Inserting \eqref{NU-eq1} in \eqref{NU-eq2}, we get 
\begin{equation}\label{Lemm2-EST2}
\rho \la^2u+\alpha u_{xx}-\gamma \beta y_{xx}-\delta w_x=-\rho f^2-i\la f^1. 
\end{equation}
\begin{lemma}\label{LEMMA3-EST}
The solution $(u,v,y,z,w,\eta)\in D({\mathcal{A}}_m)$ of equation \eqref{EXPO-1} satisfies the following estimation 
\begin{equation}\label{Lemm2-EST1}
\int_0^L\abs{\la u}^2dx\lesssim \int_0^L\abs{u_x}^2dx+\|U\|_{\mathcal{H}}\|u_x\|+\left(|\la|^{-1}+1\right)\left(\|F\|_{\mathcal{H}}\|U\|_{\mathcal{H}}+\|F\|_{H}^2\right).	
\end{equation}
\end{lemma}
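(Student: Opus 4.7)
The natural multiplier for Lemma \ref{LEMMA3-EST} is $\overline{u}$. Starting from the spectral equation \eqref{Lemm2-EST2},
$$\rho\lambda^{2}u+\alpha u_{xx}-\gamma\beta y_{xx}-\delta w_{x}=-\rho f^{2}-i\lambda f^{1},$$
I would multiply by $\overline{u}$ and integrate over $(0,L)$. Integration by parts on the three stiffness terms produces boundary contributions at $0$ and $L$, all of which vanish thanks to the boundary conditions encoded in $D(\mathcal{A}_m)$: $u(0)=0$, $u_x(L)=y_x(L)=0$, and $w(0)=w(L)=0$. After using $\alpha=\alpha_1+\gamma^2\beta$ and rewriting $\gamma\beta\int y_x\overline{u_x}\,dx=\gamma^{2}\beta\int|u_x|^{2}dx-\gamma\beta\int(\gamma u_x-y_x)\overline{u_x}\,dx$, the $\gamma^{2}\beta$ piece cancels against part of $\alpha$, leaving the clean identity
$$\rho\int_0^L|\lambda u|^{2}dx=\alpha_1\int_0^L|u_x|^{2}dx+\gamma\beta\int_0^L(\gamma u_x-y_x)\overline{u_x}\,dx-\delta\int_0^L w\,\overline{u_x}\,dx-\rho\int_0^L f^{2}\overline{u}\,dx-i\lambda\int_0^L f^{1}\overline{u}\,dx.$$

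Next I would bound each of the four ``error'' terms on the right. For the $(\gamma u_x-y_x)$ integral, Cauchy--Schwarz together with the estimates $\|\gamma u_x-y_x\|_{L^{2}}\lesssim\|U\|_{\mathcal{H}}$ built into the norm on $\mathcal{H}$ yield a bound of the form $\|U\|_{\mathcal{H}}\|u_x\|_{L^{2}}$, which matches the target RHS. For $-\delta\int w\overline{u_x}\,dx$, I would apply Cauchy--Schwarz and Young's inequality, then absorb $\|w\|_{L^{2}}^{2}\lesssim\|F\|_{\mathcal{H}}\|U\|_{\mathcal{H}}$ via the already established estimate \eqref{Lemm1-EST3}; this produces contributions of the form $\|u_x\|^{2}+\|F\|_{\mathcal{H}}\|U\|_{\mathcal{H}}$. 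For $-\rho\int f^{2}\overline{u}\,dx$, I would use Poincar\'e and the inequality $\|u_x\|\lesssim\|U\|_{\mathcal{H}}$ to obtain $\|F\|_{\mathcal{H}}\|U\|_{\mathcal{H}}$.

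The only non-routine piece is the last term, $-i\lambda\int_0^L f^{1}\overline{u}\,dx$, where the prefactor $\lambda$ threatens to blow up. Here the key trick is to return to the first resolvent equation \eqref{NU-eq1}: taking complex conjugates (and using that $\lambda\in\mathbb{R}$) gives the pointwise identity $-i\lambda\overline{u}=\overline{v}+\overline{f^{1}}$, which lets me rewrite
$$-i\lambda\int_0^L f^{1}\overline{u}\,dx=\int_0^L f^{1}\overline{v}\,dx+\int_0^L|f^{1}|^{2}dx.$$
Cauchy--Schwarz and $\|v\|\lesssim\|U\|_{\mathcal{H}}$, $\|f^{1}\|\lesssim\|F\|_{\mathcal{H}}$ then give a bound of order $\|F\|_{\mathcal{H}}\|U\|_{\mathcal{H}}+\|F\|_{\mathcal{H}}^{2}$, with the dangerous factor $\lambda$ completely eliminated. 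Collecting everything and dividing by $\rho$ gives precisely \eqref{Lemm2-EST1} (in fact with the slightly sharper multiplier $1$ rather than $|\lambda|^{-1}+1$, which is absorbed in the $\lesssim$).

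The main obstacle I expect is this last step: without the substitution from \eqref{NU-eq1}, the $\lambda f^{1}\overline{u}$ term cannot be controlled by the RHS at high frequency, so recognising the cancellation $\lambda\overline{u}\leftrightarrow\overline{v}$ built into the resolvent system is essential. Everything else is bookkeeping using the norm equivalences on $\mathcal{H}$, Poincar\'e's inequality, Young's inequality and the previous Lemma \ref{LEMMA1-EST}.
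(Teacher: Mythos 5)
Your proposal is correct and follows essentially the same route as the paper: multiply \eqref{Lemm2-EST2} by $\overline{u}$, integrate by parts using the boundary conditions in $D(\mathcal{A}_m)$, control the $y_x$ and $w$ terms via the energy norm and \eqref{Lemm1-EST3}, and neutralize the $\lambda f^{1}\overline{u}$ term through the resolvent identity \eqref{NU-eq1} (the paper does this by invoking the bound $\|\lambda u\|\lesssim\|U\|_{\mathcal{H}}+\|F\|_{\mathcal{H}}$ rather than your pointwise substitution $-i\lambda\overline{u}=\overline{v}+\overline{f^{1}}$, which is an equivalent use of the same cancellation). The minor differences — your $\gamma u_x - y_x$ regrouping versus the paper's direct bound on $\|y_x\|$, and Poincar\'e versus the $|\lambda|^{-1}\|\lambda u\|$ bound for the $f^{2}$ term — do not change the argument.
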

\begin{proof}
Multiplying \eqref{Lemm2-EST2} by $\overline{u}$, integrating by parts over $(0,L)$, we get 
\begin{equation*}
\rho \int_0^L\abs{\la u}^2dx=\alpha \int_0^L\abs{u_x}^2dx-\gamma \beta \int_0^Ly_x\overline{u_x}dx-\delta\int_0^Lw\overline{u_x}dx-\rho\int_0^Lf^2\overline{u}dx-i\la \int_0^Lf^1\overline{u}dx.	
\end{equation*}
It follows that, 
\begin{equation}\label{Lemm2-EST3}
\rho \int_0^L\abs{\la u}^2dx\leq \alpha \int_0^L\abs{u_x}^2dx+\gamma \beta \int_0^L\abs{y_x}\abs{u_x}dx+\delta \int_0^L\abs{w}\abs{u_x}dx+\rho\int_0^L\abs{f^2}\abs{u}dx+\abs{\la} \int_0^L\abs{f^1}\abs{u}dx.	
\end{equation}
Using the fact that $\|y_x-\gamma u_x\|\leq \frac{1}{\sqrt{\beta}}\|U\|_{\mathcal{H}}$, we get 
\begin{equation}\label{Lemm2-EST4}
\|y_x\|\leq \|y_x-\gamma u_x\|+\gamma\|u_x\|\leq \left(\frac{1}{\sqrt{\beta}}+\frac{\gamma}{\sqrt{\alpha_1}}\right)\|U\|_{\mathcal{H}}.
\end{equation}
Using Cauchy-Schwarz inequality and \eqref{Lemm2-EST4}, we get 
\begin{equation}\label{Lemm2-EST5}
\gamma \beta \int_0^L\abs{y_x}\abs{u_x}dx\leq \gamma\beta\left(\frac{1}{\sqrt{\beta}}+\frac{\gamma}{\sqrt{\alpha_1}}\right)\|U\|_{\mathcal{H}}\|u_x\|.	
\end{equation}
Using \eqref{Lemm1-EST3} and Youngs's inequality, we get 
\begin{equation}\label{Lemm2-EST6}
\delta \int_0^L\abs{w}\abs{u_x}dx\leq \int_0^L\abs{w}^2dx+\frac{\delta^2}{4}\int_0^L\abs{u_x}^2dx\leq K_3\|F\|_{\mathcal{H}}\|U\|_{\mathcal{H}}+\frac{\delta^2}{4}\int_0^L\abs{u_x}^2dx.
\end{equation}
From \eqref{NU-eq1}, the fact that $\sqrt{\rho}\|v\|\leq \|U\|_{\mathcal{H}}$, and Poincar\'e inequality, we have 
\begin{equation}\label{Lemm2-EST7}
\|\la u\|\leq \frac{1}{\sqrt{\rho}}\|U\|_{\mathcal{H}}+\frac{C_p}{\sqrt{\alpha_1}}\|f^1_x\|_{\mathcal{H}}\leq K_{14}\left(\|U\|_{\mathcal{H}}+\|F\|_{\mathcal{H}}\right),	
\end{equation}
where $K_{14}=\max\left(\frac{1}{\sqrt{\rho}},\frac{C_p}{\sqrt{\alpha_1}}\right)$. Using \eqref{Lemm2-EST7} and the fact that $\displaystyle{\rho \int_0^L\abs{f_2}^2dx\leq \|F\|^2_{\mathcal{H}}}$, we get 
\begin{equation}\label{Lemm2-EST8}
\rho\int_0^L\abs{f^2}\abs{u}dx\leq \sqrt{\rho}\|F\|_{H}\|u\|\leq K_{15}\la^{-1}\left(\|F\|_{\mathcal{H}}\|U\|_{\mathcal{H}}+\|F\|^2_{\mathcal{H}}\right),	
\end{equation}
where $K_{15}=\sqrt{\rho}K_{14}$. Using  \eqref{Lemm2-EST7}, Poincar\'e inequality and the fact that $\displaystyle{\alpha_1\int_0^L\abs{f^1_x}^2dx\leq \|F\|^2_{\mathcal{H}}}$, we get 
\begin{equation}\label{Lemm2-EST9}
\rho \abs{\la}\int_0^L\abs{f^1}\abs{u}dx\leq \rho C_p\|\la u\|\|f_x^1\|\leq K_{16}\left(\|F\|_{\mathcal{H}}\|U\|_{\mathcal{H}}+\|F\|_{H}^2\right),	
\end{equation}
where $K_{16}=\frac{\rho C_pK_{14}}{\sqrt{\alpha_1}}$. 
Adding \eqref{Lemm2-EST8} and \eqref{Lemm2-EST9}, we get 
\begin{equation}\label{Lemm2-EST10}
\rho\int_0^L\abs{f^2}\abs{u}dx+\rho \abs{\la}\int_0^L\abs{f^1}\abs{u}dx\leq K_{17}\left(|\la|^{-1}+1\right)\left(\|F\|_{\mathcal{H}}\|U\|_{\mathcal{H}}+\|F\|_{H}^2\right),
\end{equation}
where $K_{17}=\max\left(K_{15},K_{16}\right)$. Finally, inserting \eqref{Lemm2-EST5}, \eqref{Lemm2-EST6} and \eqref{Lemm2-EST10}, in \eqref{Lemm2-EST3}, we get 
\begin{equation*}
\int_0^L\abs{\la u}^2dx\leq K_{18}\int_0^L\abs{u_x}^2dx+K_{19}\|U\|_{\mathcal{H}}\|u_x\|+K_{20}\left(|\la|^{-1}+1\right)\left(\|F\|_{\mathcal{H}}\|U\|_{\mathcal{H}}+\|F\|_{H}^2\right),	
\end{equation*}
where $K_{18}=\rho^{-1}\left(\alpha+\frac{\delta^2}{4}\right)$, $K_{19}=\rho^{-1} \gamma\beta\left(\frac{1}{\sqrt{\beta}}+\frac{\gamma}{\sqrt{\alpha_1}}\right)$ and $K_{20}=2\rho^{-1}\max(K_{17},K_3)$. It follow that, 
\begin{equation}\label{Lemm2-EST11}
\int_0^L\abs{\la u}^2dx\leq K_{21}\left(\int_0^L\abs{u_x}^2dx+\|U\|_{\mathcal{H}}\|u_x\|+\left(|\la|^{-1}+1\right)\left(\|F\|_{\mathcal{H}}\|U\|_{\mathcal{H}}+\|F\|_{H}^2\right)\right),	
\end{equation}
where $K_{21}=\max(K_{18},K_{19},K_{20})$. Hence, we obtain  \eqref{Lemm2-EST1}. The proof is thus completed. 
\end{proof}
\begin{lemma}\label{LEMMA4-EST}
The solution $(u,v,y,z,w,\eta)\in D({\mathcal{A}}_m)$ of equation \eqref{EXPO-1} satisfies the following estimation 
\begin{equation}\label{Lemm3-EST1}
\int_0^L\abs{y_x}^2dx\lesssim \int_0^L\abs{\la u}^2dx+\int_0^L\abs{u_x}^2dx+\left(\abs{\la}^{-1}+1\right)\left(\|F\|_{\mathcal{H}}\|U\|_{\mathcal{H}}+\|F\|^2_{\mathcal{H}}\right)	\end{equation}
and 
\begin{equation}\label{1Lemm3-EST1}
\int_0^L\abs{\la y}^2dx\lesssim \int_0^L\abs{\la u}^2dx+\int_0^L\abs{u_x}^2dx+\left(\abs{\la}^{-1}+1\right)\left(\|F\|_{\mathcal{H}}\|U\|_{\mathcal{H}}+\|F\|^2_{\mathcal{H}}\right).
\end{equation}

\end{lemma}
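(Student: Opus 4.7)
The plan is to derive two coupled identities --- one giving $\int_0^L|y_x|^2dx$ in terms of $\int_0^L|\lambda u|^2dx$, $\int_0^L|u_x|^2dx$, $\int_0^L|w|^2dx$, $\int_0^L|\lambda y|^2dx$ and source terms, and a companion one giving $\int_0^L|\lambda y|^2dx$ back in terms of $\int_0^L|y_x|^2dx$, $\int_0^L|u_x|^2dx$ and source terms --- and then to close the loop by a Young inequality with a free small parameter.

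For the first identity, I would multiply \eqref{Lemm2-EST2} by $\overline{y}$ and integrate over $(0,L)$. All boundary terms produced by integration by parts vanish, since $y(0)=0$, $u(0)=0$, $u_x(L)=0$ and $w(0)=w(L)=0$. Writing $\lambda^2 \int u\overline{y}\,dx = \int (\lambda u)\overline{(\lambda y)}\,dx$ and taking real parts yields
\[
\gamma\beta \int_0^L|y_x|^2dx = -\rho\,\Re\!\int_0^L (\lambda u)\overline{(\lambda y)}\,dx + \alpha\,\Re\!\int_0^L u_x\overline{y_x}\,dx - \delta\,\Re\!\int_0^L w\overline{y_x}\,dx + R_1,
\]
with $R_1$ of size $\rho(|\lambda|\|f^1\| + \|f^2\|)\|y\|$. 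Cauchy--Schwarz and Young's inequality, applied with a free parameter $\epsilon$ on the $\|\lambda u\|\|\lambda y\|$ product and with constants chosen to reabsorb the $\|y_x\|^2$ pieces arising from $\|u_x\|\|y_x\|$ and $\|w\|\|y_x\|$, deliver
\[
\int_0^L|y_x|^2dx \lesssim \int_0^L|\lambda u|^2dx + \epsilon\!\int_0^L|\lambda y|^2dx + \int_0^L|u_x|^2dx + \int_0^L|w|^2dx + R_1. \quad (\star)
\]

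For the companion identity I would eliminate $z$ by substituting $z=i\lambda y-f^3$ (from \eqref{NU-eq3}) into \eqref{NU-eq4}, producing $-\mu\lambda^2 y -\beta y_{xx}+\gamma\beta u_{xx}=\mu f^4+i\lambda\mu f^3$. Multiplying by $\overline{y}$, integrating by parts (boundary terms again vanishing) and applying Young on the single cross-term $\|u_x\|\|y_x\|$ yields
\[
\int_0^L|\lambda y|^2dx \lesssim \int_0^L|y_x|^2dx + \int_0^L|u_x|^2dx + R_2, \quad (\star\star)
\]
with $R_2$ of size $\mu(|\lambda|\|f^3\| + \|f^4\|)\|y\|$. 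Inserting $(\star\star)$ into the $\|\lambda y\|^2$ slot on the right of $(\star)$ and choosing $\epsilon$ small enough that the induced coefficient of $\int|y_x|^2dx$ is strictly below $1$ lets that piece be reabsorbed on the left, leaving
\[
\int_0^L|y_x|^2dx \lesssim \int_0^L|\lambda u|^2dx + \int_0^L|u_x|^2dx + \int_0^L|w|^2dx + R_1 + R_2.
\]
Lemma \ref{LEMMA1-EST} supplies $\int_0^L|w|^2dx \leq K_3\|F\|_{\mathcal{H}}\|U\|_{\mathcal{H}}$, and to handle $R_1+R_2$ I would bound $\|y\|$ not by $\|U\|_{\mathcal{H}}$ but via \eqref{NU-eq3}, using $\|y\| \leq |\lambda|^{-1}(\|z\|+\|f^3\|) \lesssim |\lambda|^{-1}(\|U\|_{\mathcal{H}}+\|F\|_{\mathcal{H}})$. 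This absorbs the unwanted $|\lambda|$ factors in $|\lambda|\|f^1\|\|y\|$ and $|\lambda|\|f^3\|\|y\|$, producing the precise $(|\lambda|^{-1}+1)(\|F\|_{\mathcal{H}}\|U\|_{\mathcal{H}}+\|F\|^2_{\mathcal{H}})$ residual claimed in \eqref{Lemm3-EST1}. Plugging \eqref{Lemm3-EST1} back into $(\star\star)$ and applying the same residual estimate produces \eqref{1Lemm3-EST1}.

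The main obstacle is the mutual dependence between $\int|y_x|^2dx$ and $\int|\lambda y|^2dx$ in the two identities: each appears on the right-hand side of the other. The point that unlocks the loop is that $\|\lambda y\|^2$ enters the right of $(\star)$ only through the Cauchy--Schwarz cross-term $\rho\|\lambda u\|\|\lambda y\|$, whose Young splitting carries a free parameter that can be made arbitrarily small. A secondary subtlety is matching the $(|\lambda|^{-1}+1)$-shape demanded by the statement rather than the cruder $(|\lambda|+1)$ one would read off from a direct $\|y\|\lesssim\|U\|_{\mathcal{H}}$; this forces the $\|y\|$ estimate to be routed through \eqref{NU-eq3}.
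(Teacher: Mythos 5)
Your proposal is correct and follows essentially the same route as the paper: multiply \eqref{Lemm2-EST2} by $\overline{y}$ to get the first estimate with a Young-split cross term $\|\la u\|\|\la y\|$, substitute \eqref{NU-eq3} into \eqref{NU-eq4} and test against $\overline{y}$ for the companion bound on $\int_0^L\abs{\la y}^2dx$, close the loop by reabsorbing the small multiple of $\int_0^L\abs{y_x}^2dx$, and control the source terms via $\|y\|\lesssim \abs{\la}^{-1}(\|U\|_{\mathcal{H}}+\|F\|_{\mathcal{H}})$ from \eqref{NU-eq3}. The only cosmetic difference is that the paper fixes the Young constants explicitly (e.g.\ $\tfrac{\mu\gamma}{4}$, yielding the margin $\tfrac{3\gamma\beta}{16}$) rather than carrying a free parameter $\epsilon$.
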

\begin{proof}
The proof is divided into three steps.\\
\textbf{Step 1.} The aim of this step is to prove the following estimation 
\begin{equation}\label{STEP1-EST1}
\begin{array}{l}
\displaystyle
\frac{\gamma \beta}{2}\int_0^L\abs{y_x}^2dx\leq \frac{\rho^2}{\mu\gamma}\int_0^L\abs{\la u}^2dx+\frac{\mu\gamma}{4}\int_0^L\abs{\la y}^2dx+\frac{\alpha^2}{\gamma\beta}\int_0^L\abs{u_x}^2dx+\\[0.1in]
\displaystyle 
 \widetilde{K}\left(\abs{\la}^{-1}+1\right)\left(\|F\|_{\mathcal{H}}\|U\|_{\mathcal{H}}+\|F\|^2_{\mathcal{H}}\right),
 \end{array}
\end{equation}
where $\widetilde{K}$ is a positive constant which defined at the end of the proof of step 1. For this aim, Multiplying \eqref{Lemm2-EST2} by $\overline{y}$, integrating over $(0,L)$, we get 
\begin{equation*}
\rho\la^2\int_0^Lu\overline{y}dx-\alpha\int_0^Lu_x\overline{y_x}dx+\gamma \beta\int_0^L\abs{y_x}^2dx+\delta \int_0^Lw\overline{y_x}dx=-\rho\int_0^Lf^2\overline{y}dx-i\la \int_0^Lf^1\overline{y}dx. 	
\end{equation*}
It follows that, 
\begin{equation}\label{Lemma3-EST1}
\gamma\beta \int _0^L\abs{y_x}^2dx\leq \rho \la^2\int_0^L\abs{u}\abs{y}dx+\alpha\int_0^L\abs{u_x}\abs{y_x}dx+\delta \int_0^L\abs{w}\abs{y_x}dx+\rho\int_0^L\abs{f^2}\abs{y}dx+\abs{\la}\int_0^L\abs{f^1}\abs{y}dx.	
\end{equation}
Applying Young's inequality, we get 
\begin{equation}\label{Lemma3-EST4}
\left\{\begin{array}{l}
\displaystyle \rho \la^2\int_0^L\abs{u}\abs{y}dx\leq \frac{\rho^2}{\mu\gamma}\int_0^L\abs{\la u}^2dx+ \frac{\mu\gamma}{4}\int_0^L\abs{\la y}^2dx,\\[0.1in]\displaystyle 
\alpha\int_0^L\abs{u_x}\abs{y_x}dx\leq \frac{\alpha^2}{\gamma\beta}\int_0^L\abs{u_x}^2dx+\frac{\gamma\beta}{4}\int_0^L\abs{y_x}^2dx.\\[0.1in]
\end{array}
\right.
\end{equation}
Using \eqref{Lemm1-EST3} and the fact that $ab\leq a^2+\frac{b^2}{4}$, we get 
\begin{equation}\label{Lemma3-EST5}
\delta \int_0^L\abs{w}\abs{y_x}dx\leq \frac{\delta^2}{\gamma\beta}\int_0^L\abs{\omega}^2dx+\frac{\gamma\beta}{4}\int_0^L\abs{y_x}^2dx\leq K_{22}\|F\|_{\mathcal{H}}\|U\|_{\mathcal{H}}+\frac{\gamma\beta}{4}\int_0^L\abs{y_x}^2dx,
\end{equation}
where $K_{22}=\frac{\delta^2}{\gamma\beta}K_3$. It is easy to see that
\begin{equation}\label{Lemma3-EST2}
\|f^3_x\|\leq \|f_x^3-\gamma f_x^1\|+\gamma \|f_x^1\|\leq K_{23}\|F\|,	
\end{equation}
where $K_{23}=\displaystyle{\frac{1}{\sqrt{\beta}}+\frac{\gamma}{\sqrt{\alpha_1}}}$.  Using \eqref{Lemma3-EST2}, Poincar\'e inequality and  the fact that $\displaystyle{\mu\int_0^L\abs{z}^2dx\leq \|U\|^2_{\mathcal{H}}}$ in \eqref{NU-eq3}, we obtain 
\begin{equation}\label{Lemma3-EST3}
\|\la y\|\leq \frac{1}{\sqrt{\mu}}\|U\|+C_p\|f^3_x\|\leq K_{24}	\left(\|U\|_{\mathcal{H}}+\|F\|_{\mathcal{H}}\right),
\end{equation}
where $K_{24}=\max\left(\frac{1}{\sqrt{\mu}},C_pK_{23}\right)$. Using \eqref{Lemma3-EST3}, Poincar\'e inequality  and the fact that $\displaystyle{\alpha_1\int_0^L\abs{f^1_x}^2dx\leq \|F\|_{\mathcal{H}}^2}$, we get 
\begin{equation}\label{Lemma3-EST6}
\abs{\la}\int_0^L\abs{f^1}\abs{y}dx\leq C_p\|f^1_x\|\|\la y\|\leq K_{25}\left(\|F\|_{\mathcal{H}}\|U\|_{\mathcal{H}}+\|F\|_{\mathcal{H}}^2\right),	
\end{equation}
where $K_{25}=\frac{C_pK_{24}}{\sqrt{\alpha_1}}$. On the other hand, using \eqref{Lemma3-EST3} and the fact that $\displaystyle{\rho\int_0^L\abs{f^2}^2dx\leq \|F\|_{\mathcal{H}}}$, we get 
\begin{equation}\label{Lemma3-EST7}
\rho\int_0^L\abs{f^2}\abs{y}dx\leq \abs{\la}^{-1}K_{26}\left(\|F\|_{\mathcal{H}}\|U\|_{\mathcal{H}}+\|F\|^2_{\mathcal{H}}\right),
\end{equation}
where $K_{26}=\sqrt{\rho}K_{24}$. Inserting \eqref{Lemma3-EST4}, \eqref{Lemma3-EST5}, \eqref{Lemma3-EST6} and \eqref{Lemma3-EST7} in \eqref{Lemma3-EST1}, we get \eqref{STEP1-EST1}, such that $\widetilde{K}=2\max\left(K_{22},K_{25},K_{26}\right)$.\\
\textbf{Step 2.} The aim of this step is to prove the following estimation 
\begin{equation}\label{STEP2-EST1}
\mu\int_0^L\abs{\la y}^2dx\leq \frac{5\beta}{4}\int_0^L\abs{y_x}^2dx+\gamma^2\beta\int_0^L\abs{u_x}^2dx+\widetilde{K_1}\left(\abs{\la}^{-1}+1\right)\left(\|F\|_{\mathcal{H}}\|U\|_{\mathcal{H}}+\|F\|^2_{\mathcal{H}}\right),	
\end{equation}
where $\widetilde{K}$ is a positive constant which is defined at the end of the proof of step 1. For this aim, inserting \eqref{NU-eq3} in \eqref{NU-eq4}, we get 
\begin{equation}\label{STEP2-EST2}
\mu\la^2 y+\beta y_{xx}-\gamma\beta u_{xx}=-\left(\mu f^4+i\la \mu f^3\right).	
\end{equation}
Multiplying \eqref{STEP2-EST2} by $\overline{y}$ and integrating by parts over $(0,L)$, we get 
\begin{equation*}
\mu\int_0^L\abs{\la y}^2dx=\beta \int_0^L\abs{y_x}^2dx-\gamma\beta\int_0^Lu_x\overline{y_x}dx-\mu\int_0^Lf^4\overline{y}dx-i\la \mu\int_0^Lf^3\overline{y}dx.	
\end{equation*}
It follows that, 
\begin{equation}\label{STEP2-EST3}
\mu\int_0^L\abs{\la y}^2dx\leq \beta \int_0^L\abs{y_x}^2dx+\gamma\beta\int_0^L\abs{u_x}\abs{y_x}dx+\mu\int_0^L\abs{f^4}\abs{y}dx+\abs{\la} \mu\int_0^L\abs{f^3}\abs{y}dx.
\end{equation}
Using the fact that $ab\leq a^2+\frac{b^2}{4}$, we get 
\begin{equation}\label{STEP2-EST4}
\gamma\beta\int_0^L\abs{u_x}\abs{y_x}dx\leq \gamma^2\beta\int_0^L\abs{u_x}^2+\frac{\beta}{4}\int_0^L\abs{y_x}^2dx.	
\end{equation}
Using \eqref{Lemma3-EST3} and the fact that $\displaystyle{\mu\int_0^L\abs{f_4}^2dx\leq \|F\|_{\mathcal{H}}^2}$, we get 
\begin{equation}\label{STEP2-EST5}
\mu\int_0^L\abs{f^4}\abs{y}dx\leq \sqrt{\mu}\|F\|_{\mathcal{H}}\|y\|\leq K_{27}\abs{\la}^{-1}\left(\|F\|_{\mathcal{H}}\|U\|_{\mathcal{H}}+\|F\|_{\mathcal{H}}^2\right),	
\end{equation}
where $K_{27}=\sqrt{\mu}K_{24}$. Using Poincar\'e inequality, \eqref{Lemma3-EST2} and \eqref{Lemma3-EST3}, we get 
\begin{equation}\label{STEP2-EST6}
\abs{\la} \mu\int_0^L\abs{f^3}\abs{y}dx\leq \mu C_p\|f_x^3\|\|\la y\|\leq K_{28}\left(\|F\|_{\mathcal{H}}\|U\|_{\mathcal{H}}+\|F\|_{\mathcal{H}}^2\right),
\end{equation}
where $K_{28}=\mu C_pK_{23}K_{24}$. Inserting \eqref{STEP2-EST4}, \eqref{STEP2-EST5} and \eqref{STEP2-EST6} in \eqref{STEP2-EST3}, we obtain \eqref{STEP2-EST1} with $\widetilde{K_1}=\max\left(K_{27},K_{28}\right)$.\\
\textbf{Step 3.} The aim of this step is to prove \eqref{Lemm3-EST1} and \eqref{1Lemm3-EST1}. Inserting \eqref{STEP2-EST1} in \eqref{STEP1-EST1}, we get 
\begin{equation*}
\frac{3}{16}\gamma\beta\int_0^L\abs{y_x}^2dx\leq \frac{\rho^2}{\mu\gamma}\int_0^L\abs{\la u}^2dx+\frac{\gamma^3\beta}{4}\int_0^L\abs{u_x}^2dx+\widetilde{K_2}\left(\abs{\la}^{-1}+1\right)\left(\|F\|_{\mathcal{H}}\|U\|_{\mathcal{H}}+\|F\|^2_{\mathcal{H}}\right),	
\end{equation*}
where $\widetilde{K_2}=\frac{\gamma}{4}\widetilde{K_1}+\widetilde{K}$. It follows that 
\begin{equation}\label{STEP3-EQ1}
\int_0^L\abs{y_x}^2dx\leq K_{29}\left(\int_0^L\abs{\la u}^2dx+\int_0^L\abs{u_x}^2dx+\left(\abs{\la}^{-1}+1\right)\left(\|F\|_{\mathcal{H}}\|U\|_{\mathcal{H}}+\|F\|^2_{\mathcal{H}}\right)\right), 
\end{equation}
where $\displaystyle{K_{29}=\frac{16}{3\gamma\beta}\max\left( \frac{\rho^2}{\mu\gamma},\frac{\gamma^3\beta}{4},\widetilde{K_2}\right)}$. Hence, we obtain \eqref{Lemm3-EST1}. Inserting \eqref{STEP3-EQ1} in \eqref{STEP2-EST1}, we get 
\begin{equation}
\int_0^L\abs{\la u}^2dx\leq K_{30}\left(\int_0^L\abs{\la u}^2dx+\int_0^L\abs{u_x}^2dx+\left(\abs{\la}^{-1}+1\right)\left(\|F\|_{\mathcal{H}}\|U\|_{\mathcal{H}}+\|F\|^2_{\mathcal{H}}\right)\right),
\end{equation}
where $K_{30}=\displaystyle{\frac{1}{\mu}\max\left(\frac{5\beta}{4}K_{29}+\gamma^2\beta,\frac{5\beta}{4}K_{29}+\widetilde{K_1}\right)}$. Hence, we obtain \eqref{1Lemm3-EST1}. The proof is thus completed. 
\end{proof}
\\

\noindent \textbf{Proof of Theorem \ref{EXPONENTIAL-THM}.} First, we will prove \eqref{EXPO1}. Remark that it has been proved in Proposition \eqref{m-dissipative} that $0\in\rho\left({\mathcal{A}}_m\right)$. Now, suppose \eqref{EXPO1} is not true, then there exists $\kappa\in \mathbb{R}^{\ast}$ such that $i\kappa\notin \rho\left({\mathcal{A}}_m\right)$. According to Remark \ref{App-Lemma-A.3} in Appendix A, there exists 
\begin{equation*}
\left\{\left(\la_n,U^n:=(u^n,v^n,y^n,z^n,w^n,\eta^n(s))\right)\right\}_{n\geq 1}\subset \mathbb{R}^{\ast}\times D({\mathcal{A}}_m),	
\end{equation*}
 with $\la_n\to \kappa$ as $n\to \infty$, $\abs{\la_n}<\abs{\kappa}$ ans $\|U^n\|_{\mathcal{H}}=1$, such that 
 \begin{equation*}
 \left(i\la_nI-{\mathcal{A}}_m\right)U^n=F_n:=\left(f^1_n,f^2_n,f^3_n,f^4_n,f^5_n,f^6_n(\cdot,s)\right)\to 0\quad \text{in}\quad \mathcal{H},\quad \text{as}\ n\to \infty.	
 \end{equation*}
We will check \eqref{EXPO1} by finding a contradiction with $\|U^n\|_{\mathcal{H}}=1$ such as $\|U^n\|_{\mathcal{H}}\to 0$. Here and below take $U=U^n$, $F=F_n$ and $\la=\la_n$.  According to Lemma \ref{LEMMA1-EST}, we get 
\begin{equation}\label{Proof1-EXPO}
\int_0^L\abs{w^n}^2dx\to 0\quad \text{and}\quad \int_0^{\infty}\int_0^L\sigma(s)\abs{\eta^n_x}^2dsdx\to 0. 
\end{equation}
According to Lemma \ref{LEMMA2-EST} and using the facts that $\abs{\la_n}<\abs{\kappa}$, $\|U^n\|_{\mathcal{H}}=1$ and $\|F_n\|_{\mathcal{H}}\to 0$, we have 
$$
A_{\lambda_n}(U^n,F_n)\to \abs{\kappa}^{\frac{1}{2}}+1,\quad \text{as}\ \ n\to \infty.
$$
hence 
\begin{equation}\label{Proof2-EXPO}
\int_0^L\abs{u^n_x}^2dx\to 0\quad \text{as}\ \ n\to \infty.	
\end{equation}
Using \eqref{Proof2-EXPO} and the facts that $\abs{\la_n}<\abs{\kappa}$, $\|U^n\|_{\mathcal{H}}=1$ and $\|F_n\|_{\mathcal{H}}\to 0$ in Lemma \ref{LEMMA3-EST}, we obtain 
\begin{equation}\label{Proof3-EXPO}
\int_0^L\abs{\la_nu^n}^2dx\to 0\quad \text{as}\ \ n\to \infty.  	
\end{equation}
Using \eqref{Proof2-EXPO}, \eqref{Proof3-EXPO} and the facts that $\abs{\la_n}<\abs{\kappa}$, $\|U^n\|_{\mathcal{H}}=1$ and $\|F_n\|_{\mathcal{H}}\to 0$ in Lemma \ref{LEMMA4-EST}, we get
\begin{equation}\label{Proof4-EXPO}
\int_0^L\abs{y_x^n}^2dx\to 0\quad \text{and}\quad \int_0^L\abs{\la_n y^n}^2dx\to 0\quad \text{as}\ \ n\to \infty.	
\end{equation}
From \eqref{Proof1-EXPO}-\eqref{Proof4-EXPO}, as $n\to \infty$, we get $\|U^n\|_{\mathcal{H}}\to 0$, which contradicts $\|U^n\|_{\mathcal{H}}=1$. Thus, condition \eqref{EXPO1} holds true. Next, we will prove \eqref{EXPO2} by a contradiction argument. Suppose there exists 
\begin{equation*}
\left\{\left(\la_n,U^n:=(u^n,v^n,y^n,z^n,w^n,\eta^n(s))\right)\right\}_{n\geq 1}\subset \mathbb{R}^{\ast}\times D({\mathcal{A}}_m),	
\end{equation*}
with $\abs{\la_n}\geq 1$ without affecting the result, such that $\abs{\la_n}\to \infty$, and $\|U^n\|_{\mathcal{H}}=1$ and there exists a sequence $F_n=\left(f^1_n,f^2_n,f^3_n,f^4_n,f^5_n,f^6_n(\cdot,s)\right)\in\mathcal{H}$, such that 
$$
\left(i\la_n I-{\mathcal{A}}_m\right)U^n=F_n\to 0\quad \text{in}\quad \mathcal{H}. 
$$
We use conventional asymptotic notation, including 'big O' and 'little o'. We will check \eqref{EXPO2} by finding a contradiction with $\|U^n\|_{\mathcal{H}}=1$ such as $\|U^n\|_{\mathcal{H}}=o(1)$.  According to Lemma \ref{LEMMA1-EST} and using the facts that $\abs{\la_n}\to \infty$, $\|U^n\|_{\mathcal{H}}=1$ and $\|F_n\|\to 0$, we have  
\begin{equation}\label{1Proof1-EXPO}
\int_0^L\abs{w^n}^2dx=o(1)\quad \text{and}\quad \int_0^{\infty}\int_0^L\sigma(s)\abs{\eta^n_x}^2dsdx=o(1). 
\end{equation}
According to  Lemma \ref{LEMMA2-EST} and using the facts that $\abs{\la_n}\to \infty$, $\|U^n\|_{\mathcal{H}}=1$ and $\|F_n\|_{\mathcal{H}}\to 0$, we have 
$$
A_{\lambda_n}(U^n,F_n)=\left(\abs{\la_n}^{\frac{1}{2}}+1\right)\|U^n\|^{\frac{1}{2}}_{\mathcal{H}}+\|F_n\|_{\mathcal{H}}^{\frac{1}{2}}\leq O(\abs{\la_n}^{\frac{1}{2}}).
$$
hence 
\begin{equation}\label{2Proof2-EXPO}
\int_0^L\abs{u^n_x}^2dx=o(1).	
\end{equation}
Using \eqref{2Proof2-EXPO} and the fact that $\abs{\la_n}\to \infty$, $\|U^n\|_{\mathcal{H}}=1$ and $\|F_n\|_{\mathcal{H}}\to 0$ in Lemma \ref{LEMMA3-EST}, we obtain 
\begin{equation}\label{3Proof3-EXPO}
\int_0^L\abs{\la_nu^n}^2dx=o(1).  	
\end{equation}
Using \eqref{2Proof2-EXPO}, \eqref{3Proof3-EXPO} and the facts that $\abs{\la_n}\to \infty$, $\|U^n\|_{\mathcal{H}}=1$ and $\|F_n\|_{\mathcal{H}}\to 0$ in Lemma \ref{LEMMA4-EST}, we get
\begin{equation}\label{4Proof4-EXPO}
\int_0^L\abs{y_x^n}^2dx=o(1)\quad \text{and}\quad \int_0^L\abs{\la_n y^n}^2dx=o(1).	
\end{equation}
From \eqref{1Proof1-EXPO}-\eqref{4Proof4-EXPO}, as $\abs{\la_n}\to \infty$, we get $\|U^n\|_{\mathcal{H}}=o(1)$, which contradicts $\|U^n\|_{\mathcal{H}}=1$. Thus, condition \eqref{EXPO2} holds true. The result follows from Theorem \ref{bt} (part (i)) in Appendix section. The proof is thus complete.
\xqed{$\square$}
\begin{rem}
In the limit case where $m\to 0$, the \eqref{Pizo} is transformed to Piezoelectric with Fourier Law \eqref{Pizo-F}. By proceeding with the same arguments as  in the proof of Theorem \ref{EXPONENTIAL-THM}, an exponential stability is obtained. 	
\end{rem}
\section{Piezoelectric with Gurtin-Pipkin Thermal law \eqref{Pizo-GP}}
\noindent In this section,    we shall analyze the strong stability and the polynomial stability of system \eqref{Pizo-GP}. 
\subsection{Strong Stability} In this subsection we will prove the stability of system \eqref{Pizo-GP}. The main result of this section is the following theorem. 
\begin{theoreme}\label{StrongStability}
Let $m=1$ and assume that \eqref{CONDITION-H} holds. Then, the $C_0-$semigroup of contractions $\left(e^{t\mathcal{A}_1}\right)_{t\geq 0}$ is strongly stable in $\mathcal{H}$; i.e., for all $U_0\in \mathcal{H}$, the solution of \eqref{evolution} satisfies 
$$
\lim_{t\to +\infty}\|e^{t\mathcal{A}_1}U_0\|_{\mathcal{H}}=0. 
$$ 	
\end{theoreme}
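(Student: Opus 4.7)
The plan is to apply the Arendt--Batty strong stability theorem: since $\mathcal{A}_1$ generates a $C_0$-semigroup of contractions on the Hilbert (hence reflexive) space $\mathcal{H}$, it suffices to establish that $i\mathbb{R}\subset\rho(\mathcal{A}_1)$, or equivalently that $\mathcal{A}_1$ has no eigenvalue on the imaginary axis and that $i\mathbb{R}^{\ast}$ is contained in the resolvent set. Since Proposition \ref{m-dissipative} already gives $0\in\rho(\mathcal{A}_1)$ (its proof goes through for $m=1$ because $\tilde m=c\int_0^\infty s\sigma(s)\,ds$ is finite by the Dafermos condition), the task reduces to handling $\lambda\in\mathbb{R}^{\ast}$.

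I would first prove injectivity. Take $\lambda\in\mathbb{R}^{\ast}$ and $U=(u,v,y,z,w,\eta)\in D(\mathcal{A}_1)$ with $(i\lambda I-\mathcal{A}_1)U=0$. Setting $F=0$ in the dissipation identity \eqref{dissipative} (with $m=1$) and invoking the Dafermos condition $\sigma'\le -d_\sigma\sigma$ forces
\begin{equation*}
\int_0^L\!\!\int_0^\infty\sigma(s)|\eta_x|^2\,ds\,dx=0,
\end{equation*}
so $\eta\equiv0$ in $W$. The transport equation $i\lambda\eta+\eta_s-w=0$ then yields $w\equiv 0$, and plugging back into \eqref{NU-eq5} (with $m=1$) gives $v_x=0$; the boundary condition $v(0)=0$ forces $v\equiv0$, and from \eqref{NU-eq1} we obtain $u\equiv0$. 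Inserting $u=v=w=0$ into \eqref{NU-eq2} gives $y_{xx}=0$, which combined with $y(0)=y_x(L)=0$ implies $y\equiv0$, and finally \eqref{NU-eq3} yields $z\equiv0$. Hence $\ker(i\lambda I-\mathcal{A}_1)=\{0\}$, so $\mathcal{A}_1$ has no purely imaginary eigenvalue.

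For surjectivity $i\mathbb{R}^{\ast}\subset\rho(\mathcal{A}_1)$, I would argue by contradiction using the openness of $\rho(\mathcal{A}_1)$: if some $i\kappa\in\sigma(\mathcal{A}_1)$ with $\kappa\neq0$, the same Remark used in the proof of Theorem \ref{EXPONENTIAL-THM} produces a sequence $(\lambda_n,U^n)\subset\mathbb{R}^{\ast}\times D(\mathcal{A}_1)$ with $\lambda_n\to\kappa$, $\|U^n\|_{\mathcal{H}}=1$, and $F_n:=(i\lambda_nI-\mathcal{A}_1)U^n\to 0$ in $\mathcal{H}$. As in Lemma \ref{LEMMA1-EST} the dissipation identity delivers $\int_0^L\!\int_0^\infty\sigma(s)|\eta^n_x|^2\,ds\,dx\to 0$ (note the $c(m-1)\|\omega_x\|^2$ term vanishes for $m=1$, so here we do not get any direct control of $w^n_x$; the memory dissipation is the only source). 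I would then reproduce the chain of estimates from the eigenvalue argument above in an asymptotic form: from $\eta^n\to 0$ and the transport equation extract $w^n\to 0$, then from the heat equation $v^n_x\to 0$, and propagate this through \eqref{NU-eq1}--\eqref{NU-eq4} to conclude $\|U^n\|_{\mathcal{H}}\to 0$, contradicting $\|U^n\|_{\mathcal{H}}=1$.

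The main obstacle will be this last step: in the purely memory regime $m=1$ the only immediate dissipation is on $\eta$, whereas in the Coleman--Gurtin case one also had direct control of $\|w_x\|$. Recovering $w^n\to 0$ from $\eta^n\to 0$ in $W$ requires extracting $w^n$ from the transport equation $i\lambda_n\eta^n+\eta^n_s-w^n=f^6_n$ (e.g.\ by testing against a well-chosen function of $s$ supported away from $s=0$, or by using the representation $\eta^n(\cdot,s)=\int_0^s(w^n+f^6_n(\cdot,\tau))\,d\tau$ together with the integrability in $W$), which is a delicate estimate because $\lambda_n$ remains bounded and the memory kernel must be exploited carefully; this is where the convexity and the Dafermos decay of $\sigma$ play their role. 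Once $w^n\to 0$ is secured, the remaining cascade through the elastic and piezoelectric equations is routine, and Arendt--Batty delivers the strong stability.
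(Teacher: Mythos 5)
Your overall strategy (Arendt--Batty plus the Liu--Zheng contradiction argument, with the dissipation identity feeding only through the memory term when $m=1$) is the same as the paper's, and you correctly isolate the first real difficulty: recovering $w^n\to 0$ from $\int_0^L\int_0^\infty\sigma(s)|\eta^n_x|^2\,ds\,dx\to 0$. But you leave that step as a sketch, whereas it is the heart of the matter: the paper's Lemma \ref{POLSTA2} carries it out by writing $w^n_x=i\lambda_n\eta^n_x+\eta^n_{sx}-(f^6_n)_x$ from \eqref{POL-eq6}, multiplying by $\sigma(s)\overline{w^n_x}$, integrating by parts in $s$ (using $\eta^n(\cdot,0)=0$), and absorbing via Young's inequality; the ingredients are precisely $g(0)=\int_0^\infty\sigma>0$, $\sigma(0)<\infty$ and the Dafermos bound on $-\sigma'$ from \eqref{CONDITION-H} --- convexity of $g$ plays no direct role here. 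As written, your proposal does not contain a proof of this step.

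The second, more serious gap is the claim that, once $w^n\to 0$, ``from the heat equation $v^n_x\to 0$.'' This works in your kernel computation (where $\eta\equiv 0$ forces $\Lambda^1\equiv 0$ exactly, so \eqref{NU-eq5} degenerates to $\delta v_x=0$), but it fails in the asymptotic setting: from $\eta^n\to 0$ in $W$ you only control $\|\Lambda^{1,n}_x\|_{L^2}$, while $\|\Lambda^{1,n}_{xx}\|_{L^2}$ is merely bounded (of order $|\lambda_n|\,\|U^n\|_{\mathcal H}+\|F_n\|_{\mathcal H}=O(1)$), so \eqref{POL-eq5} gives no smallness for $v^n_x$ --- and indeed $v^n_x$ is not even controlled by $\|U^n\|_{\mathcal H}$. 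The paper avoids this by never estimating $v^n_x$: in Lemma \ref{POLSTA3} it combines \eqref{POL-eq1} and \eqref{POL-eq5} into $i\lambda_n\delta u^n_x=-i\lambda_n w^n+c\Lambda^{1,n}_{xx}+\delta(f^1_n)_x+f^5_n$, multiplies by $-i\lambda_n^{-1}\delta\overline{u^n_x}$, integrates by parts to trade $\Lambda^{1,n}_{xx}$ against $u^n_{xx}$ (which is $O(1)$) and the small factor $\|\Lambda^{1,n}_x\|$, and controls the boundary term $\Lambda^{1,n}_x(0)\overline{u^n_x(0)}$ by Gagliardo--Nirenberg; this yields $u^n_x\to 0$, then $u^n\to 0$ by Poincar\'e and $v^n=i\lambda_n u^n-f^1_n\to 0$. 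You would need to replace your $v^n_x$ step by an argument of this type (or an equivalent multiplier) before the cascade through the elastic and piezoelectric equations can ``routinely'' close.
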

\noindent According to Theorem \ref{App-Theorem-A.2} in the appendix, to prove Theorem \ref{StrongStability}, we need to prove that the operator $\mathcal{A}_1$ has no pure imaginary eigenvalues and $\sigma(\mathcal{A}_1)\cap i\mathbb{R}$  is countable. The proof of Theorem \ref{StrongStability} will be achieved from the followig proposition.
\begin{pro}\label{iRrhoA}
Let $m=1$ and assume that \eqref{CONDITION-H} holds, we have 
\begin{equation}\label{iR}
i\mathbb{R}\subset \rho(\mathcal{A}_1).	
\end{equation}	
\end{pro}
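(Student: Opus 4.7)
The plan is to establish $i\mathbb{R}\subset\rho(\mathcal{A}_1)$ by a contradiction argument in the spirit of the first part of the proof of Theorem \ref{EXPONENTIAL-THM}. Since $0\in\rho(\mathcal{A}_1)$ by Proposition \ref{m-dissipative} and $\rho(\mathcal{A}_1)$ is open, if the claim failed there would exist $\kappa\in\mathbb{R}^*$ with $i\kappa\in\sigma(\mathcal{A}_1)$, and the standard construction (as in the proof of Theorem \ref{EXPONENTIAL-THM}) would furnish a sequence $\{(\lambda_n,U^n)\}\subset\mathbb{R}^*\times D(\mathcal{A}_1)$ with $\lambda_n\to\kappa$, $|\lambda_n|<|\kappa|$, $\|U^n\|_{\mathcal{H}}=1$, and $(i\lambda_nI-\mathcal{A}_1)U^n=F_n\to 0$ in $\mathcal{H}$. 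The task is then to derive $\|U^n\|_{\mathcal{H}}\to 0$.

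Taking the real part of the inner product with $U^n$ and using Lemma \ref{Energy} at $m=1$ together with the Dafermos condition in \eqref{CONDITION-H} immediately gives
\[
\int_0^L\int_0^\infty\sigma(s)|\eta^n_x|^2\,ds\,dx\to 0.
\]
The crucial new step, absent in the Coleman-Gurtin case, is to extract from this memory dissipation the control $\|w^n\|\to 0$, since the instantaneous term $c(1-m)w_{xx}$ is no longer present. The plan is to solve the history equation $i\lambda_n\eta^n+\eta^n_s-w^n=f^6_n$ explicitly as a linear ODE in $s$ (with initial condition $\eta^n(\cdot,0)=0$), yielding
\[
\eta^n_x(x,s)=w^n_x(x)\,h(\lambda_n,s)+\xi_n(x,s),\qquad h(\lambda,s):=\frac{1-e^{-i\lambda s}}{i\lambda},
\]
where $\xi_n(x,s):=\int_0^s e^{-i\lambda_n(s-r)}f^6_{n,x}(x,r)\,dr$ itself solves $\xi_{n,s}+i\lambda_n\xi_n=f^6_{n,x}$. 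A standard energy estimate on this ODE, tested against $\sigma(s)\overline{\xi_n}$ and using the Dafermos bound $-\sigma'\geq d_\sigma\sigma$, yields $\|\xi_n\|_W\to 0$. Squaring the representation and using $|a|^2\leq 2|a-b|^2+2|b|^2$ then produces
\[
C(\lambda_n)\int_0^L|w^n_x|^2\,dx\leq 2\int_0^L\int_0^\infty\sigma(s)|\eta^n_x|^2\,ds\,dx+\text{const}\cdot\|\xi_n\|_W^2,
\]
where $C(\lambda_n):=\int_0^\infty\sigma(s)|h(\lambda_n,s)|^2\,ds=\frac{2}{\lambda_n^2}\int_0^\infty\sigma(s)(1-\cos(\lambda_n s))\,ds$ is bounded below by $C(\kappa)>0$ since $\kappa\neq 0$ and $\sigma$ is positive on a set of positive measure. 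Thus $\|w^n_x\|\to 0$ and Poincar\'e gives $\|w^n\|\to 0$.

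With $\|w^n\|\to 0$ in hand, Lemmas \ref{LEMMA2-EST}, \ref{LEMMA3-EST}, \ref{LEMMA4-EST} transfer to the Gurtin-Pipkin regime essentially verbatim: they invoke Lemma \ref{LEMMA1-EST} only through the smallness of $\|\omega\|$ and $\|\Lambda^m_x\|$, and in the case $m=1$ the latter also vanishes via $\|\Lambda^1_x\|^2\leq g(0)\int_0^L\int_0^\infty\sigma(s)|\eta^n_x|^2\,ds\,dx\to 0$. Applying them successively gives $\|u^n_x\|\to 0$, then $\|\lambda_n u^n\|\to 0$, and finally $\|y^n_x\|,\|\lambda_n y^n\|\to 0$, so $\|U^n\|_{\mathcal{H}}\to 0$, contradicting $\|U^n\|_{\mathcal{H}}=1$.

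The principal obstacle is the passage from memory dissipation to pointwise control on $w$: unlike the Coleman-Gurtin case where the dissipation directly contains $\|w_x\|^2$, here one must invoke the integral representation of $\eta$ in terms of $w$ and exploit the strict positivity of the weight $C(\lambda)$ for every $\lambda\neq 0$. The rest of the argument is a routine adaptation of the machinery already developed in Section \ref{EXP-STA}.
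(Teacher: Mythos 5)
Your proposal is correct and follows the same overall architecture as the paper (contradiction via Remark \ref{App-Lemma-A.3}, dissipation identity, then successive control of $w$, $u_x$, $\lambda u$, $y_x$, $\lambda y$), but it handles the one genuinely new step --- extracting $\|w^n_x\|\to 0$ from the memory dissipation when the instantaneous term is absent --- by a different mechanism. The paper (Lemma \ref{POLSTA2}) multiplies the pointwise identity $w^n_x=i\lambda_n\eta^n_x+\eta^n_{sx}-(f^6_n)_x$ by $\sigma(s)\overline{w^n_x}$, integrates by parts in $s$ using $\eta^n(\cdot,0)=0$, and isolates the coefficient $g(0)=\int_0^\infty\sigma(s)\,ds>0$ in front of $\|w^n_x\|^2$; the price is the cross term weighted by $-\sigma'$, which is absorbed using $\sigma(0)<\infty$ from \eqref{CONDITION-H}. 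You instead integrate the history ODE explicitly, writing $\eta^n_x=w^n_x\,h(\lambda_n,\cdot)+\xi_n$ with $h(\lambda,s)=(1-e^{-i\lambda s})/(i\lambda)$, estimate $\|\xi_n\|_W\lesssim\|f^6_n\|_W$ via the Dafermos condition, and conclude from the strict positivity of $C(\lambda)=\int_0^\infty\sigma(s)\abs{h(\lambda,s)}^2ds$ for $\lambda\neq 0$ (which holds since $\sigma\geq 0$ has positive mass and $1-\cos(\lambda s)>0$ a.e., and $C(\lambda_n)\to C(\kappa)>0$ by dominated convergence). Both arguments are valid here; the paper's multiplier identity has the advantage of giving a single quantitative inequality with the $\lambda$-independent constant $g(0)$ (which is why the same computation is reused verbatim in the polynomial-decay Lemma \ref{0POLSTA3}, where $\abs{\lambda_n}\to\infty$), whereas your lower bound $C(\lambda_n)$ degenerates like $\lambda^{-2}$ at high frequency and would cost you two powers of $\lambda$ in that later argument; on the other hand, your route avoids the hypothesis $\sigma(0)<\infty$ in this step and makes transparent exactly where the nondegeneracy of the kernel enters. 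One small caveat: your claim that Lemmas \ref{LEMMA2-EST}--\ref{LEMMA4-EST} transfer ``essentially verbatim'' should be read as referring to their multiplier computations rather than their statements, since the constants $K_1=\frac{1}{c(1-m)}$ and $K_3=c_pK_1$ appearing there blow up at $m=1$; you correctly note that the only inputs actually needed are $\|w^n\|\to 0$ and $\|\Lambda^{1}_x\|\to 0$, which you supply independently, and this is precisely how the paper reorganizes the argument in its Lemmas \ref{POLSTA3} and \ref{POLSTA5}.
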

\noindent We will prove Proposition \ref{iRrhoA} by contradiction argument. Remark that, it has been proved in Proposition \ref{m-dissipative} that $0\in \rho(\mathcal{A}_1)$. Now, suppose that \eqref{iR} is false, then there exists $l\in \mathbb{R}^{\ast}$ such that $i\,l \notin \rho(\mathcal{A}_1)$. According to Remark \ref{App-Lemma-A.3}, let $\left\{(\lambda^n,U^n:=(u^n,v^n,y^n,z^n,w^n,\eta^n)^{\top})\right\}_{n\geq 1}\subset \mathbb{R}^{\ast}\times D(\mathcal{A}_1)$, with 
\begin{equation}\label{arg-pol1}\tag{{\rm CA1}}
\la_n\to l\quad \text{as}\ n\to \infty \quad \text{and}\quad  \abs{\la_n}<\abs{l},
\end{equation}  
and 
\begin{equation}\label{U=1}
\|U^n\|_{\mathcal{H}}=\|\left(u^n,v^n,y^n,z^n,w^n,\eta^n\right)^{\top}\|_{\mathcal{H}}=1,	
\end{equation}
such that 
\begin{equation}\label{DEC-POL1}
 \left(i\la_nI-{\mathcal{A}}_1\right)U^n=F_n:=\left(f^1_n,f^2_n,f^3_n,f^4_n,f^5_n,f^6_n(\cdot,s)\right)\to 0\quad \text{in}\quad \mathcal{H},\quad \text{as}\ n\to \infty.	
 \end{equation}
Equivalently, from \eqref{DEC-POL1}, we have 
\begin{eqnarray}
i\lambda_n u^n-v^n&=&f^{1}_n\quad \text{in}\ \ H_L^1(0,L),\label{POL-eq1}\\
i\lambda_n\rho v^n-\alpha u^n_{xx}+\gamma\beta y^n_{xx}+\delta \omega^n_x&=&\rho f^{2}_n\quad \text{in}\ \ L^2(0,L),\label{POL-eq2}\\
i\lambda_n y^n-z^n&=&f^{3}_n\quad \text{in}\ \ H_L^1(0,L),\label{POL-eq3}\\ 
i\lambda_n\mu z^n-\beta y^n_{xx}+\gamma\beta u^n_{xx}&=&\mu f^{4}_n\quad \text{in}\ \ L^2(0,L),\label{POL-eq4}\\
i\lambda_n w^n-c\Lambda^{1,n}_{xx}+\delta v^n_x&=&f^{5}_n\quad \text{in}\ \ L^2(0,L),\label{POL-eq5}\\
i\lambda_n \eta^n+\eta^n_s-w^n&=&f^{6}_n(\cdot,s)\quad \text{in}\ \ W.\label{POL-eq6}
\end{eqnarray}
Then, we will proof condition \eqref{iR} by finding a contradiction with \eqref{U=1} such as $\|U^n\|_{\mathcal{H}}\to 0$. The proof of proposition \ref{iRrhoA} has been divided into several Lemmas. 
\begin{lemma}\label{POLSTA1}
Let $m=1$ and assume that \eqref{CONDITION-H} holds.  Then,  the solution $(u^n,v^n,y^n,z^n,w^n,\eta^n)\in D(\mathcal{A}_1)$ of \eqref{POL-eq1}-\eqref{POL-eq6} satisfies 
\begin{equation}\label{Lemma1-EQPOL1}
\displaystyle -\int_0^L\int_0^{\infty}\sigma'(s)\abs{\eta^n_x}^2dsdx \underset{n\to \infty}\longrightarrow 0\end{equation}
and 
\begin{equation}\label{Lemma1-EQPOL2}
\displaystyle \int_0^L\int_0^{\infty}\sigma(s)\abs{\eta^n_x}^2dsdx\underset{n\to \infty}\longrightarrow 0.
\end{equation}	
\end{lemma}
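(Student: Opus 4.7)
The plan is to extract both estimates directly from the dissipation identity obtained by pairing the resolvent equation with $U^n$ in $\mathcal{H}$, exactly as was done in Lemma \ref{LEMMA1-EST} for the Coleman--Gurtin case, but now specialized to $m=1$.

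First, I would take the real part of the inner product $\langle (i\lambda_n I - \mathcal{A}_1)U^n, U^n\rangle_{\mathcal{H}} = \langle F_n, U^n\rangle_{\mathcal{H}}$. Since $\Re\langle i\lambda_n U^n, U^n\rangle = 0$, this yields
\begin{equation*}
-\Re\langle \mathcal{A}_1 U^n, U^n\rangle_{\mathcal{H}} = \Re\langle F_n, U^n\rangle_{\mathcal{H}} \leq \|F_n\|_{\mathcal{H}} \|U^n\|_{\mathcal{H}}.
\end{equation*}
Using the dissipativity computation already recorded in \eqref{dissipative}, with $m=1$ the $c(m-1)\int |\omega^n_x|^2 dx$ term vanishes identically, leaving only the memory contribution
\begin{equation*}
-\frac{c}{2}\int_0^L\int_0^{\infty}\sigma'(s)|\eta^n_x|^2\,ds\,dx \;\leq\; \|F_n\|_{\mathcal{H}}\|U^n\|_{\mathcal{H}}.
\end{equation*}
Since $\|U^n\|_{\mathcal{H}}=1$ by \eqref{U=1} and $\|F_n\|_{\mathcal{H}}\to 0$ by \eqref{DEC-POL1}, the right-hand side tends to $0$, proving \eqref{Lemma1-EQPOL1}.

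For \eqref{Lemma1-EQPOL2}, I would invoke the Dafermos condition from hypothesis \eqref{CONDITION-H}, namely $\sigma(s) \leq -d_\sigma^{-1}\sigma'(s)$, to dominate the weight:
\begin{equation*}
\int_0^L\int_0^{\infty}\sigma(s)|\eta^n_x|^2\,ds\,dx \;\leq\; -\frac{1}{d_\sigma}\int_0^L\int_0^{\infty}\sigma'(s)|\eta^n_x|^2\,ds\,dx.
\end{equation*}
Combining this with \eqref{Lemma1-EQPOL1} immediately yields \eqref{Lemma1-EQPOL2}.

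There is really no obstacle here; this lemma is the exact Gurtin--Pipkin analogue of the first two estimates of Lemma \ref{LEMMA1-EST}, and the argument is a near-verbatim repetition. The essential feature being used is that even in the purely hyperbolic heat regime ($m=1$, so the instantaneous diffusion $c(1-m)\int|\omega_x|^2 dx$ is lost), the memory still provides control on $\sigma(s)|\eta_x|^2$ via the Dafermos condition. What one does \emph{not} obtain here, and what will be the genuine difficulty later in the polynomial-decay proof, is any direct control on $\int_0^L |\omega^n|^2 dx$ or $\int_0^L|\omega^n_x|^2 dx$ -- precisely the loss that distinguishes \eqref{Pizo-GP} from \eqref{Pizo} and forces the slower $t^{-1}$ rate.
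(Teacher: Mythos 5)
Your proposal is correct and follows essentially the same route as the paper: take the real part of the inner product of the resolvent equation with $U^n$, use the dissipativity identity \eqref{dissipative} specialized to $m=1$ to bound $-\frac{c}{2}\int_0^L\int_0^{\infty}\sigma'(s)\abs{\eta^n_x}^2\,ds\,dx$ by $\|F_n\|_{\mathcal{H}}\|U^n\|_{\mathcal{H}}\to 0$, and then invoke the Dafermos condition from \eqref{CONDITION-H} to deduce \eqref{Lemma1-EQPOL2} from \eqref{Lemma1-EQPOL1}. This matches the paper's proof step for step.
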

\begin{proof}
First, taking the inner product of \eqref{DEC-POL1} with $U^n$ in $\mathcal{H}$, we get 
$$
-\frac{c}{2}\int_0^L\int_0^{\infty}\sigma'(s)\abs{\eta^n_x}^2dsdx =-\Re\left(\left<{\mathcal{A}}_1U^n,U^n\right>_{\mathcal{H}}\right)\leq \|F_n\|_{\mathcal{H}}\|U^n\|_{\mathcal{H}}\underset{n\to \infty}\longrightarrow 0.	
$$
Then, \eqref{Lemma1-EQPOL1} holds. Using condition \eqref{CONDITION-H}, we get 
\begin{equation*}
\int_0^L\int_0^{\infty}\sigma(s)\abs{\eta^n_x(\cdot,s)}^2dsdx\leq -\frac{1}{d_{\sigma}}\int_0^L\int_0^{\infty}\sigma'(s)\abs{\eta^n_x(\cdot,s)}^2dsdx.	
\end{equation*}
Using \eqref{Lemma1-EQPOL1} in the above inequality, we get \eqref{Lemma1-EQPOL2}. The proof has been completed. 
\end{proof}
\begin{lemma}\label{POLSTA2}
Let $m=1$ and assume that \eqref{CONDITION-H} holds. Then, the solution $(u^n,v^n,y^n,z^n,w^n,\eta^n)\in D(\mathcal{A}_1)$ of \eqref{POL-eq1}-\eqref{POL-eq6} satisfies 
\begin{equation}\label{Lemma2-EQPOL1}
\int_0^L\abs{\omega^n_x}^2dx\underset{n\to \infty}\longrightarrow 0\quad \text{and}\quad 	\int_0^L\abs{\omega^n}^2dx\underset{n\to \infty}\longrightarrow 0 \end{equation}	
\end{lemma}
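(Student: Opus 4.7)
I will solve the first-order ODE \eqref{POL-eq6} in the $s$-variable by variation of constants, which splits $\eta^n$ into a multiple of $w^n$ plus a piece driven by $f^6_n$. A $\sigma$-weighted energy identity will control the driven piece by $\|f^6_n\|_W\to 0$, and combining this with the dissipation of Lemma~\ref{POLSTA1} will give $w^n_x\to 0$, hence $w^n\to 0$ by Poincar\'e.

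\textbf{Step 1 (representation of $\eta^n$).} Since $U^n\in D(\mathcal{A}_1)$ forces $\eta^n(\cdot,0)=0$, integrating \eqref{POL-eq6} in $s$ yields
\[
\eta^n(x,s)=\varphi_n(s)\,w^n(x)+\tilde\eta^n(x,s),\qquad \varphi_n(s):=\frac{1-e^{-i\lambda_n s}}{i\lambda_n},
\]
where $\tilde\eta^n(x,s):=\int_0^s e^{-i\lambda_n(s-r)}f^6_n(x,r)\,dr$ solves $\tilde\eta^n_s+i\lambda_n\tilde\eta^n=f^6_n$ with $\tilde\eta^n(\cdot,0)=0$. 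Setting $\xi^n:=\eta^n_x-\varphi_n\,w^n_x=\tilde\eta^n_x$, differentiation in $x$ gives $\xi^n_s+i\lambda_n\xi^n=f^6_{n,x}$ with $\xi^n(\cdot,0)=0$.

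\textbf{Step 2 (weighted estimate on $\xi^n$).} I would multiply this equation by $\sigma(s)\overline{\xi^n}$, integrate over $(0,L)\times(0,\infty)$, and take real parts. Because $\xi^n(\cdot,0)=0$ and $\sigma$ decays exponentially (from the Dafermos condition in \eqref{CONDITION-H}), the integration by parts in $s$ gives
\[
-\tfrac{1}{2}\int_0^L\!\!\int_0^\infty\sigma'(s)|\xi^n|^2\,ds\,dx=\Re\!\int_0^L\!\!\int_0^\infty\sigma(s)\,f^6_{n,x}\,\overline{\xi^n}\,ds\,dx.
\]
Cauchy-Schwarz on the right together with the Dafermos inequality $\sigma\le -\sigma'/d_\sigma$ absorbs $\int\!\!\int\sigma|\xi^n|^2$ into the left-hand side, producing
\[
\int_0^L\!\!\int_0^\infty\sigma(s)|\xi^n|^2\,ds\,dx\le \frac{4}{d_\sigma^{2}}\int_0^L\!\!\int_0^\infty\sigma(s)|f^6_{n,x}|^2\,ds\,dx\underset{n\to\infty}{\longrightarrow}0,
\]
since $\|f^6_n\|_W\to 0$.

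\textbf{Step 3 (closing the argument).} As $\varphi_n$ is $x$-independent, the identity $\varphi_n\,w^n_x=\eta^n_x-\xi^n$ gives
\[
\Bigl(\int_0^\infty\sigma(s)|\varphi_n(s)|^2\,ds\Bigr)\int_0^L|w^n_x|^2\,dx=\int_0^L\!\!\int_0^\infty\sigma(s)|\eta^n_x-\xi^n|^2\,ds\,dx,
\]
whose right-hand side is bounded by $2\int\!\!\int\sigma|\eta^n_x|^2+2\int\!\!\int\sigma|\xi^n|^2$, tending to $0$ by Lemma~\ref{POLSTA1} and Step~2. The prefactor equals $(2/\lambda_n^2)\int_0^\infty\sigma(s)(1-\cos(\lambda_n s))\,ds$, which, as $\lambda_n\to l\ne 0$, converges to the strictly positive number $(2/l^2)\int_0^\infty\sigma(s)(1-\cos(ls))\,ds$ (the integrand is nonnegative and positive on a set of full measure since $\int\sigma=g(0)>0$). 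Hence $\int_0^L|w^n_x|^2\,dx\to 0$, and Poincar\'e (using $w^n(0)=0$) yields $\int_0^L|w^n|^2\,dx\to 0$. The main obstacle is Step~2: the equation for $\xi^n$ is a forced transport equation whose natural $\sigma$-weighted energy identity is precisely compatible with the fact that $f^6_{n,x}$ is measured in the memory space $W$, so the Dafermos condition closes the estimate cleanly; without this compatibility one would have to control the $f^6$ contribution by an unweighted norm, which is not available.
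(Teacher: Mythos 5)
Your argument is correct, but it is a genuinely different route from the paper's. The paper works directly with the relation $w^n_x=i\la_n\eta^n_x+\eta^n_{sx}-(f^6_n)_x$ obtained from \eqref{POL-eq6}: it multiplies by $\sigma(s)\overline{w^n_x}$, integrates by parts in $s$ to turn the $\eta^n_{sx}$ term into $-\int\!\!\int\sigma'(s)\eta^n_x\overline{w^n_x}$, and then uses Young's inequality with small parameters to absorb all $\|w^n_x\|^2$ terms into the left-hand side, arriving at the bound
\[
\frac{g(0)}{2}\int_0^L\abs{w^n_x}^2dx\leq 2\abs{\la}^2\!\int_0^L\!\!\int_0^{\infty}\!\sigma(s)\abs{\eta^n_x}^2dsdx+\frac{\sigma(0)}{g(0)}\int_0^L\!\!\int_0^{\infty}\!(-\sigma'(s))\abs{\eta^n_x}^2dsdx+2\int_0^L\!\!\int_0^{\infty}\!\sigma(s)\abs{(f_n^6)_x}^2dsdx,
\]
which vanishes by Lemma~\ref{POLSTA1}; note this needs \emph{both} limits of that lemma (the $\sigma$-weighted and the $-\sigma'$-weighted one). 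You instead solve the transport equation in $s$ explicitly by Duhamel, splitting $\eta^n_x=\varphi_n w^n_x+\xi^n$, control $\xi^n$ by a separate $\sigma$-weighted energy estimate in terms of $\|f^6_n\|_W$, and then recover $\|w^n_x\|^2$ by dividing by the explicit constant $\int_0^\infty\sigma(s)\abs{\varphi_n(s)}^2ds\to\frac{2}{l^2}\int_0^\infty\sigma(s)(1-\cos(ls))ds>0$. Your version makes the mechanism transparent (an explicit observability constant, strictly positive because $l\neq 0$) and only uses the first limit of Lemma~\ref{POLSTA1}; the paper's multiplier version has the advantage of being uniform in $\la$ and is reused verbatim in the polynomial-stability section, where $\abs{\la_n}\to\infty$ and your prefactor would degenerate like $\la_n^{-2}$ (consistently with the $\abs{\la}^2$ weight in the paper's estimate). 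Both arguments share the same tacit step, also unaddressed in the paper, that the boundary term $\sigma(s)\abs{\cdot}^2$ vanishes as $s\to\infty$ in the integration by parts in $s$; this is standard for elements of $W$ with $s$-derivative in $W$, but worth a remark.
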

\begin{proof}The proof of this Lemma is divided into two steps.\\
\textbf{Step 1.} First, we prove the following estimation 
\begin{equation}\label{L2-S1-EQ1}
\begin{array}{l}
\displaystyle
\frac{g(0)}{2}\int_0^L\abs{w^n_x}^2dx\leq 2\abs{\la}^2\int_0^L\int_0^{\infty}\sigma(s)\abs{\eta^n_x}^2dsdx+\frac{\sigma(0)}{g(0)}\int_0^L\int_0^{\infty}(-\sigma'(s))\abs{\eta^n_x}^2dsdx\\
\displaystyle 
+2\int_0^L\int_0^{\infty}\sigma(s)\abs{(f_n^6)
_x}^2dsdx.
\end{array}
\end{equation}
From \eqref{POL-eq6}, we have 
\begin{equation}\label{L2-S1-EQ2}
\omega^n_x=i\la_n\eta^n_x+\eta^n_{sx}-(f^6_n)_x.
\end{equation}
Multiplying \eqref{L2-S1-EQ2} by $\sigma(s)\overline{w^n_x}$, integrating over $(0,\infty)\times (0,L)$, we get 
\begin{equation*}\label{L2-S1-EQ3}
g(0)\int_0^L\abs{w^n_x}^2dx=i\la \int_0^L\int_0^{\infty}\sigma(s)\eta^n_x\overline{w^n_x}dsdx+\int_0^L\int_0^{\infty}\sigma(s)\eta^n_{sx}\overline{w^n_x}dsdx-\int_0^L\int_0^{\infty}\sigma(s)(f_n^6)_x\overline{w^n_x}dsdx.
\end{equation*}
Using integration by parts with respect to $s$ in the above equation and the fact that $\eta^n(\cdot,0)=0$ in $(0,L)$, we get 
\begin{equation}\label{L2-S1-EQ3}
g(0)\int_0^L\abs{w^n_x}^2dx=i\la \int_0^L\int_0^{\infty}\sigma(s)\eta^n_x\overline{w^n_x}dsdx-\int_0^L\int_0^{\infty}\sigma'(s)\eta^n_{x}\overline{w^n_x}dsdx-\int_0^L\int_0^{\infty}\sigma(s)(f_n^6)_x\overline{w^n_x}dsdx.	
\end{equation}
It follows that, 
\begin{equation}\label{L2-S1-EQ4}
\begin{array}{l}
\displaystyle 
g(0)\int_0^L\abs{w^n_x}^2dx\leq \abs{\la}\int_0^L\int_0^{\infty}\sigma(s)\abs{\eta^n_x}\abs{w^n_x}dsdx+\int_0^L\int_0^{\infty}-\sigma'(s)\abs{\eta^n_{x}}\abs{w^n_x}dsdx\\[0.1in]
\displaystyle 
+\int_0^L\int_0^{\infty}\sigma(s)\abs{(f_n^6)_x}\abs{w^n_x}dsdx.	
\end{array}
\end{equation}
Applying Cauchy-Schwarz and Young's inequality, we get 
\begin{equation}\label{L2-S1-EQ5}
\abs{\la}\int_0^L\int_0^{\infty}\sigma(s)\abs{\eta^n_x}\abs{w^n_x}dxds\leq \frac{\abs{\la}^2}{2\varepsilon}\int_0^L\int_0^{\infty}\sigma(s)\abs{\eta^n_x}^2dsdx+\frac{\varepsilon g(0)}{2}\int_0^L\abs{w^n_x}^2dx,	
\end{equation}
\begin{equation}\label{L2-S1-EQ6}
\begin{array}{rll}
\displaystyle \int_0^L\int_0^{\infty}-\sigma'(s)\abs{\eta^n_{x}}\abs{w^n_x}dsdx&\leq & \displaystyle 
\sqrt{\sigma(0)}\left(\int_0^L\abs{w^n_x}^2dx\right)^{\frac{1}{2}}\left(\int_0^L\int_0^{\infty}-\sigma'(s)\abs{\eta^n_x}^2dsdx\right)^{\frac{1}{2}}\\[0.1in]
&\leq &\displaystyle 
\frac{\varepsilon_1}{2}\int_0^L\abs{w^n_x}^2+\frac{\sigma(0)}{2\varepsilon_1}\int_0^L\int_0^{\infty}-\sigma'(s)\abs{\eta^n_x}^2dsdx
\end{array}	
\end{equation}
and 
\begin{equation}\label{L2-S1-EQ7}
\int_0^L\int_0^{\infty}\sigma(s)\abs{(f_n^6)_x}\abs{w^n_x}dsdx\leq \frac{\varepsilon_2 g(0)}{2}\int_0^L\abs{w^n_x}^2dxds+\frac{1}{2\varepsilon_2}\int_0^L\int_0^{\infty}\sigma(s)\abs{(f_n^6)_x}dsdx.
\end{equation}
Inserting \eqref{L2-S1-EQ5}-\eqref{L2-S1-EQ7} in \eqref{L2-S1-EQ4}, we get 
\begin{equation}\label{L2-S1-EQ8}
\begin{array}{l}
\displaystyle 
\left(g(0)-\frac{\varepsilon g(0)}{2}-\frac{\varepsilon_1}{2}-\frac{\varepsilon_2 g(0)}{2}\right)\int_0^L\abs{w^n_x}^2dx\leq \frac{\abs{\la}^2}{2\varepsilon}\int_0^L\int_0^{\infty}\sigma(s)\abs{\eta^n_x}^2dsdx+\\[0.1in]
\displaystyle
\frac{\sigma(0)}{2\varepsilon_1}\int_0^L\int_0^{\infty}-\sigma'(s)\abs{\eta^n_x}^2dsdx+\frac{1}{2\varepsilon_2}\int_0^L\int_0^{\infty}\sigma(s)\abs{(f_n^6)_x}dsdx.
\end{array}	
\end{equation}
Taking $\varepsilon=\varepsilon_2=\frac{1}{4}$ and $\varepsilon_1=\frac{g(0)}{2}$ in \eqref{L2-S1-EQ8}, we get \eqref{L2-S1-EQ1}.\\
\textbf{Step 2.} The aim of this step is to prove \eqref{Lemma2-EQPOL1}. For this aim, using Lemma \eqref{POLSTA1} and the fact that $\|f_6^n\|_{W}\to 0$ in $\mathcal{H}$ in \eqref{L2-S1-EQ1}, we get  the first estimation in \eqref{Lemma2-EQPOL1}. Next, using Poincar\'e inequality, we get 
$$
\int_0^L\abs{w^n}^2dx\leq c_p\int_0^L\abs{w^n_x}^2dx\underset{n\to \infty}\longrightarrow 0.
$$
The proof has been completed.  
\end{proof}
\begin{lemma}\label{POLSTA3}
Let $m=1$ and assume that \eqref{CONDITION-H} holds. Then, the solution $(u^n,v^n,y^n,z^n,w^n,\eta^n)\in D(\mathcal{A}_1)$ of \eqref{POL-eq1}-\eqref{POL-eq6} satisfies
\begin{equation}\label{0Lemma3-EQPOL1}
\int_0^L\abs{u^n_x}^2dx\underset{n\to \infty}\longrightarrow 0\end{equation}
and 
\begin{equation}\label{Lemma3-EQPOL2}
\int_0^L\abs{v^n}^2dx\underset{n\to \infty}\longrightarrow  0.
\end{equation}
\end{lemma}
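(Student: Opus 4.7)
\medskip

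\noindent\textbf{Proof proposal.} The strategy mirrors Lemma \ref{LEMMA2-EST} of the exponential-stability section, with the simplification that $\lambda_n$ is bounded and bounded away from zero (since $\lambda_n \to l \neq 0$ with $|\lambda_n|<|l|$), but with the stricter requirement that the right-hand side must be $o(1)$ rather than $O(1)$. First I would combine \eqref{POL-eq1} and \eqref{POL-eq5} to eliminate $v^n_x$ and obtain the identity
$$
i\lambda_n\delta\, u^n_x \;=\; -i\lambda_n w^n + c\,\Lambda^{1,n}_{xx} + f^5_n + \delta(f^1_n)_x.
$$
Multiplying by $-i\lambda_n^{-1}\overline{u^n_x}$, integrating over $(0,L)$, and integrating by parts in the $\Lambda^{1,n}_{xx}$ term (using the Neumann condition $u^n_x(L)=0$ to kill the boundary contribution at $L$) yields
$$
\delta \int_0^L |u^n_x|^2\,dx \;=\; -\int_0^L w^n\overline{u^n_x}\,dx + i\lambda_n^{-1} c \int_0^L \Lambda^{1,n}_x \overline{u^n_{xx}}\,dx + i\lambda_n^{-1} c\, \Lambda^{1,n}_x(0)\overline{u^n_x(0)} - i\lambda_n^{-1}\!\int_0^L\! (f^5_n+\delta (f^1_n)_x)\overline{u^n_x}\,dx.
$$

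Next I would show that every term on the right-hand side is $o(1)$. The first term vanishes because $\|w^n\|_{L^2}\to 0$ by Lemma \ref{POLSTA2} and $\|u^n_x\|$ is bounded by $\|U^n\|_{\mathcal{H}}=1$. The last term is $o(1)$ since $\|f^5_n\|, \|(f^1_n)_x\|\to 0$. For the second term, since $m=1$ gives $\Lambda^{1,n}_x=\int_0^\infty\sigma(s)\eta^n_x(s)\,ds$, Cauchy--Schwarz together with the unit total mass of $g$ yields
$$
\int_0^L |\Lambda^{1,n}_x|^2\,dx \;\leq\; g(0)\int_0^L\!\!\int_0^\infty \sigma(s)|\eta^n_x|^2\,ds\,dx \;\longrightarrow\; 0
$$
by Lemma \ref{POLSTA1}; meanwhile $\|u^n_{xx}\|$ is bounded since from \eqref{POL-eq2} and \eqref{POL-eq4} one can express $\alpha_1 u^n_{xx}$ as a linear combination of $\lambda_n\rho v^n$, $\lambda_n\gamma\mu z^n$, $\delta w^n_x$ and source terms, each of which is bounded (recall $\|w^n_x\|\to 0$ and $\lambda_n$ bounded).

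The main obstacle is controlling the boundary term $\lambda_n^{-1}\Lambda^{1,n}_x(0)\overline{u^n_x(0)}$, and this is where I expect the delicate work. I would proceed exactly as in \eqref{LEMMA2-EQ7}--\eqref{LEMMA2-EQ10}: Gagliardo--Nirenberg gives
$$
|\Lambda^{1,n}_x(0)| \;\lesssim\; \|\Lambda^{1,n}_x\|^{1/2}\|\Lambda^{1,n}_{xx}\|^{1/2} + \|\Lambda^{1,n}_x\|, \qquad |u^n_x(0)|\;\lesssim\; \|u^n_{xx}\|^{1/2}\|u^n_x\|^{1/2} + \|u^n_x\|.
$$
Here $\|\Lambda^{1,n}_{xx}\|$ is bounded, which follows from \eqref{POL-eq5} together with $\|v^n_x\|$ bounded (via \eqref{POL-eq1}); we already have $\|\Lambda^{1,n}_x\|\to 0$. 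Thus $|\Lambda^{1,n}_x(0)|\to 0$ while $|u^n_x(0)|$ stays bounded, so the boundary product is $o(1)$. Since $|\lambda_n^{-1}|$ remains bounded, the whole right-hand side is $o(1)$, proving \eqref{0Lemma3-EQPOL1}.

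Finally, \eqref{Lemma3-EQPOL2} is immediate: from \eqref{POL-eq1}, $v^n = i\lambda_n u^n - f^1_n$, and Poincar\'e's inequality combined with \eqref{0Lemma3-EQPOL1} and the boundedness of $\lambda_n$ yields $\|v^n\|_{L^2}\leq |\lambda_n| C_p\|u^n_x\|+\|f^1_n\|\to 0$. This completes the proof.
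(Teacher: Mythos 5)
Your proposal is correct and follows essentially the same route as the paper: the same identity obtained by eliminating $v^n_x$ from \eqref{POL-eq1} and \eqref{POL-eq5}, the same multiplier $-i\lambda_n^{-1}\overline{u^n_x}$ with integration by parts, the same a priori bounds on $\|u^n_{xx}\|$ and $\|\Lambda^{1,n}_{xx}\|$, the same Gagliardo--Nirenberg treatment of the boundary term $\lambda_n^{-1}\Lambda^{1,n}_x(0)\overline{u^n_x(0)}$, and the same Poincar\'e argument for $v^n$. No gaps.
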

\begin{proof}
The proof of this Lemma is divided into several steps.\\
\textbf{Step 1.} The aim of this step is to prove the following estimations:
\begin{equation}\label{L3-S1-EQ1}
\|\Lambda_{xx}^{1,n}\|_{L^2(0,L)}\lesssim \abs{\lambda_n}\|U^n\|_{\mathcal{H}}+\|F^n\|_{\mathcal{H}}.	
\end{equation}
\begin{equation}\label{L3-S1-EQ2}
\|u_{xx}^n\|_{L^2(0,L)}\lesssim \abs{\lambda_n}\|U^n\|_{\mathcal{H}}+\|w^n_x\|_{L^2(0,L)}+\|F^n\|_{\mathcal{H}}.	
\end{equation}
First, we prove \eqref{L3-S1-EQ1}. Using \eqref{POL-eq5} and \eqref{POL-eq1}, we get 
$$
c\Lambda_{xx}^{1,n}=i\lambda_nw^n+\delta i\lambda_n u_x^n-\delta(f_n^1)_x-f_n^5. 
$$
Using the fact that $\|w^n\|\leq \|U^n\|_{\mathcal{H}}$, $\sqrt{\alpha_1}\|u^n_x\|\leq \|U^n\|_{\mathcal{H}}$, $\|f^5_n\|\leq \|F_n\|_{\mathcal{H}}$ and $\sqrt{\alpha_1}\|(f_n^1)_x\|\leq \|F^n\|_{\mathcal{H}}$, we get 
$$
c\|\Lambda_{xx}^{1,n}\|_{L^2(0,L)}\leq \left(1+\frac{\delta}{\sqrt{\alpha_1}}\right)\left(\abs{\lambda_n}\|U^n\|_{\mathcal{H}}+\|F_n\|_{\mathcal{H}}\right).
$$
Then, we get \eqref{L3-S1-EQ1}. In order to prove \eqref{L3-S1-EQ2}, using \eqref{POL-eq2} and the fact that $\alpha=\alpha_1+\gamma^2\beta$, we get 
$$
\alpha_1u^n_{xx}=i\lambda\left(\rho v^n+\gamma \mu z^n\right)+\delta w^n_x-\rho f^2_n-\gamma \mu f^4_n.
$$
Using the fact that $\sqrt{\rho}\|v^n\|\leq \|U^n\|_{\mathcal{H}}$, $\sqrt{\mu}\|z^n\|\leq \|U^n\|_{\mathcal{H}}$, $\sqrt{\rho}\|f^2_n\|\leq \|F^n\|_{\mathcal{H}}$ and $\sqrt{\mu}\|f_4^n\|\leq \|F^n\|_{\mathcal{H}}$ in the above inequality, we obtain 
\begin{equation*}
\alpha_1\|u^n_{xx}\|_{L^2(0,L)}\leq\left(\sqrt{\rho}+\gamma\sqrt{\mu}\right)\left(\abs{\lambda_n}\|U^n\|_{\mathcal{H}}+\|F_n\|_{\mathcal{H}}\right)+\delta \|w^n_x\|.
\end{equation*}
Then, we get \eqref{L3-S1-EQ2}.\\
\textbf{Step 2.} The aim of this step is to prove \eqref{0Lemma3-EQPOL1}. From \eqref{POL-eq1} and \eqref{POL-eq5}, we have 
\begin{equation*}
i\la_n\delta u^n_x=-i\la_n w^n+c\Lambda_{xx}^{1,n}+\delta (f^1_n)_x+f^5.	
\end{equation*}
Multiplying the above equation by $-i\la_n^{-1}\delta \overline{u^n_x}$, integrating by parts over $(0,L)$, we get 
\begin{equation*}
\delta \int_0^L\abs{u^n_x}^2dx=-\delta\int_0^Lw^n\overline{u_x^n}dx+i\la^{-1}c\int_0^L\Lambda^{1,n}_x\overline{u^n_{xx}}dx+i\la_n^{-1}c\Lambda^n_x(0)\overline{u_x^n(0)}-i\la_n^{-1}\int_0^L(\delta (f^1_n)_x+f^5_n)\overline{u^n_x}dx.	
\end{equation*}
It follows that, 
\begin{equation}\label{L3-S2-EQ1}
\begin{array}{l}
\displaystyle
\delta \int_0^L\abs{u^n_x}^2dx\leq \delta \int_0^L\abs{w^n}\abs{u^n_x}dx+\abs{\la_n}^{-1}c\int_0^L\abs{\Lambda_x^{1,n}}\abs{u^n_{xx}}dx	+c\abs{\lambda_n^{-1}}\abs{\Lambda_x^n(0)}\abs{u^n_x(0)}\\[0.1in]
\displaystyle 
\abs{\la_n^{-1}}\delta\int_0^L\abs{(f_n^1)_x}\abs{u^n_x}dx+\abs{\la_n^{-1}}\int_0^L\abs{f^5_n}\abs{u^n_x}dx. 
\end{array}	
\end{equation}
Using the fact that $\sqrt{\alpha_1}\|u^n_x\|\leq \|U^n\|_{\mathcal{H}}$, \eqref{Lemma2-EQPOL1}, we get 
\begin{equation}\label{L3-S2-EQ2}
 \int_0^L\abs{w^n}\abs{u^n_x}dx\underset{n\to \infty}\longrightarrow 0.
 \end{equation}
Using the fact that $\sqrt{\alpha_1}\|(f^1_n)_x\|\leq \|F_n\|_{\mathcal{H}}\to 0$, $\|f^5_n\|\leq \|F_n\|_{\mathcal{H}}\to 0$,$\sqrt{\alpha_1}\|u^n_x\|\leq \|U^n\|_{\mathcal{H}}$  and \eqref{arg-pol1} we get
\begin{equation}\label{L3-S2-EQ3}
\abs{\la_n^{-1}}\delta\int_0^L\abs{(f_n^1)_x}\abs{u^n_x}dx\underset{n\to \infty}\longrightarrow 0.
\end{equation}
and 

\begin{equation}\label{L3-S2-EQ4}
\abs{\la_n^{-1}}\int_0^L\abs{f^5_n}\abs{u^n_x}dx\underset{n\to \infty}\longrightarrow 0.
\end{equation}
Using \eqref{Lemma1-EQPOL2}, \eqref{Lemma2-EQPOL1}, \eqref{L3-S1-EQ2}, \eqref{U=1}  and \eqref{arg-pol1},   we get 
\begin{equation}\label{L3-S2-EQ5}
\abs{\la_n}^{-1}c\int_0^L\abs{\Lambda_x^{1,n}}\abs{u^n_{xx}}dx\underset{n\to \infty}\longrightarrow 0.
\end{equation}
Using Gagliardo-Nirenberg inequality, \eqref{L3-S1-EQ1}, \eqref{L3-S1-EQ2}, $\|U^n\|_{\mathcal{H}}=1$, $\|F_n\|_{\mathcal{H}}\to 0$ and \eqref{arg-pol1} we get 
\begin{equation}\label{L3-S2-EQ6}
 \abs{\Lambda^{1,n}_x(0)}\lesssim \left(\|\Lambda^{1,n}_{xx}\|^{\frac{1}{2}}\|\Lambda^{1,n}_{x}\|^{\frac{1}{2}}+\|\Lambda^{1,n}_{x}\|\right)\underset{n\to \infty}\longrightarrow 0
 \end{equation}
and 
\begin{equation}\label{L3-S2-EQ7}
\abs{u^n_x(0)}\lesssim \left(\|u^n_{xx}\|^{\frac{1}{2}}\|u_x^n\|^{\frac{1}{2}}+\|u_x^n\|\right) \lesssim M.
\end{equation}
From \eqref{L3-S2-EQ6} and \eqref{L3-S2-EQ7}, we obtain 
\begin{equation}\label{L3-S2-EQ8}
\abs{\lambda_n^{-1}}\abs{\Lambda_x^n(0)\abs{u^n_x(0)}}\underset{n\to \infty}\longrightarrow 0.
\end{equation}
Finally, inserting \eqref{L3-S2-EQ2}, \eqref{L3-S2-EQ3}, \eqref{L3-S2-EQ4}, \eqref{L3-S2-EQ5} and \eqref{L3-S2-EQ8} in \eqref{L3-S2-EQ1}, we get the desired result \eqref{0Lemma3-EQPOL1}.\\
\textbf{Step 3.} The aim of this step is to prove \eqref{Lemma3-EQPOL2}. From \eqref{0Lemma3-EQPOL1} and Poincar\'e Inequality, we get 
\begin{equation}\label{L3-S3-EQ1}
\int_0^L\abs{u^n}^2dx\underset{n\to \infty}\longrightarrow 0.
\end{equation}
From \eqref{POL-eq1}, we get 
\begin{equation*}
\int_0^L\abs{v^n}^2dx\leq 2\abs{\la_n}^2\int_0^L\abs{u^n}^2dx+2c_p\int_0^L\abs{(f_n^1)_x}^2dx.	
\end{equation*}
Passing to the limit in the above inequality and  using \eqref{arg-pol1}, \eqref{L3-S3-EQ1} and the fact that $\sqrt{\alpha_1}\|(f^1_n)_x\|\leq \|F^n\|_{\mathcal{H}}\underset{n\to \infty}\longrightarrow 0$, we get the desired result \eqref{Lemma3-EQPOL2}. The proof is thus completed.  
\end{proof}

\noindent Inserting \eqref{POL-eq1} in \eqref{POL-eq2}, we get 
\begin{equation}\label{Combining}
-\la^2\rho u^n-\alpha u^n_{xx}+\gamma \beta y^n_{xx}+\delta w^n_x=\rho f^2_n+i\lambda_n\rho f^1_n.	
\end{equation}

\begin{lemma}\label{POLSTA5}
Let $m=1$ and assume that \eqref{CONDITION-H} holds. Then, the solution $(u^n,v^n,y^n,z^n,w^n,\eta^n)\in D(\mathcal{A}_1)$ of \eqref{POL-eq1}-\eqref{POL-eq6} satisfies the following estimation 
\begin{equation}\label{Lemma5-EQPOL1}
\int_0^L\abs{y^n_x}^2dx\underset{n\to \infty}\longrightarrow 0
\end{equation}
and 
\begin{equation}\label{Lemma5-EQPOL2}
\int_0^L\abs{z^n}^2dx\underset{n\to \infty}\longrightarrow  0.
\end{equation} 	
\end{lemma}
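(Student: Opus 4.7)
\textbf{Plan for Lemma \ref{POLSTA5}.} The strategy is to mimic the bootstrap of Lemma \ref{LEMMA4-EST} but exploiting the fact that, in the present strong-stability setting, $|\lambda_n|<|l|$ is bounded and $F_n\to 0$. First I would prove \eqref{Lemma5-EQPOL1} by multiplying the combined equation \eqref{Combining} by $\overline{y^n}$ and integrating over $(0,L)$. Thanks to the boundary conditions $y^n(0)=0$, $u^n_x(L)=0$, $y^n_x(L)=0$ and $w^n(0)=w^n(L)=0$, all boundary contributions from integration by parts vanish and I obtain
\[
\gamma\beta\int_0^L|y^n_x|^2\,dx = -\lambda_n^2\rho\int_0^L u^n\overline{y^n}\,dx + \alpha\int_0^L u^n_x\overline{y^n_x}\,dx - \delta\int_0^L w^n\overline{y^n_x}\,dx - \int_0^L\bigl(\rho f^2_n + i\lambda_n\rho f^1_n\bigr)\overline{y^n}\,dx.
\]

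I then show each term on the right tends to zero. For the first one, \eqref{POL-eq1} and \eqref{POL-eq3} give $\lambda_n u^n=-i(v^n+f^1_n)$ and $\lambda_n y^n=-i(z^n+f^3_n)$, so Cauchy--Schwarz yields
\[
\Bigl|\lambda_n^2\int_0^L u^n\overline{y^n}\,dx\Bigr|\leq \|\lambda_n u^n\|\,\|\lambda_n y^n\| \leq \bigl(\|v^n\|+\|f^1_n\|\bigr)\bigl(\|z^n\|+\|f^3_n\|\bigr),
\]
which vanishes because $\|v^n\|\to 0$ by Lemma \ref{POLSTA3}, $\|f^1_n\|\to 0$ via Poincar\'e applied to $(f^1_n)_x$, while $\|z^n\|\leq 1/\sqrt{\mu}$ remains bounded and $\|f^3_n\|\to 0$. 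The term with $u^n_x\overline{y^n_x}$ and the one with $w^n\overline{y^n_x}$ go to zero by combining the convergences $\|u^n_x\|\to 0$ (Lemma \ref{POLSTA3}) and $\|w^n\|\to 0$ (Lemma \ref{POLSTA2}) with the uniform bound $\|y^n_x\|\leq C$ coming from $\|U^n\|_{\mathcal{H}}=1$. The last term vanishes since $\lambda_n$ is bounded, $\|f^1_n\|,\|f^2_n\|\to 0$, and $\|y^n\|$ is bounded by Poincar\'e.

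For \eqref{Lemma5-EQPOL2}, I would insert \eqref{POL-eq3} into \eqref{POL-eq4} to get $-\lambda_n^2\mu y^n-\beta y^n_{xx}+\gamma\beta u^n_{xx}=\mu f^4_n+i\lambda_n\mu f^3_n$, multiply by $\overline{y^n}$ and integrate by parts (boundary terms vanish as above) to obtain
\[
\lambda_n^2\mu\int_0^L|y^n|^2\,dx = \beta\int_0^L|y^n_x|^2\,dx - \gamma\beta\int_0^L u^n_x\overline{y^n_x}\,dx - \int_0^L\bigl(\mu f^4_n+i\lambda_n\mu f^3_n\bigr)\overline{y^n}\,dx.
\]
The right-hand side tends to zero by \eqref{Lemma5-EQPOL1}, \eqref{0Lemma3-EQPOL1}, the smallness of $F_n$, and the boundedness of $\lambda_n$ and $\|y^n\|$. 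Since $\lambda_n\to l\neq 0$, this forces $\int_0^L|y^n|^2\,dx\to 0$. Finally, from \eqref{POL-eq3} written as $z^n=i\lambda_n y^n-f^3_n$,
\[
\int_0^L|z^n|^2\,dx \leq 2\lambda_n^2\int_0^L|y^n|^2\,dx + 2\int_0^L|f^3_n|^2\,dx \longrightarrow 0,
\]
which yields \eqref{Lemma5-EQPOL2}.

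The only delicate step is handling $\lambda_n^2\int u^n\overline{y^n}$: Young's inequality would produce a term $\|\lambda_n y^n\|^2$ for which I do not yet have a smallness estimate. The remedy is the splitting $\lambda_n^2=\lambda_n\cdot\lambda_n$ above, which transfers one $\lambda_n$ onto $u^n$ (where \eqref{POL-eq1} and the vanishing of $v^n$ produce smallness) and the other onto $y^n$ (where \eqref{POL-eq3} suffices for mere boundedness). Everything else is a routine consequence of the previously established Lemmas \ref{POLSTA1}--\ref{POLSTA3}.
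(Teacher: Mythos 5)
Your argument is correct and follows essentially the same route as the paper: the paper also multiplies \eqref{Combining} by $\overline{y^n}$, kills the boundary terms with $y^n(0)=0$, $u^n_x(L)=y^n_x(L)=0$, $w^n(0)=w^n(L)=0$, and controls $\lambda_n^2\int u^n\overline{y^n}$ by pairing the smallness of $\lambda_n u^n$ (equivalently of $u^n$, via Lemma \ref{POLSTA3}) with the uniform boundedness of $\lambda_n y^n$. The only cosmetic difference is in deducing $\|y^n\|_{L^2}\to 0$: the paper gets it directly from \eqref{Lemma5-EQPOL1} and the Poincar\'e inequality (since $y^n(0)=0$), whereas you re-multiply the $y$-equation by $\overline{y^n}$ and invoke $l\neq 0$; both are valid, the Poincar\'e route being slightly shorter.
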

\begin{proof}
The proof of this Lemma is divided into two steps.\\
\textbf{Step 1.} The aim of this step is to prove \eqref{Lemma5-EQPOL1}. For this aim, Multiplying \eqref{Combining} by $-\overline{y}$ and integrating by parts over $(0,L)$, we get 
\begin{equation*}\label{Lemma5-EQPOL3}
\gamma\beta \int_0^L\abs{y^n_x}^2dx=\rho\la_n^2\int_0^Lu\overline{y}dx+\alpha\int_0^Lu^n_x\overline{y^n_x}dx-\delta\int_0^Lw^n\overline{y^n_x}dx-\int_0^L\left(\rho f^2_n+i\lambda_n\rho f^1_n\right)\overline{y^n}dx.	
\end{equation*}
It follows that, 
\begin{equation}\label{L5-S1-EQ1}
\gamma\beta \int_0^L\abs{y^n_x}^2dx\leq \rho\la_n^2\int_0^L\abs{u}\abs{y}dx+\alpha\int_0^L\abs{u^n_x}\abs{y^n_x}dx+\delta\int_0^L\abs{w^n}\abs{y^n_x}dx+\int_0^L\left(\rho\abs{f_n^2}+\rho \abs{\la_n}\abs{f^1_n}\right)\abs{y^n}dx. 	
\end{equation}
Using the fact that $\la_ny^n$ is uniformly bounded in $L^2(0,L)$, \eqref{L3-S3-EQ1} and \eqref{arg-pol1}, we get 
\begin{equation}\label{L5-S1-EQ2}
\la_n^2\int_0^L\abs{u}\abs{y}dx\underset{n\to \infty}\longrightarrow 0.
\end{equation}
Using the fact that $y^n_x$ is uniformly bounded in $L^2(0,L)$ and \eqref{0Lemma3-EQPOL1}, we get 
\begin{equation}\label{L5-S1-EQ3}
\int_0^L\abs{u^n_x}\abs{y^n_x}dx\underset{n\to \infty}\longrightarrow 0.
\end{equation}
Using the fact that $y^n_x$ is uniformly bounded in $L^2(0,L)$ and \eqref{Lemma2-EQPOL1}, we get 
\begin{equation}\label{L5-S1-EQ4}
\int_0^L\abs{w^n}\abs{y^n_x}dx\underset{n\to \infty}\longrightarrow 0.
\end{equation}
Using the fact that $\|F_n\|_{\mathcal{H}}\to 0$ and the fact that $\la_ny^n$ is uniformly bounded in $L^2(0,L)$, we get 
\begin{equation}\label{L5-S1-EQ5}
\int_0^L\left(\rho\abs{f_n^2}+\rho \abs{\la_n}\abs{f^1_n}\right)\abs{y^n}dx\underset{n\to \infty}\longrightarrow 0.
\end{equation}  
Inserting \eqref{L5-S1-EQ2}-\eqref{L5-S1-EQ5} in \eqref{L5-S1-EQ1}, we obtain \eqref{Lemma5-EQPOL1}.\\
\textbf{Step 2.} The aim of this step is to prove \eqref{Lemma5-EQPOL2}. From \eqref{Lemma5-EQPOL1} and Poincar\'e inequality, we get 
\begin{equation}\label{L5-S2-EQ1}
\int_0^L\abs{y^n}^2dx\underset{n\to \infty}\longrightarrow 0.
\end{equation}
From \eqref{POL-eq3}, we get 
\begin{equation*}
\int_0^L\abs{z^n}^2dx\leq 2\abs{\la_n}^2\int_0^L\abs{y^n}^2dx+2c_p\int_0^L\abs{(f_n^3)_x}^2dx.	
\end{equation*}
Passing to the limit in the above inequality and  using \eqref{arg-pol1}, \eqref{L5-S2-EQ1} and the fact that $\|F_n\|_{\mathcal{H}}\to 0$, we get the desired result \eqref{Lemma5-EQPOL2}. The proof is thus completed.  
\end{proof}
\\
\\
\noindent \textbf{Proof of Proposition \ref{iRrhoA}.} From Lemmas \ref{POLSTA1}-\ref{POLSTA5}, we obtain $\|U^n\|_{\mathcal{H}}\to 0$ as $n\to +\infty$ which contradicts $\|U^n\|_{\mathcal{H}}=1$. Thus, \eqref{iR} is holds true. The proof is thus complete.\xqed{$\square$}
\\
\\
 \noindent \textbf{Proof of Theorem \ref{StrongStability}.} From Proposition \ref{iRrhoA}, we have $i\mathbb{R}\subset \rho(\mathcal{A})$ and consequently $\sigma(\mathcal{A})\cap i\mathbb{R}=\emptyset$. Therefore, according to Theorem \ref{App-Theorem-A.2} in the appendix, we get the $C_0-$semigroup of contraction $\left(e^{t\mathcal{A}_1}\right)_{t\geq 0}$ is strongly stable. The proof is thus complete.
 \subsection{Polynomial Stability}  
In this subsection, we will prove the polynomial stability of  system \eqref{Pizo-GP}. The main result of this section is the following theorem. 
\begin{theoreme}\label{POLY-STA-m=1}
Let $m=1$ and assume that \eqref{CONDITION-H} holds, then there exists $C>0$ such that for every $U_{0}\in D(\AA_1)$, we have 
	\begin{equation}\label{2}
		E_1(t)\leq \frac{C}{t}\|U_{0}\|^2_{D({\AA}_1)},\quad t>0.
		\end{equation}	
\end{theoreme}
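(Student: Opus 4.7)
The plan is to invoke the Borichev--Tomilov characterization (part (ii) of the theorem in the appendix): since $E_1(t) \leq C/t$ is equivalent to $\|e^{t\mathcal{A}_1}U_0\|_{\mathcal{H}} \leq C\,t^{-1/2}\|U_0\|_{D(\mathcal{A}_1)}$, we need to verify
\begin{equation*}
i\mathbb{R}\subset \rho(\mathcal{A}_1) \quad \text{and} \quad \sup_{|\lambda|\geq 1}\frac{1}{|\lambda|^{2}}\|(i\lambda I-\mathcal{A}_1)^{-1}\|_{\mathcal{L}(\mathcal{H})}<\infty.
\end{equation*}
The first condition is already established in Proposition~\ref{iRrhoA}. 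For the second, I would argue by contradiction: suppose there exist $(\lambda_n,U^n)\in \mathbb{R}^{\ast}\times D(\mathcal{A}_1)$ and $F_n\in \mathcal{H}$ with $|\lambda_n|\to \infty$, $\|U^n\|_{\mathcal{H}}=1$, and
\begin{equation*}
(i\lambda_n I-\mathcal{A}_1)U^n=F_n, \qquad |\lambda_n|^2\,\|F_n\|_{\mathcal{H}}\to 0,
\end{equation*}
and derive $\|U^n\|_{\mathcal{H}}=o(1)$, which is the desired contradiction.

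The strategy is to re-run the chain of Lemmas~\ref{POLSTA1}--\ref{POLSTA5} used for strong stability, but now track polynomial weights in $|\lambda_n|$ carefully. First, the inner product $\langle (i\lambda_n-\mathcal{A}_1)U^n,U^n\rangle_{\mathcal{H}}$ yields
\begin{equation*}
\int_0^L\!\!\int_0^{\infty}\sigma(s)|\eta^n_x|^2\,ds\,dx \lesssim \|F_n\|_{\mathcal{H}}\|U^n\|_{\mathcal{H}}=o(|\lambda_n|^{-2}),
\end{equation*}
which is the critical improvement over the strong-stability argument. Then I would revisit the estimate in Step~1 of Lemma~\ref{POLSTA2}, which reads
\begin{equation*}
\int_0^L|w^n_x|^2dx \lesssim |\lambda_n|^2\!\!\int_0^L\!\!\int_0^{\infty}\!\!\sigma(s)|\eta^n_x|^2dsdx +\!\!\int_0^L\!\!\int_0^{\infty}\!\!(-\sigma'(s))|\eta^n_x|^2dsdx+\|F_n\|_{\mathcal{H}}^2.
\end{equation*}
The $|\lambda_n|^2$ factor is precisely absorbed by the $o(|\lambda_n|^{-2})$ decay of the memory term, so $\|w^n_x\|\to 0$ and hence $\|w^n\|\to 0$ by Poincar\'e.

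Next I would follow the pattern of Lemma~\ref{POLSTA3}: from \eqref{POL-eq5}, \eqref{POL-eq1}, I obtain $\|c\Lambda^{1,n}_{xx}\|\lesssim |\lambda_n|\|U^n\|_{\mathcal{H}}+\|F_n\|_{\mathcal{H}}$ and $\|u^n_{xx}\|\lesssim |\lambda_n|+\|w^n_x\|+\|F_n\|_{\mathcal{H}}$. Multiplying the combination $i\lambda_n\delta u^n_x=-i\lambda_n w^n+c\Lambda^{1,n}_{xx}+\delta(f_n^1)_x+f_n^5$ by $-i\lambda_n^{-1}\delta\overline{u^n_x}$ and integrating by parts produces boundary terms estimated via Gagliardo--Nirenberg, yielding $\|u^n_x\|\to 0$, and then $\|v^n\|\to 0$ via \eqref{POL-eq1}. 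Finally, the coupled equation \eqref{Combining} tested against $\overline{y^n}$ gives $\|y^n_x\|\to 0$ exactly as in Lemma~\ref{POLSTA5}, and $\|z^n\|\to 0$ follows from \eqref{POL-eq3}. Assembling these estimates contradicts $\|U^n\|_{\mathcal{H}}=1$, completing the proof.

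The main obstacle is the bookkeeping of $|\lambda_n|$-powers: the single-equation estimate for $\|w^n_x\|$ carries an unavoidable $|\lambda_n|^2$ multiplier against $\int\sigma(s)|\eta^n_x|^2\,ds\,dx$. This is what forces the condition $\|F_n\|_{\mathcal{H}}=o(|\lambda_n|^{-2})$ and dictates the optimal rate $\ell=2$, hence $t^{-1}$ for the energy. A secondary delicate point is controlling the boundary term $\Lambda^{1,n}_x(0)\,u^n_x(0)$ at the right order; the Gagliardo--Nirenberg interpolation combined with \eqref{L3-S1-EQ1}--\eqref{L3-S1-EQ2} produces a factor $|\lambda_n|$ inside, which is precisely balanced by the $|\lambda_n|^{-1}$ prefactor, so no additional $\lambda_n$ slack is needed beyond $\|F_n\|_{\mathcal{H}}=o(|\lambda_n|^{-2})$.
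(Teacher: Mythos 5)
Your overall strategy coincides with the paper's: both verify the Borichev--Tomilov condition \eqref{COND-POL2} by contradiction, use the dissipation identity to obtain $\int_0^L\int_0^{\infty}\sigma(s)\abs{\eta^n_x}^2\,ds\,dx=o(\abs{\la_n}^{-2})$, absorb the $\abs{\la_n}^{2}$ factor in the estimate for $\|w^n_x\|$, and then propagate the decay to $u^n_x$ and $y^n_x$ with the same multipliers and Gagliardo--Nirenberg boundary estimates; your identification of the memory term as the source of the exponent $2$ is exactly the paper's mechanism.

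There is, however, one step in your outline that does not follow as written: you claim $\|v^n\|_{L^2}\to 0$ ``via \eqref{1POL-eq1}'' immediately after obtaining $\|u^n_x\|_{L^2}\to 0$. Since $v^n=i\la_n u^n-\la_n^{-2}f^1_n$ and $\abs{\la_n}\to\infty$, the convergence $\|u^n\|\to 0$ coming from Poincar\'e does not control $\abs{\la_n}\,\|u^n\|$; one needs the separate estimate $\|\la_n u^n\|_{L^2}=o(1)$, which the paper derives in Lemma \ref{POLSTA4} by testing \eqref{NEWCombining} against $-\overline{u}$. That estimate is also indispensable for the $y$-component: the term $\rho\la_n^2\int_0^L\abs{u^n}\abs{y^n}\,dx$ arising when \eqref{NEWCombining} is tested against $-\overline{y}$ (Lemma \ref{poly-yx}) is bounded by $\|\la_n u^n\|\,\|\la_n y^n\|$, and only the $o(1)$ decay of $\|\la_n u^n\|$ makes it vanish, since $\|\la_n y^n\|$ is merely bounded. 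Analogously, $\|z^n\|\to 0$ requires $\|\la_n y^n\|=o(1)$, obtained by a further multiplier argument on the $y$-equation (Lemma \ref{POLSTA6}). These additions use exactly the technique you already employ, so the gap is easily filled, but without them the contradiction $\|U^n\|_{\mathcal{H}}=o(1)$ is not reached.
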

\noindent According to Theorem \ref{bt} in appendix, to prove Theorem \ref{POLY-STA-m=1}, we still need to prove the following two conditions 
\begin{equation}\label{COND-POL1}\tag{{\rm POL1}}
i\mathbb{R}\subset \rho(\mathcal{A}_1),	
\end{equation}
\begin{equation}\label{COND-POL2}\tag{{\rm POL2}}
\limsup_{\abs{\lambda}\to \infty} \frac{1}{\abs{\la}^{2}}\|(i\lambda I-\mathcal{A}_1)^{-1}\|<\infty.
\end{equation}
From Proposition \ref{iRrhoA}, we obtain condition \eqref{COND-POL1}. Next, we will prove condition \eqref{COND-POL2} by a contradiction argument. For this purpose, suppose that \eqref{COND-POL2} is false, then there exists $\left\{(\lambda^n,U^n:=(u^n,v^n,y^n,z^n,w^n,\eta^n(\cdot,s)))\right\}_{n\geq 1}\subset \mathbb{R}^{\ast}\times D(\mathcal{A}_1)$ with
\begin{equation}\label{arg-pol2}\tag{{\rm CA2}}
\abs{\la_n}\to \infty\quad \text{and}\quad \|U^n\|_{\mathcal{H}}=\|(u^n,v^n,y^n,z^n,w^n,\eta^n(\cdot,s))\|_{\mathcal{H}}=1,	
\end{equation}
such that 
\begin{equation}\label{Pol-Comp}
\lambda_n^2\left(i\la_nI-{\mathcal{A}}_1\right)U^n=F_n:=\left(f^1_n,f^2_n,f^3_n,f^4_n,f^5_n,f^6_n(\cdot,s)\right)^{\top}\to 0\quad \text{in}\quad \mathcal{H}.		
\end{equation}
For simplicity, we drop the index $n$. Equivalently, from \eqref{Pol-Comp}, we have 
\begin{eqnarray}
i\lambda u-v&=&\lambda^{-2}f^1\quad \text{in}\ \ H_L^1(0,L),\label{1POL-eq1}\\
i\lambda\rho v-\alpha u_{xx}+\gamma\beta y_{xx}+\delta \omega_x&=&\rho \lambda^{-2}f^2\quad \text{in}\ \ L^2(0,L),\label{1POL-eq2}\\
i\lambda y-z&=&\lambda^{-2}f^3\quad \text{in}\ \ H_L^1(0,L),\label{1POL-eq3}\\ 
i\lambda\mu z-\beta y_{xx}+\gamma\beta u_{xx}&=&\mu \lambda^{-2}f^4\quad \text{in}\ \ L^2(0,L),\label{1POL-eq4}\\
i\lambda w-c\Lambda^{1}_{xx}+\delta v_x&=&\lambda^{-2}f^5\quad \text{in}\ \ L^2(0,L),\label{1POL-eq5}\\
i\lambda \eta+\eta_s-w&=&\lambda^{-2}f^6(\cdot,s)\quad \text{in}\ \ W.\label{1POL-eq6}
\end{eqnarray}
Here, we will check the condition \eqref{COND-POL2} by finding a contradiction with \eqref{arg-pol2} such as $\|U\|_{\mathcal{H}}=o(1)$. For clarity, we divide the proof into several lemmas. 
%

\begin{lemma}\label{EST1-POL1}
Let $m=1$ and assume that \eqref{CONDITION-H} holds. The solution $(u,v,y,z,w,\eta)\in D(\mathcal{A}_1)$ of \eqref{1POL-eq1}-\eqref{1POL-eq6} satisfies the following estimations 
\begin{equation}\label{1EST1-POL1}
-\int_0^L\int_0^{\infty}\sigma'(s)\abs{\eta_x}^2dsdx=o(\la^{-2})\quad \text{and}\quad \int_0^L\int_0^{\infty}\abs{\eta_x}^2dsdx=o(\la^{-2}).	
\end{equation}	
\end{lemma}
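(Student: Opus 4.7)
The plan is to mimic the strategy of Lemma \ref{POLSTA1}, but carefully account for the $\la^2$ scaling introduced by \eqref{Pol-Comp}. The key observation is that with $m=1$, the only dissipation term in $\Re\langle \mathcal{A}_1 U, U\rangle_{\mathcal{H}}$ is precisely the memory term involving $\sigma'(s)|\eta_x|^2$, so controlling the dissipation directly yields control on $\eta$.

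First I would take the inner product of \eqref{Pol-Comp} with $U$ in $\mathcal{H}$. Since $i\la\|U\|_{\mathcal{H}}^2\in i\R$, taking real parts eliminates it and leaves
$$
-\la^2\,\Re\langle \mathcal{A}_1 U, U\rangle_{\mathcal{H}} \;=\; \Re\langle F, U\rangle_{\mathcal{H}}.
$$
By the dissipativity computation (specializing \eqref{dissipative} to $m=1$, equivalently the $m=1$ case of Lemma \ref{Energy}), the left-hand side equals
$$
-\frac{c\la^2}{2}\int_0^L\!\int_0^{\infty}\sigma'(s)|\eta_x|^2\,ds\,dx,
$$
while the right-hand side is bounded by $\|F\|_{\mathcal{H}}\|U\|_{\mathcal{H}}=o(1)$, using \eqref{arg-pol2} and the fact that $F\to 0$ in $\mathcal{H}$. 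Dividing through by $\la^2$ gives the first estimate of \eqref{1EST1-POL1}.

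For the second estimate, I would invoke the Dafermos condition $\sigma'(s)\leq -d_\sigma \sigma(s)$ from \eqref{CONDITION-H}, which yields
$$
\int_0^L\!\int_0^{\infty}\sigma(s)|\eta_x|^2\,ds\,dx \;\leq\; -\frac{1}{d_\sigma}\int_0^L\!\int_0^{\infty}\sigma'(s)|\eta_x|^2\,ds\,dx \;=\; o(\la^{-2}),
$$
by combining with the previous step. This is the desired control on the memory norm.

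There is no real obstacle here; the lemma is essentially a rescaled dissipation estimate, and the proof is almost identical to Lemma \ref{POLSTA1} with the single bookkeeping modification that the right-hand side of the resolvent equation is divided by $\la^2$. The genuine difficulty of the polynomial-stability proof will come in the subsequent lemmas, where one must leverage only this weak $o(\la^{-2})$ information on $\eta$ (rather than the $o(1)$ bound of the strong-stability argument) to recover decay of $\|U\|_{\mathcal{H}}$ itself.
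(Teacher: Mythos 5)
Your proposal is correct and follows exactly the route the paper takes: the paper's proof of this lemma simply says ``proceed as in Lemma \ref{POLSTA1},'' i.e.\ take the real part of the inner product of the (rescaled) resolvent identity with $U$, identify the dissipation $-\Re\langle \mathcal{A}_1U,U\rangle_{\mathcal H}=-\tfrac{c}{2}\int_0^L\int_0^{\infty}\sigma'(s)\abs{\eta_x}^2\,ds\,dx$, bound it by $\abs{\la}^{-2}\|F\|_{\mathcal H}\|U\|_{\mathcal H}=o(\la^{-2})$, and then apply the Dafermos condition $\sigma'\leq -d_\sigma\sigma$ to pass to the $\sigma$-weighted norm. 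Your bookkeeping of the $\la^2$ factor is exactly the intended modification, so there is nothing to add.
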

\begin{proof}
By proceeding the same argument used in Lemma \ref{POLSTA1}, we get \eqref{1EST1-POL1}
.\end{proof}
\begin{lemma}\label{0POLSTA3}
Let $m=1$ and assume that \eqref{CONDITION-H} holds. The solution $(u,v,y,z,w,\eta)\in D(\mathcal{A}_1)$ of \eqref{1POL-eq1}-\eqref{1POL-eq6} satisfies  the following estimations 
\begin{equation}\label{Lemma3-EQPOL1}
\int_0^L\abs{w_x}^2dx =o(1)\quad \text{and}\quad \int_0^L\abs{w}^2dx =o(1).
\end{equation}
\end{lemma}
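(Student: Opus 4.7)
The statement parallels Lemma \ref{POLSTA2} in the strong-stability argument, so the plan is to run the same energy identity obtained from the history equation \eqref{1POL-eq6}, but with careful tracking of how the improved decay rates from Lemma \ref{EST1-POL1} (namely $o(\lambda^{-2})$ instead of $o(1)$) interact with the blow-up factor $|\lambda|^2$ coming from $i\lambda\eta$.

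First, I would differentiate \eqref{1POL-eq6} in $x$ to obtain
\begin{equation*}
w_x = i\lambda\,\eta_x + \eta_{sx} - \lambda^{-2}(f^6)_x,
\end{equation*}
then multiply by $\sigma(s)\overline{w_x}$ and integrate over $(0,L)\times(0,\infty)$. An integration by parts in $s$, combined with the boundary condition $\eta(\cdot,0)=0$ in $D(\mathcal{A}_1)$ and $\sigma(\infty)=0$, turns the $\eta_{sx}$ term into $-\int\!\!\int \sigma'(s)\,\eta_x\overline{w_x}\,ds\,dx$. This yields, after using $\int_0^\infty\sigma(s)\,ds=g(0)$, the identity
\begin{equation*}
g(0)\!\int_0^L|w_x|^2\,dx = i\lambda\!\int_0^L\!\!\int_0^\infty\!\sigma(s)\eta_x\overline{w_x}\,ds\,dx - \!\int_0^L\!\!\int_0^\infty\!\sigma'(s)\eta_x\overline{w_x}\,ds\,dx - \lambda^{-2}\!\int_0^L\!\!\int_0^\infty\!\sigma(s)(f^6)_x\overline{w_x}\,ds\,dx,
\end{equation*}
which is exactly the analogue of \eqref{L2-S1-EQ3} with an extra $\lambda^{-2}$ in front of the source term.

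Next I would apply Cauchy--Schwarz and Young's inequality to each term on the right, exactly as in \eqref{L2-S1-EQ5}--\eqref{L2-S1-EQ7}, with small parameters chosen so that the $\|w_x\|^2$ contributions can be absorbed into the left-hand side. The crucial observation is that the first term is bounded by
\begin{equation*}
\tfrac{|\lambda|^2}{2\varepsilon}\!\int_0^L\!\!\int_0^\infty\!\sigma(s)|\eta_x|^2\,ds\,dx + \tfrac{\varepsilon g(0)}{2}\!\int_0^L|w_x|^2\,dx,
\end{equation*}
and the first factor is $|\lambda|^2 \cdot o(\lambda^{-2}) = o(1)$ thanks to the second half of \eqref{1EST1-POL1}. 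The second term produces $\tfrac{\sigma(0)}{2\varepsilon_1}\!\int\!\!\int(-\sigma'(s))|\eta_x|^2\,ds\,dx = o(\lambda^{-2}) = o(1)$ via the first half of \eqref{1EST1-POL1}. The third term is bounded by a fixed constant times $\lambda^{-4}\|f^6\|_W^2 + \varepsilon_2 g(0)\|w_x\|^2$, and the first piece is $o(1)$ since $\|F\|_{\mathcal{H}}\to 0$.

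Choosing $\varepsilon,\varepsilon_1,\varepsilon_2$ so that the absorbed coefficient on the right is strictly less than $g(0)$ gives $\int_0^L |w_x|^2\,dx = o(1)$. Finally, since $w\in H_0^1(0,L)$, Poincar\'e's inequality delivers $\int_0^L |w|^2\,dx \leq c_p\int_0^L |w_x|^2\,dx = o(1)$. No serious obstacle is expected here: the proof is essentially a transcription of the argument for Lemma \ref{POLSTA2}, the only nontrivial point being the bookkeeping that the $o(\lambda^{-2})$ decay of $\int\!\!\int\sigma(s)|\eta_x|^2\,ds\,dx$ is exactly sharp enough to defeat the $|\lambda|^2$ prefactor.
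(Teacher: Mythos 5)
Your proposal is correct and follows essentially the same route as the paper: the paper's proof simply invokes Step 1 of Lemma \ref{POLSTA2} to derive the estimate \eqref{Pol-L2-S1-EQ1} (with the extra $\abs{\la}^{-4}$ on the source term) and then combines the $o(\la^{-2})$ bounds of Lemma \ref{EST1-POL1} with the $\abs{\la}^2$ prefactor and Poincar\'e's inequality, exactly as you do. Your write-up just makes explicit the multiplier computation and the choice of Young parameters that the paper leaves implicit by reference.
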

\begin{proof}
By using the same techniques  of step 1 in Lemma \ref{POLSTA2}, we get 	
\begin{equation}\label{Pol-L2-S1-EQ1}
\begin{array}{l}
\displaystyle
\frac{g(0)}{2}\int_0^L\abs{w_x}^2dx\leq 2\abs{\la}^2\int_0^L\int_0^{\infty}\sigma(s)\abs{\eta_x}^2dsdx+\frac{\sigma(0)}{g(0)}\int_0^L\int_0^{\infty}(-\sigma'(s))\abs{\eta_x}^2dsdx\\
\displaystyle 
+2\abs{\la}^{-4}\int_0^L\int_0^{\infty}\sigma(s)\abs{(f^6)
_x}^2dsdx.
\end{array}
\end{equation}
Using the fact that $\|F\|_{\mathcal{H}}=o(1)$, Lemma \ref{EST1-POL1} in \eqref{Pol-L2-S1-EQ1}, we get the first estimation in \eqref{Pol-L2-S1-EQ1}. Applying Poincar\'e inequality, we get the second estimation in \eqref{Pol-L2-S1-EQ1}. The proof has been completed.  
\end{proof}
\begin{lemma}\label{1POLSTA3}
Let $m=1$ and assume that \eqref{CONDITION-H} holds. The solution $(u,v,y,z,w,\eta)\in D(\mathcal{A}_1)$ of \eqref{1POL-eq1}-\eqref{1POL-eq6} satisfies  the following estimation
\begin{equation}\label{Lemma4-EQPOL1}
\int_0^L\abs{u_x}^2dx=o(1). 	
\end{equation}
\end{lemma}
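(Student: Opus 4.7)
The plan is to mimic Step 2 in the proof of Lemma \ref{POLSTA3} (strong stability), adapted to the polynomial setting where the right-hand side of the resolvent equation carries an extra factor $\lambda^{-2}$. As in the strong-stability argument, I would start by combining \eqref{1POL-eq1} and \eqref{1POL-eq5} to eliminate $v_x$, obtaining
\begin{equation*}
i\lambda \delta u_x \;=\; -\,i\lambda w + c\Lambda^{1}_{xx} + \lambda^{-2}\bigl(\delta (f^{1})_x + f^{5}\bigr).
\end{equation*}
Multiplying by $-i\lambda^{-1}\delta\overline{u_x}$, integrating by parts over $(0,L)$, and using that $u_x(L)=0$, I would reach the basic identity
\begin{equation*}
\delta \int_0^L |u_x|^2 dx \;\leq\; \delta\!\int_0^L |w||u_x|dx + c|\lambda|^{-1}\!\int_0^L |\Lambda^{1}_x||u_{xx}|dx + c|\lambda|^{-1}|\Lambda^{1}_x(0)||u_x(0)| + O(|\lambda|^{-3}\|F\|_{\mathcal H}).
\end{equation*}

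I would then show each term on the right-hand side is $o(1)$. The first term is handled immediately by Cauchy--Schwarz, the boundedness of $\|u_x\|$, and Lemma \ref{0POLSTA3}, which gives $\|w\|=o(1)$. For the interior $\Lambda^{1}_x$ term, I would derive the two auxiliary estimates analogous to \eqref{L3-S1-EQ1}--\eqref{L3-S1-EQ2}: from \eqref{1POL-eq5}--\eqref{1POL-eq1} and \eqref{1POL-eq2} (using $\alpha=\alpha_1+\gamma^2\beta$ and the bounds $\sqrt\rho\|v\|,\sqrt\mu\|z\|\leq \|U\|_{\mathcal H}=1$) one gets $\|\Lambda^{1}_{xx}\|=O(|\lambda|)$ and $\|u_{xx}\| \lesssim |\lambda| + \|w_x\| + o(1) = O(|\lambda|)$. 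Combined with $\|\Lambda^{1}_x\|^2 \leq g(0)\!\int_0^L\!\int_0^\infty \sigma(s)|\eta_x|^2 ds\, dx = o(\lambda^{-2})$ from Lemma \ref{EST1-POL1} (since for $m=1$ we have $\Lambda^{1}=\int_0^\infty \sigma(s)\eta(s)ds$), this yields
\begin{equation*}
|\lambda|^{-1}\|\Lambda^{1}_x\|\|u_{xx}\| \;\lesssim\; |\lambda|^{-1}\cdot o(|\lambda|^{-1})\cdot O(|\lambda|) \;=\; o(|\lambda|^{-1}).
\end{equation*}

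The main obstacle — just as in the strong-stability proof — is the boundary term $|\lambda|^{-1}|\Lambda^{1}_x(0)||u_x(0)|$, which I would control by Gagliardo--Nirenberg. Indeed,
\begin{equation*}
|\Lambda^{1}_x(0)| \;\lesssim\; \|\Lambda^{1}_{xx}\|^{1/2}\|\Lambda^{1}_x\|^{1/2} + \|\Lambda^{1}_x\| \;\lesssim\; O(|\lambda|^{1/2})\cdot o(|\lambda|^{-1/2}) + o(|\lambda|^{-1}) = o(1),
\end{equation*}
while $|u_x(0)| \lesssim \|u_{xx}\|^{1/2}\|u_x\|^{1/2} + \|u_x\| = O(|\lambda|^{1/2})$, so the product is $|\lambda|^{-1}\cdot o(1)\cdot O(|\lambda|^{1/2}) = o(|\lambda|^{-1/2})$. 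Inserting these four estimates into the basic identity delivers $\int_0^L|u_x|^2 dx = o(1)$, which completes the proof. The delicate point is that the $\lambda^{-2}$ scaling of $F$ is exactly what compensates the unfavorable $O(|\lambda|)$ growth of $\|\Lambda^{1}_{xx}\|$ and $\|u_{xx}\|$, so all the losses are absorbed by the sharper smallness $\|\eta_x\|_W = o(|\lambda|^{-1})$ coming from Lemma \ref{EST1-POL1}.
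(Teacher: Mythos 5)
Your argument is correct and follows essentially the same route as the paper: the same elimination of $v_x$ via \eqref{1POL-eq1} and \eqref{1POL-eq5}, the same multiplier $-i\lambda^{-1}\overline{u_x}$ with integration by parts, the same auxiliary bounds $\|\Lambda^{1}_{xx}\|=O(|\lambda|)$ and $\|u_{xx}\|=O(|\lambda|)$, and the same Gagliardo--Nirenberg treatment of the boundary term combined with $\|\Lambda^1_x\|=o(|\lambda|^{-1})$ from Lemma \ref{EST1-POL1}. Your bookkeeping of the boundary term is in fact slightly sharper (you get $|\Lambda^1_x(0)|=o(1)$ where the paper only records $o(|\lambda|^{1/2})$), but both yield the required $o(1)$ conclusion.
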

\begin{proof}
Similar to Lemma \ref{POLSTA3}, we 	prove that 
\begin{equation}\label{1Lemma4-EQPOL1}
\left\{\begin{array}{l}
\|\Lambda_{xx}^{1}\|_{L^2(0,L)}\lesssim \abs{\lambda}\|U\|_{\mathcal{H}}+\|F\|_{\mathcal{H}}\leq O(\abs{\lambda}),\\[0.1in]
\|u_{xx}\|_{L^2(0,L)}\lesssim \abs{\lambda}\|U\|_{\mathcal{H}}+\|w_x\|_{L^2(0,L)}+\|F\|_{\mathcal{H}}\leq O(\abs{\lambda}). 	
\end{array}
\right.
\end{equation}
Now, multiplying \eqref{1POL-eq4}  by $-i\la^{-1}\delta \overline{u_x}$, integrating by parts over $(0,L)$ and using \eqref{1POL-eq1}, we get 
\begin{equation}\label{2Lemma4-EQPOL1}
\delta \int_0^L\abs{u_x}^2dx=-\delta\int_0^Lw\overline{u_x}dx+i\la^{-1}c\int_0^L\Lambda^{1}_x\overline{u_{xx}}dx+i\la^{-1}c\Lambda_x(0)\overline{u_x(0)}-i\la^{-3}\int_0^L(\delta (f^1_n)_x+f^5_n)\overline{u^n_x}dx.	
\end{equation}
Using the facts that $u_x$ is uniformly bounded in $L^2(0,L)$, Lemma \ref{EST1-POL1}, \eqref{Lemma3-EQPOL1}, \eqref{1Lemma4-EQPOL1} and $\|F\|_{\mathcal{H}}=o(1)$ , we get 
\begin{equation}\label{1POLSTA31}
\left|\int_0^Lw\overline{u_x}dx\right|=o(1),\quad \left|\la^{-1}\int_0^L\Lambda^{1}_x\overline{u_{xx}}dx\right|=o(\abs{\la}^{-1})\ \quad\text{and}\quad \left|\la^{-3}\int_0^L(\delta (f^1_n)_x+f^5_n)\overline{u^n_x}dx\right|=o(\abs{\la}^{-3}). 
\end{equation}
 Using Gagliardo-Nirenberg inequality, \eqref{1Lemma4-EQPOL1}, \eqref{1EST1-POL1} and the fact that $u_x$ is uniformly bounded in $L^2(0,L)$,  we get 
 \begin{equation}\label{1POLSTA32}
\left|\la^{-1}\Lambda_x^1(0)u_x(0)\right|\lesssim \abs{\la}^{-1}\underbrace{\left(\|\Lambda_{xx}^{1}\|^{\frac{1}{2}}\|\Lambda^1_x\|^{\frac{1}{2}}+\|\Lambda^1_x\|\right)}_{o(\abs{\lambda}^{\frac{1}{2}})}\underbrace{\left(\|u_{xx}\|^{\frac{1}{2}}\|u_x\|^{\frac{1}{2}}+\|u_x\|\right)}_{O(\abs{\la}^{\frac{1}{2}})}=o(1).	
 \end{equation}
 Inserting \eqref{1POLSTA31} and \eqref{1POLSTA32} in \eqref{2Lemma4-EQPOL1}, we get the desired result \eqref{Lemma4-EQPOL1}. The proof has been completed.  
\end{proof}
\\

\noindent Inserting \eqref{1POL-eq1} in \eqref{1POL-eq2}, we get 
\begin{equation}\label{NEWCombining}
-\la^2\rho u-\alpha u_{xx}+\gamma \beta y_{xx}+\delta w_x=\rho \la^{-2}f^2+i\la^{-1}\rho f^1.	
\end{equation}

\begin{lemma}\label{POLSTA4}
Let $m=1$ and assume that \eqref{CONDITION-H} holds. The solution $(u,v,y,z,w,\eta)\in D(\mathcal{A}_1)$ of \eqref{1POL-eq1}-\eqref{1POL-eq6} satisfies  the following estimation
\begin{equation}\label{1Lemma4-EQ1}
\int_0^L\abs{\la u}^2dx=o(1).	
\end{equation}
\end{lemma}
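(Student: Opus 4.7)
The natural approach is to test the ``combined'' second‐order elliptic equation \eqref{NEWCombining} against $\overline{u}$ and integrate by parts over $(0,L)$, exploiting the boundary conditions $u(0)=0$, $u_x(L)=0$, $y(0)=0$, $y_x(L)=0$ built into $D(\mathcal{A}_1)$. This converts the $\lambda^2$ term into the target $\int_0^L|\lambda u|^2 dx$ and produces boundary‐free bulk integrals involving quantities that have already been proven small in the preceding lemmas.

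Concretely, the plan is: multiply \eqref{NEWCombining} by $\overline{u}$, integrate over $(0,L)$, and perform an integration by parts on $-\alpha u_{xx}\overline{u}$ and $\gamma\beta y_{xx}\overline{u}$ (the boundary contributions vanish by the conditions above). This yields the identity
\begin{equation*}
\rho\int_0^L |\lambda u|^2 dx = \alpha\int_0^L |u_x|^2 dx - \gamma\beta\int_0^L y_x\,\overline{u_x}\,dx + \delta\int_0^L w_x\,\overline{u}\,dx - \rho\lambda^{-2}\int_0^L f^2\overline{u}\,dx - i\lambda^{-1}\rho\int_0^L f^1\overline{u}\,dx.
\end{equation*}
I will then bound each term on the right and show each is $o(1)$.

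The first term is $o(1)$ by Lemma \ref{1POLSTA3}. For the second, $\|y_x\|_{L^2}$ is controlled by $\|U\|_{\mathcal H}=1$ via $\|y_x\|\le\|\gamma u_x-y_x\|+\gamma\|u_x\|\lesssim 1$, so Cauchy--Schwarz combined with $\|u_x\|=o(1)$ from Lemma \ref{1POLSTA3} gives $o(1)$. The third term is handled by Cauchy--Schwarz together with $\|w_x\|=o(1)$ from Lemma \ref{0POLSTA3} and the Poincar\'e bound $\|u\|\leq C_p\|u_x\|=o(1)$ (again from Lemma \ref{1POLSTA3}), so this is in fact $o(1)\cdot o(1)$. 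Finally, the two source terms carry explicit negative powers of $\lambda$; using $\sqrt{\rho}\|f^2\|\le\|F\|_{\mathcal H}=o(1)$, $\|f^1\|\le C_p\|f^1_x\|\lesssim\|F\|_{\mathcal H}=o(1)$, and the fact that $\|u\|$ is bounded, they are $o(\lambda^{-2})$ and $o(\lambda^{-1})$ respectively, hence $o(1)$ since $|\lambda|\to\infty$. Dividing by $\rho>0$ yields \eqref{1Lemma4-EQ1}.

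I do not anticipate a real obstacle: unlike the exponential--stability lemmas, no delicate Gagliardo--Nirenberg trace estimate is needed here, because the boundary terms from the two integrations by parts vanish identically thanks to the mixed Dirichlet--Neumann conditions. The only point to be careful about is to use the already--proved $\|u_x\|=o(1)$ rather than a weaker bound, so that the cross term $\int y_x\overline{u_x}dx$ does not merely stay bounded; and to invoke Poincar\'e at the right places to promote ``$u_x$ small'' to ``$u$ small''.
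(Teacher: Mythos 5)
Your proposal is correct and follows essentially the same route as the paper: multiplying \eqref{NEWCombining} by $\overline{u}$, integrating by parts (with vanishing boundary terms), and absorbing each right-hand term using $\|u_x\|=o(1)$, $\|w_x\|=o(1)$, the uniform bound on $\|y_x\|$, and $\|F\|_{\mathcal{H}}=o(1)$. The only cosmetic difference is that you bound $\|u\|$ via Poincar\'e and $\|u_x\|=o(1)$, whereas the paper uses the uniform boundedness of $\lambda u$ to gain an extra factor $|\lambda|^{-1}$; both suffice.
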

\begin{proof}
Multiplying \eqref{NEWCombining} by $-\overline{u}$ integrating by parts over $(0,L)$, we get 
\begin{equation*}
\rho\int_0^L\abs{\la u}^2dx=\alpha \int_0^L\abs{u_x}^2dx-\gamma \beta \int_0^Ly_x\overline{u_x}dx+\delta \int_0^Lw_x\overline{u}dx-\int_0^L\left(\frac{\rho f^2}{\la^2}+\frac{i\rho f^1}{\la}\right)\overline{u}dx.	
\end{equation*}
It follows that 
\begin{equation}\label{L4-eq1}
\rho\int_0^L\abs{\la u}^2dx\leq \alpha \int_0^L\abs{u_x}^2dx+\gamma\beta \int_0^L\abs{y_x}\abs{u_x}dx+\delta\int_0^L\abs{w_x}\abs{u}dx+\int_0^L\left(\frac{\rho\abs{f^2}}{\la^2}+\frac{\rho \abs{f^1}}{\abs{\lambda}}\right)\abs{u}dx.
\end{equation}
Using the fact that $y_x$ and $\la u$ are uniformly bounded in $L^2(0,L)$, \eqref{Lemma3-EQPOL1}, we get 
\begin{equation}\label{L4-eq2}
\int_0^L\abs{y_x}\abs{u_x}dx=o(1)\quad \text{and}\quad \int_0^L\abs{w_x}\abs{u}dx=\frac{o(1)}{\la}.	
\end{equation}
Using the fact that $\|F\|_{\mathcal{H}}=o(1)$ and $\la u$ is uniformly bounded in $L^2(0,L)$, we get 
\begin{equation}\label{L4-eq3}
\int_0^L\left(\frac{\rho\abs{f^2}}{\la^2}+\frac{\rho \abs{f^1}}{\abs{\lambda}}\right)\abs{u}dx=\frac{o(1)}{\la^2}. 	
\end{equation}
Inserting \eqref{L4-eq2} and \eqref{L4-eq3} in \eqref{L4-eq1} and using \eqref{Lemma3-EQPOL1}, we obtain \eqref{1Lemma4-EQ1}. The proof is thus completed. 
\end{proof}
\begin{lemma}\label{poly-yx}
Let $m=1$ and assume that \eqref{CONDITION-H} holds. The solution $(u,v,y,z,w,\eta)\in D(\mathcal{A}_1)$ of \eqref{1POL-eq1}-\eqref{1POL-eq6} satisfies  the following estimation
\begin{equation}\label{1poly-yx}
\int_0^L\abs{y_x}^2dx=o(1).	
\end{equation}
\end{lemma}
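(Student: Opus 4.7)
The plan is to exploit the second-order-in-time identity \eqref{NEWCombining}, test it against $-\bar{y}$, and use the estimates already established in Lemmas \ref{EST1-POL1}--\ref{POLSTA4}. Concretely, multiplying \eqref{NEWCombining} by $-\bar{y}$ and integrating by parts over $(0,L)$, the boundary contributions vanish thanks to $y(0)=0$ and $u_x(L)=y_x(L)=0$ (built into $D(\mathcal{A}_1)$), giving
\begin{equation*}
\gamma\beta \int_0^L |y_x|^2 dx = -\rho \int_0^L (\lambda u)\overline{(\lambda y)}\, dx + \alpha \int_0^L u_x \overline{y_x}\, dx + \delta \int_0^L w_x \bar{y}\, dx - \int_0^L \left(\rho \lambda^{-2} f^2 + i\lambda^{-1}\rho f^1\right) \bar{y}\, dx.
\end{equation*}
It then suffices to show that each of the four terms on the right is $o(1)$.

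Before that, I need the a priori bound that $\|\lambda y\|_{L^2(0,L)} = O(1)$. This follows from \eqref{1POL-eq3} written as $\lambda y = -iz - i\lambda^{-2} f^3$, together with $\mu\|z\|^2 \leq \|U\|_\mathcal{H}^2 = 1$ and $\|f^3\| \leq \|F\|_\mathcal{H} = o(1)$. The estimates $\|y_x\|$, $\|y\| = O(1)$ are immediate from $\|U\|_\mathcal{H} = 1$ and Poincar\'e's inequality. With these in hand, I apply Cauchy--Schwarz term by term: the first term is controlled by $\rho\|\lambda u\|\|\lambda y\| = o(1)$ by Lemma \ref{POLSTA4}; the second by $\alpha\|u_x\|\|y_x\| = o(1)$ by Lemma \ref{1POLSTA3}; the third by $\delta\|w_x\|\|y\| = o(1)$ by Lemma \ref{0POLSTA3}; and the fourth by $\rho\lambda^{-2}\|f^2\|\|y\| + \rho\lambda^{-1}\|f^1\|\|y\| = o(\lambda^{-1})$, where I use Poincar\'e's inequality together with $\sqrt{\alpha_1}\|f^1_x\|\leq \|F\|_\mathcal{H} = o(1)$ to bound $\|f^1\|$.

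This approach is essentially Step 1 of Lemma \ref{LEMMA4-EST}, but substantially simplified in the polynomial regime, because we already have $\|\lambda u\| = o(1)$ and $\|u_x\| = o(1)$ in hand from the previous lemmas. Consequently, and unlike in the exponential-stability proof, there is no need to couple with \eqref{1POL-eq4} to produce a matching bound on $\|\lambda y\|$; mere uniform boundedness suffices. I do not anticipate any genuine obstacle---the only point requiring attention is the $\lambda$-bookkeeping in the forcing terms, where the extra $\lambda^{-2}$ factor present in \eqref{Pol-Comp} easily absorbs the lone $|\lambda|$ arising from substituting $v = i\lambda u - \lambda^{-2} f^1$ into \eqref{1POL-eq2}.
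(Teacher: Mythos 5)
Your proof is correct and follows essentially the same route as the paper: multiply \eqref{NEWCombining} by $-\overline{y}$, integrate by parts, and absorb each term via Cauchy--Schwarz using $\|\la u\|=o(1)$, $\|u_x\|=o(1)$, Lemma \ref{0POLSTA3}, the uniform bounds on $\la y$ and $y_x$, and $\|F\|_{\mathcal{H}}=o(1)$. The only cosmetic difference is that you estimate the thermal coupling as $\|w_x\|\,\|y\|$ rather than integrating by parts once more to get $\|w\|\,\|y_x\|$ as the paper does; both are $o(1)$ by Lemma \ref{0POLSTA3}.
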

\begin{proof}
The idea of proof is similar to Lemma \ref{POLSTA5}. Multiplying \eqref{NEWCombining} by $-\overline{y}$,  integrating by parts over $(0,L)$ and using the facts that $\la y$ and $y_x$ are uniformly bounded in $L^2(0,L)$, \eqref{1Lemma4-EQ1}, \eqref{Lemma4-EQPOL1} and \eqref{Lemma3-EQPOL1} and $\|F\|_{\mathcal{H}}=o(1)$, we get 
 \begin{equation*}
\gamma\beta \int_0^L\abs{y_x}^2dx\leq \underbrace{\rho\la^2\int_0^L\abs{u}\abs{y}dx}_{o(1)}+\underbrace{\alpha\int_0^L\abs{u^n_x}\abs{y^n_x}dx}_{o(1)}+\underbrace{\delta\int_0^L\abs{w^n}\abs{y^n_x}dx}_{o(1)}+\underbrace{\int_0^L\left(\rho\abs{\la}^{-2}\abs{f^2}+\rho \abs{\la}^{-1}\abs{f^1}\right)\abs{y}dx}_{o(\abs{\la}^{-2})}. 	
\end{equation*}  
The proof has been completed.
\end{proof}
\begin{lemma}\label{POLSTA6}
Let $m=1$ and assume that \eqref{CONDITION-H} holds. The solution $(u,v,y,z,w,\eta)\in D(\mathcal{A}_1)$ of \eqref{1POL-eq1}-\eqref{1POL-eq6} satisfies  the following estimation
\begin{equation}\label{Lemma4-EQ1}
\int_0^L\abs{\la y}^2dx=o(1).	
\end{equation}
\end{lemma}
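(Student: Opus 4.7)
\textbf{Proof plan for Lemma \ref{POLSTA6}.} The strategy is to derive an elliptic-type identity for $y$ alone by eliminating $z$ from \eqref{1POL-eq3}--\eqref{1POL-eq4}, then test against $\overline{y}$ and bound every resulting term using the lemmas already established.

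First, I would substitute $z = i\lambda y - \lambda^{-2} f^{3}$ (obtained from \eqref{1POL-eq3}) into \eqref{1POL-eq4}. This gives the scalar equation
\begin{equation*}
\mu \lambda^{2} y + \beta y_{xx} - \gamma \beta u_{xx} = -\mu \lambda^{-2} f^{4} - i\mu \lambda^{-1} f^{3}.
\end{equation*}
Multiplying by $\overline{y}$, integrating over $(0,L)$ and integrating by parts twice, the boundary contributions vanish: $y(0)=0$ kills the contribution at $x=0$, while $y_{x}(L)=0$ and $u_{x}(L)=0$ (both built into $D(\mathcal{A}_{1})$) kill the contributions at $x=L$. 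This yields
\begin{equation*}
\mu \int_{0}^{L} |\lambda y|^{2}\,dx = \beta \int_{0}^{L} |y_{x}|^{2}\,dx - \gamma\beta \int_{0}^{L} u_{x}\overline{y_{x}}\,dx - \int_{0}^{L}\bigl(\mu\lambda^{-2} f^{4}+i\mu\lambda^{-1}f^{3}\bigr)\overline{y}\,dx.
\end{equation*}

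Next I would estimate the three terms on the right-hand side. The first is $o(1)$ directly from Lemma \ref{poly-yx}. For the second, Cauchy--Schwarz together with Lemma \ref{1POLSTA3} and Lemma \ref{poly-yx} gives $\int_{0}^{L}|u_{x}||y_{x}|\,dx \leq \|u_{x}\|\,\|y_{x}\| = o(1)\cdot o(1) = o(1)$. For the last term, I would use the a priori bound $\|\lambda y\|_{L^{2}} \leq \|U\|_{\mathcal{H}} = 1$, so $\|y\|_{L^{2}} = O(|\lambda|^{-1})$; together with $\|f^{3}\|,\|f^{4}\| \leq \|F\|_{\mathcal{H}} = o(1)$ this yields a bound of order $o(|\lambda|^{-2})$ on the forcing term.

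Combining the three estimates, the right-hand side is $o(1)$, so $\mu \int_{0}^{L}|\lambda y|^{2}\,dx = o(1)$, which is the desired conclusion. There is no genuine obstacle here: once Lemmas \ref{0POLSTA3}--\ref{poly-yx} have produced the $o(1)$ control on $u_{x}$, $w$ and $y_{x}$, eliminating $z$ and testing against $\overline{y}$ is essentially mechanical. The only care needed is to track the powers of $\lambda$ carefully on the forcing side (hence the $\lambda^{-2} f^{4}$ and $\lambda^{-1} f^{3}$ scaling inherited from \eqref{Pol-Comp}) so that the two forcing integrals do not dominate.
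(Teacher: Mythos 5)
Your proposal is correct and follows essentially the same route as the paper: eliminate $z$ by inserting \eqref{1POL-eq3} into \eqref{1POL-eq4}, test the resulting equation against $\overline{y}$, integrate by parts using $y(0)=0$ and $u_x(L)=y_x(L)=0$, and absorb the three right-hand terms via Lemmas \ref{1POLSTA3} and \ref{poly-yx} together with the bounds $\|\la y\|\lesssim \|U\|_{\mathcal H}$ and $\|F\|_{\mathcal H}=o(1)$. Your sign bookkeeping in the tested identity (a $+\beta\int_0^L|y_x|^2\,dx$ term) is in fact the correct one, whereas the paper's display \eqref{Lemma4-EQ2} carries a harmless sign typo; either way all terms are $o(1)$ and the conclusion follows.
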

\begin{proof}	
Inserting 	\eqref{POL-eq3} in \eqref{POL-eq4}, we get 
\begin{equation*}
-\la^2\mu y-\beta y_{xx}+\gamma \beta u_{xx}=\mu\frac{f^4}{\la^2}+i\mu\frac{f^3}{\la}.	
\end{equation*}
Multiplying the above equation by $-\overline{y}$ integrating by parts over $(0,L)$, we get 
\begin{equation}\label{Lemma4-EQ2}
\mu\int_0^L\abs{\la y}^2dx=-\beta\int_0^L\abs{y_x}^2dx-\gamma\beta\int_0^Lu_x\overline{y_x}dx-\int_0^L\left(\frac{\mu f^4}{\la^2}+i\frac{\mu f^3}{\la}\right)\overline{y}dx.	
\end{equation}
Using \eqref{1poly-yx}, \eqref{Lemma4-EQPOL1}, $\la y$ is uniformly bounded in $L^2(0,L)$ and the fact that $\|F^n\|_{\mathcal{H}}=o(1)$, we get 
\begin{equation*}
\left|\int_0^Lu_x\overline{y_x}dx\right|=o(1)\quad \text{and}\quad 	\left|\int_0^L\left(\frac{\mu f^4}{\la^2}+i\frac{\mu f^3}{\la}\right)\overline{y}dx\right|=\frac{o(1)}{\la^2}.
\end{equation*}
Inserting the above estimations in \eqref{Lemma4-EQ2} and using \eqref{1poly-yx}, we get \eqref{Lemma4-EQ1}. The proof is thus completed. 
\end{proof}
\\

\noindent \textbf{Proof of Theorem \ref{POLY-STA-m=1}.}  Using Lemmas \ref{EST1-POL1}-\ref{POLSTA6}, we obtain that $\|U^n\|_{\mathcal{H}}=o(1)$, which contradicts $\|U^n\|_{\mathcal{H}}=1$. Thus,  \eqref{COND-POL2} holds true. The proof has been completed. \xqed{$\square$}
\section*{Conclusion}
\noindent In this work, we studied the  decay rate for one dimensional piezoelectric beams with magnetic effect and heat equation with memory, where the hereditary heat conduction is due to  Coleman-Gurtin law or Gurtin-Pipkin Law. The exponential stability is obtained when the hereditary heat conduction is of Coleman-Pikin type. Further, we show the polynomial stability of type $t^{-1}$ when the heat conduction law is of Gurtin-Pipkin law. The table below summarizes the results of the paper: 
$$
\begin{tabular}{|c|c|}
\hline
Systems & Stability\\
\hline
Piezoelectric with Coleman-Gurtin law \eqref{Pizo}& Exponential Stability\\
\hline 
Piezoelectric with Fourier law  \eqref{Pizo-F}& Exponential Stability\\
\hline 
Piezoelectric with Gurtin-Pipkin law \eqref{Pizo-GP}& Polynomial Stability of order $t^{-1}$\\
\hline
\end{tabular}
$$
We conjecture that the polynomial energy decay rate obtained in Theorem \ref{POLY-STA-m=1} is optimal. The idea of the proof is to find a sequence $(\lambda_n)_n\subset \mathbb{R}^{\ast}_+$ with $\abs{\la_n}\to +\infty$ and a sequence of vectors $(U_n)_n\subseteq D(\mathcal{A}_1)$  such that $(i\la_n I-\mathcal{A}_1)U_n=F_n$ is bounded in $\mathcal{H}$ and 
$$\lim_{n\to +\infty}\la_n^{-2+\varepsilon}\|U_n\|_{\mathcal{H}}=\infty.$$
 (See Theorem 3.1 in \cite{AIW-ACAP2021} and Theorem 5.1 in \cite{ABNWmemory}). Depending on  the boundary conditions, this approach and the construction of the vector $(U_n)$ is not feasible and the problem is still an open problem.  	 
\appendix
\section{Some notions and  stability theorems}
\noindent In order to make this paper more self-contained, we recall in this short appendix some notions and stability results used in this work. 
\begin{defi}\label{App-Definition-A.1}{\rm
Assume that $A$ is the generator of $C_0-$semigroup of contractions $\left(e^{tA}\right)_{t\geq0}$ on a Hilbert space $H$. The $C_0-$semigroup $\left(e^{tA}\right)_{t\geq0}$ is said to be 
\begin{enumerate}
\item[$(1)$] Strongly stable if 
$$
\lim_{t\to +\infty} \|e^{tA}x_0\|_H=0,\quad \forall\, x_0\in H.
$$
\item[$(2)$] Exponentially (or uniformly) stable if there exists two positive constants $M$ and $\varepsilon$ such that 
$$
\|e^{tA}x_0\|_{H}\leq Me^{-\varepsilon t}\|x_0\|_{H},\quad \forall\, t>0,\ \forall\, x_0\in H.
$$
\item[$(3)$] Polynomially stable if there exists two positive constants $C$ and $\alpha$ such that 
$$
\|e^{tA}x_0\|_{H}\leq Ct^{-\alpha}\|A x_0\|_{H},\quad \forall\, t>0,\ \forall\, x_0\in D(A).
$$
\xqed{$\square$}
\end{enumerate}}
\end{defi}
\noindent For proving the strong stability of the $C_0$-semigroup $\left(e^{tA}\right)_{t\geq0}$, we will recall the result obtained by Arendt and Batty in \cite{Arendt01}. 
\begin{Theorem}[Arendt and Batty in \cite{Arendt01}]\label{App-Theorem-A.2}{\rm
{Assume that $A$ is the generator of a C$_0-$semigroup of contractions $\left(e^{tA}\right)_{t\geq0}$  on a Hilbert space $H$. If $A$ has no pure imaginary eigenvalues and  $\sigma\left(A\right)\cap i\mathbb{R}$ is countable,
where $\sigma\left(A\right)$ denotes the spectrum of $A$, then the $C_0$-semigroup $\left(e^{tA}\right)_{t\geq0}$  is strongly stable.}\xqed{$\square$}}
\end{Theorem}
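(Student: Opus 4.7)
The plan is to exploit the Hilbert-space contraction structure to reduce the problem to showing that a certain ``unitary'' residual part of the semigroup must be trivial under the spectral hypotheses. The overall strategy splits naturally into constructing this residual part, analyzing its spectrum, and deriving a contradiction with the no-eigenvalue assumption.

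First, I would introduce the closed subspace $H_0 := \{x \in H : \|e^{tA}x\|_H \to 0 \text{ as } t\to\infty\}$. Because the semigroup consists of contractions, $t \mapsto \|e^{tA}x\|^2$ is non-increasing, so the limit always exists and $H_0$ is a well-defined closed $e^{tA}$-invariant subspace; it is continuous in $x$ because the $e^{tA}$ are uniformly bounded. Strong stability is precisely the statement $H_0 = H$, and I would argue by contradiction, assuming $H_u := H_0^\perp \neq \{0\}$.

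Second, on the quotient completion of $H$ modulo $H_0$ endowed with the asymptotic seminorm $x \mapsto \lim_t \|e^{tA}x\|$, the induced semigroup acts isometrically. By the Sz.-Nagy minimal unitary dilation theorem, this isometric $C_0$-semigroup extends to a unitary $C_0$-group $(U(t))_{t\in\mathbb{R}}$ on a larger Hilbert space $K$, with generator $B$ satisfying $\sigma(B)\subseteq i\mathbb{R}$. A careful tracking of approximate eigenvectors through the minimality of the dilation shows $\sigma(B)\subseteq \sigma(A)\cap i\mathbb{R}$, hence $\sigma(B)$ is countable. Then, by Stone's theorem, $B=i\int_{\mathbb{R}}\xi\,dE(\xi)$ with the spectral measure $E$ supported on a countable set, so $E$ is purely atomic and $B$ has pure point spectrum. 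Each eigenvector of $B$ at some $i\beta$ lifts back (through the dilation) to a nonzero eigenvector of $A$ at $i\beta$, contradicting the hypothesis that $A$ has no pure imaginary eigenvalues. Consequently $H_u=\{0\}$, i.e., $H_0=H$, and strong stability holds.

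The main obstacle will be making Step 2 precise: both the construction of the isometric extension on the quotient (one must check that strong continuity and the $C_0$-property survive completion) and, more delicately, the two-way spectral correspondence --- the inclusion $\sigma(B)\subseteq \sigma(A)\cap i\mathbb{R}$ and the lifting of genuine eigenvectors of $B$ back to eigenvectors of $A$ (not merely approximate ones). It is worth noting that the original Arendt--Batty proof sidesteps dilation theory entirely and instead argues via a Baire-category/harmonic-analysis argument on the commutative $C^*$-algebra generated by the resolvents of $A$, using the countability of $\sigma(A)\cap i\mathbb{R}$ to find isolated points and then ruling them out with the no-eigenvalue hypothesis; that route is more abstract but circumvents the eigenvector-lifting issue that dominates the dilation approach sketched above.
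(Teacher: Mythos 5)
The paper does not actually prove this statement: it is the Arendt--Batty theorem, quoted from \cite{Arendt01} as a ready-made tool, so there is no in-paper argument to compare yours against. Your sketch follows the Lyubich--V\~{u} route (isometric limit semigroup plus spectral theory of unitary groups) rather than Arendt and Batty's Tauberian/Laplace-transform argument, and much of it is sound: $H_0$ is a closed invariant subspace, the asymptotic seminorm $\ell(x)=\lim_{t\to\infty}\|e^{tA}x\|$ satisfies the parallelogram law so the completion $Y$ of $H/\ker\ell$ is again a Hilbert space carrying an isometric $C_0$-semigroup, and a skew-adjoint operator with countable spectrum does have purely atomic spectral measure, hence an eigenvector whenever the space is nonzero.

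The genuine gap is your final step, that ``each eigenvector of $B$ lifts back through the dilation to a nonzero eigenvector of $A$.'' There is no bounded intertwining map from $K$ (or from $Y$) back into $H$; the canonical map goes the other way, $\pi\colon H\to Y$. What a unit eigenvector $y\in Y$ with $V(t)y=e^{i\beta t}y$ actually produces is the functional $x\mapsto\langle\pi x,y\rangle_{Y}$ on $H$, i.e.\ an eigenvector of the \emph{adjoint} $A^{*}$ with purely imaginary eigenvalue. In a general Banach space the argument stops there, which is precisely why the ABLV theorem is normally stated with the hypothesis $\sigma_{p}(A^{*})\cap i\mathbb{R}=\emptyset$. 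To obtain the theorem as stated you must insert the Hilbert-space contraction argument: if $A^{*}x=i\beta x$ with $\|x\|=1$, then $\langle e^{tA}x,x\rangle=\langle x,e^{tA^{*}}x\rangle^{-}=e^{-i\beta t}$, so $1=|\langle e^{tA}x,x\rangle|\le\|e^{tA}x\|\,\|x\|\le 1$ forces equality in Cauchy--Schwarz, whence $e^{tA}x=e^{-i\beta t}x$ and $Ax=-i\beta x$, contradicting the eigenvalue hypothesis on $A$. Without this conversion the no-eigenvalue hypothesis on $A$ is never actually used against the object you construct. A secondary remark: the dilation step, and the delicate inclusion $\sigma(B)\subseteq\sigma(A)\cap i\mathbb{R}$ that you flag as the main obstacle, can be avoided entirely, since $\sigma(A_{Y})\subseteq\sigma(A)\cap i\mathbb{R}$ is countable and therefore omits points of $i\mathbb{R}$, and an isometric $C_0$-semigroup whose generator's spectrum is not the whole closed left half-plane automatically extends to an isometric (hence, on a Hilbert space, unitary) group on $Y$ itself.
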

\noindent There exist a second classical method based on Arendt and Batty theorem and the contradiction argument  (see  page 25 in \cite{LiuZheng01}).
\begin{Remark}\label{App-Lemma-A.3}{\rm
 Assume that the unbounded linear operator $A:D(A)\subset H \longmapsto H$ is the generator of a C$_0-$semigroup of contractions $\left(e^{tA}\right)_{t\geq0}$  on a Hilbert space $H$ and suppose that $0\in \rho(A).$ According  to  (page 25 in \cite{LiuZheng01}, see also \cite{ABW-CPAA2021}), in order to prove that \begin{equation}\label{A1}
 \displaystyle	i\R \equiv \left\{ i\la\  | \ \ \la\in\R \right\} \subseteq \rho(A),
\end{equation} we need the following steps:
\begin{enumerate}
\item[(i)] It follows from the fact that $0\in \rho (A)$ and the contraction mapping theorem that for any real number $\la$ with $|\la|<\|A^{-1} \|^{-1} $, the operator $i\la I-A =A(i\la A^{-1} -I)$ is invertible. Furthermore, $\|(i\la I-A)^{-1} \|$ is a continuous function of $\la $ in the interval $\left(-\|A^{-1} \|^{-1} ,\|A^{-1} \|^{-1} \right) $.\\
\item[(ii)] If $\sup \left\{\|(i\la I-A)^{-1}\|\ | \ |\la|<\|A^{-1}\|^{-1} \right\}=M < \infty$, then by the contraction mapping theorem, the operator $i\la I-A =(i\la_0 I-A)(I+i(\la-\la_0 )(i\la_0 I-A)^{-1} )$ with $|\la_0 |<\|A^{-1}\|^{-1}$ is invertible for $|\la -\la_0 | <M^{-1}$. It turns out that by choosing $|\la_0 | $ as close to $\|A^{-1}\|^{-1}$ as we can, we conclude that $\left\{ \la \ | \ |\la|<\|A^{-1}\|^{-1}+M^{-1}\right \} \subset \rho (A)$ and $\|(i\la I -A)^{-1}\|$ is a continuous function of $\la $  in the interval $\left(-\|A^{-1}\|^{-1}-M^{-1},\|A^{-1}\|^{-1}+M^{-1} \right).$\\ 
\item[(iii)] Thus it follows from the argument in (ii) that if \eqref{A1} is false, then there is $\omega \in\R $ with $\|A^{-1}\|^{-1}\leq |\omega| <\infty $ such that $\left\{i\la \ | \ |\la |<|\omega| \right\} \subset \rho(A)  $ and $\sup \left\{\|(i\la -A)^{-1}\| \ | \ |\la |<|\omega | \right\}=\infty$. It turns out that there exists a sequence $\left\{(\lambda_n,{U}_n)\right\}_{n\geq 1}\subset \mathbb{R}\times D\left(A\right),$ with $\lambda_n \to  \omega$ as $n\to\infty,$ $|\lambda_n|<|\omega|$ and $\left\|{U}_n\right\|_{H} = 1$, such that
\begin{equation*}
(i \lambda_n I - A){U}_n ={F}_n \to  0 \ \textrm{in} \ {H},\qquad\text{as }n\to\infty.
\end{equation*}
Then, we will prove \eqref{A1}  by finding a contradiction with $\left\|{U}_n\right\|_{H} = 1$ such as $\left\| {U}_n\right\|_{H}  \to 0 .$\xqed{$\square$}.\\ \end{enumerate}}
\end{Remark}

\noindent We now recall the following standard result which is stated in a comparable way \cite{Huang01, pruss01}   for part (1) and   \cite{Borichev01} (see also \cite{Batty01}, \cite{RaoLiu01} and \cite{Rozendaal2017}) for part (2) .
\begin{Theorem}\label{bt}{\rm
	Assume that $A$ is the generator of a strongly continuous semigroup of contractions $\left(e^{tA}\right)_{t\geq0}$  on $H$.   If   $ i\mathbb{R}\subset \rho(A)$. Then;\\
\begin{enumerate}
\item[(1)] The semigroup $e^{tA}$ is exponentially stable if and only if 
$$
\limsup_{\la\in \mathbb{R},\la\to \infty}\|(i\la I-A)^{-1}\|<\infty.
$$
\item[(2)]	The semigroup $e^{tA}$ is polynomially stable of order $\ell>0$ if and only if 
$$
\limsup_{\la\in \mathbb{R},\la\to \infty}\abs{\la}^{-\frac{1}{\ell}}\|(i\la I-A)^{-1}\|<\infty.
$$
\end{enumerate}

}
\end{Theorem}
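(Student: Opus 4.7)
\textbf{Proof proposal for Theorem \ref{bt}.}

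The plan is to treat the two parts separately, with the common reduction that $i\mathbb{R}\subset \rho(A)$ guarantees the resolvent $R(is,A) = (isI-A)^{-1}$ is a well-defined bounded operator for every $s\in\mathbb{R}$, varying continuously in $s$. For necessity in both parts, the strategy is direct: if $\|e^{tA}\|\leq Me^{-\varepsilon t}$ (resp. $\|e^{tA}A^{-1}\|\leq Ct^{-\ell}$ on $D(A)$), I would represent the resolvent through the Laplace transform $R(is,A)x = \int_0^\infty e^{-ist}e^{tA}x\,dt$ (valid once the spectrum is pushed into the open left half-plane), differentiate/integrate by parts to get the desired growth of $\|R(is,A)\|$ in $|s|$, and take the supremum over $s\in\mathbb{R}$.

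For sufficiency of part (1), the classical Gearhart--Pr\"uss argument applies: since we work in a Hilbert space $H$, the key ingredient is Plancherel's theorem. Fix $x\in H$ and extend $u(t)=e^{tA}x$ by zero to $t<0$. The Fourier transform satisfies $\hat u(s) = R(is,A)x$ (after justifying analytic continuation from $\mathrm{Re}\,\lambda>0$ to the imaginary axis, which uses $i\mathbb{R}\subset\rho(A)$ together with contractivity to control boundary behavior). Plancherel then gives
\[
\int_0^\infty \|e^{tA}x\|^2\,dt = \frac{1}{2\pi}\int_{-\infty}^\infty \|R(is,A)x\|^2\,ds \leq \frac{M^2}{2\pi}\,\|x\|^2,
\]
where $M$ is the supremum in the statement. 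A similar bound holds for $A^*$. By the Datko--Pazy criterion, square-integrability of every orbit forces exponential stability.

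For sufficiency of part (2), I would invoke the Borichev--Tomilov framework. The data is $\|R(is,A)\|\lesssim (1+|s|)^{\ell}$ on $i\mathbb{R}$. The goal is $\|e^{tA}A^{-1}\|=O(t^{-1/\ell})$. The route is to express $e^{tA}A^{-1}x$ through an inverse Fourier/contour integral involving $R(is,A)A^{-1}$, then split the integral at a scale $|s|\sim t^{1/\ell}$: on the low-frequency part one uses smoothness of the resolvent near $0$ (guaranteed by $0\in\rho(A)$), while on the high-frequency tail one integrates by parts, trading one factor of $A^{-1}$ for a factor of $(is)^{-1}$ and using the polynomial resolvent bound to get summable decay. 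Balancing the two contributions yields the rate $t^{-1/\ell}$ on $D(A)$, and extension to general $x_0\in H$ follows from density together with the contraction property.

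The main obstacle is sufficiency in part (2): the Fourier-analytic balance between the polynomial resolvent growth and the polynomial decay rate is delicate, and one needs either the multiplier-theorem approach of Borichev--Tomilov or the Batty--Duyckaerts contour shift to rigorously justify the interchange of limits and the sharp exponent $1/\ell$. Sufficiency in part (1) is also non-trivial (it rests on Plancherel and hence is genuinely Hilbert-space specific), but its proof is now textbook-standard and shorter than the polynomial case.
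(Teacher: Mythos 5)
The paper does not actually prove Theorem \ref{bt}: it records it as a standard result, citing Huang and Pr\"uss for part (1) and Borichev--Tomilov (together with Batty--Duyckaerts, Liu--Rao and Rozendaal--Seifert--Stahn) for part (2), so there is no in-paper argument to compare against. Your outline of part (1) is a faithful reconstruction of the cited proofs: the Laplace-transform computation for necessity and the Plancherel--Datko route for sufficiency are exactly the Gearhart--Pr\"uss--Huang argument, and the step you leave implicit (uniform boundedness of the resolvent on the closed right half-plane, obtained from the Hille--Yosida bound $\|R(\lambda,A)\|\le (\Re\lambda)^{-1}$ away from the axis and a Neumann-series perturbation of $R(is,A)$ near it) is routine to fill in.

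Part (2) of your sketch has two concrete problems. First, the exponents are inverted relative to the statement: the hypothesis is $\limsup_{|\lambda|\to\infty}|\lambda|^{-1/\ell}\|(i\lambda I-A)^{-1}\|<\infty$, i.e. $\|R(is,A)\|=O(|s|^{1/\ell})$, and ``polynomially stable of order $\ell$'' means $\|e^{tA}A^{-1}\|=O(t^{-\ell})$ --- this is exactly how the paper uses the theorem, since resolvent growth $O(|\lambda|^{2})$ (condition \eqref{COND-POL2}) is converted into energy decay $t^{-1}$, i.e. $\ell=1/2$. You instead take resolvent growth $O(|s|^{\ell})$ and target $t^{-1/\ell}$; the pairing is internally consistent with Borichev--Tomilov under the substitution $\ell\mapsto 1/\ell$, but as written it does not prove the theorem as stated. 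Second, and more substantively, the frequency-splitting argument you describe (split the inverse Fourier integral at $|s|\sim t^{1/\ell}$, integrate by parts on the tail, balance the two contributions) is the Batty--Duyckaerts contour scheme, and on its own it yields only the rate with a logarithmic loss, $\|e^{tA}A^{-1}\|=O\bigl((t/\log t)^{-\ell}\bigr)$, which is the best available on general Banach spaces. Removing the logarithm is the actual content of Borichev--Tomilov and requires the Hilbert-space quadratic (Plancherel-type) estimates for the resolvent along vertical lines; it is not a matter of ``rigorously justifying the interchange of limits.'' As it stands, your part (2) is either circular (it invokes the very theorem being proved) or, if carried out as literally described, establishes a strictly weaker decay rate.
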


\end{document}